\numberwithin{equation}{section}
\newtheorem{theorem}{Theorem}[section]
\newtheorem{lemma}{Lemma}[section]
\newtheorem{remark}{Remark}[section]
\def\div{\textnormal{div}}
\def\bV{\mathbf{V}}
\def\bv{\mathbf{v}}
\def\bu{\mathbf{u}}
\def\bL{\mathbf{L}}
\def\bn{\mathbf{n}}
\def\bq{\mathbf{q}}
\def\br{\mathbf{r}}
\def\bQ{\boldsymbol{Q}}
\def\bPi{\boldsymbol{\Pi}}
\def\bsi{\boldsymbol{\sigma}}
\def\btau{\boldsymbol{\tau}}
\def\bxi{\boldsymbol{\xi}}
\def\<{\langle}
\def\>{\rangle}
\def\esi{e_{\bsi}}
\def\etu{e_{\tilde\bu}}
\def\eu{e_{\bu}}
\def\eq{e_{\bq}}
\newcommand{\e}[1]{\cdot 10^{#1}}
\newcommand{\eg}{e.g., }
\newcommand{\ie}{i.e., }
\newcommand{\REV}{\textit{REV}}
\newcommand{\spd}{\textit{spd}}
\newcommand{\st}{s.t.}
\newcommand{\PP}{\textit{PP}}
\newcommand{\PjP}{\textit{PjP}}
\newcommand{\DOFs}{\textit{DOFs}}
\newcommand{\nuu}{$\frac{\textnormal{m}^2}{s}$}
\newcommand{\msq}{$\textnormal{m}^2$}
\begin{document}

\begin{frontmatter}



\title{Numerical solution of the unsteady Brinkman equations in the framework of $H$(div)-conforming finite element methods}

\author[label1]{Costanza Aricò\corref{cor1}}

\ead{costanza.arico@unipa.it}
\cortext[cor1]{Corresponding Author}
\affiliation[label1]{organization={Department of Engineering, University of Palermo},
	addressline={viale delle Scienze},
	city={Palermo},
	postcode={90128},
	country={Italy}}

\author[label2]{Rainer Helmig}
\affiliation[label2]{organization={Institute for Modelling Hydraulic and Environmental Systems (IWS),
		Department of Hydromechanics and Modelling of Hydrosystems, University of Stuttgart},
	addressline={Pfaffenwaldring 61},
	city={Stuttgart},
	postcode={D-70569},
	country={Germany}}

\author[label3]{Ivan Yotov}
\affiliation[label3]{organization={Department of Mathematics,
		University of Pittsburgh},
	addressline={301 Thackeray Hall},
	city={Pittsburgh},
	postcode={PA 15260},
	country={USA}}




\begin{abstract}
We present projection-based mixed finite element methods for the solution of the unsteady Brinkman equations for incompressible single-phase flow with fixed in space porous solid inclusions. At each time step the method requires the solution of a predictor and a projection problem. The predictor problem, which uses a stress-velocity mixed formulation, accounts for the momentum balance, while the projection problem, which is based on a velocity-pressure mixed formulation, accounts for the incompressibility. The spatial discretization is $H$(div)-conforming and the velocity computed at the end of each time step is pointwise divergence-free. Unconditional stability of the fully-discrete scheme and first order in time accuracy are established. Due to the $H$(div)-conformity of the formulation, the methods are robust in both the Stokes and the Darcy regimes. In the specific code implementation, we discretize the computational domain using the Raviart--Thomas space \(RT_1\) in two and three dimensions, applying a second-order accurate multipoint flux mixed finite element scheme with a quadrature rule that samples the flux degrees of freedom. In the predictor problem this allows for a local elimination of the viscous stress and results in element-based symmetric and positive definite systems for each velocity component with \(\left(d+1\right)\) degrees of freedom per simplex (where \(d\) is the dimension of the problem). In a similar way, we locally eliminate the corrected velocity in the projection problem and solve an element-based system for the pressure. Numerical experiments are presented to verify the convergence of the proposed scheme and illustrate its performance for several challenging applications, including one-domain modeling of coupled free fluid and porous media flows and heterogeneous porous media with strong discontinuity of the porosity and permeability values.
\end{abstract}


\begin{keyword}
Brinkman equations \sep projection scheme \sep mixed finite elements \sep multipoint flux approximation \sep free fluid - porous media coupling \sep quadrature rule


\end{keyword}

\end{frontmatter}


\section{Introduction}

The Brinkman equations \cite{brinkman1949calculation} can be considered as a combination of the Stokes and Darcy models, with a viscous term added to Darcy's law. The model has important industrial, biomedical, and environmental applications, when the porous medium has heterogeneous porosity/permeability values, in such a way that in some portions of the domain the flow is governed by Darcy's law and elsewhere by the Stokes equations, \eg flow around or within foams, fuel cells, heat exchangers, drying and filtration processes, flows in biological tissues, and groundwater flow in fractured porous media. The Brinkman equations are derived from the volume averaging of the pore-scale Stokes equations at the continuum representative elementary volume (\REV) scale \cite{CHANDESRIS20062137, LeBarsWorster2006}. Pore-scale simulations are accurate, but computationally expensive due to the high number of grid elements needed to discretize the void spaces in the porous medium. Additionally, a detailed knowledge of the solid inclusion geometry is often missing. For these reasons continuum \REV\ scale approaches have become popular. The Brinkman model includes porosity and permeability parameters, which can vary spatially. Thus, the model allows for flow in either the Stokes or the Darcy regimes in different regions, using the so called ``\textit{one-domain} approach'' (ODA) with a single ``fictitious'' medium and one set of governing equations. The Brinkman equations represent an alternative way to model the coupling of Stokes and Darcy equations through interface conditions \cite{LSY, Riv-Yot, Vas-Yot, Lipnikov2014, KANSCHAT20105933, CAMANO2015362, Boon2024, gos2011}.\par

One of the most challenging aspects of the numerical approximation of the Brinkman equations is the construction of stable and robust methods in both the Stokes and the Darcy regimes. Standard inf-sup stable velocity-pressure mixed finite element (MFE) methods for the Stokes equations, such as the MINI and the Taylor--Hood elements, provide suboptimal convergence orders in the Darcy regime \cite{MarTaiWin}. On the other hand, stable $H$(div)-conforming mixed Darcy elements, like the Raviart--Thomas or the Brezzi--Douglas--Marini elements, do not satisfy the required \(H^1\)-conformity in the Stokes regime. Various approaches have been developed, including enhancing stable Darcy elements to enforce some tangential continuity \cite{MarTaiWin,GuzmNeil-Brinkman,XieXuXue}, penalized $H$(div)-conforming methods \cite{Konno-Stenberg, Konno2012-lt, KANSCHAT20105933},
stabilized methods based on Stokes elements \cite{Burman-Brinkman,Burman-SD,Badia-Codina,Nafa,Masud,JunSten-Brinkman,CORREA20092710,Arbogast-SD}, and weak Galerkin methods \cite{Mu-Hdiv-WG, Mu-Wang-Ye}. 

The above mentioned methods are based on the standard velocity-pressure formulation, where the velocity is approximated in the (possibly broken) $H^1$-norm, which makes it difficult to obtain robustness in both the Stokes and the Darcy regimes.
An alternative approach is to consider dual mixed formulations, where the stress or the pseudostress is introduced as a primary variable and the velocity is approximated in the $L^2$-norm \cite{Qian, Gatica-pseudostress-Brinkman, BustGatGonz, Howel-dual-mixed-Brinkman, Zhao-Brinkman, Caucao-Yotov, BF-three-field}. In these schemes, the mass conservation is only weakly imposed and the approximation of the velocity is discontinuous. Vorticity-based mixed formulations with $H$(div)-conforming velocity are developed in \cite{Vassilevski, Anaya}, while an augmented vorticity-based mixed method with $H^1$-conforming velocity is proposed in \cite{BF-vorticity}. 

In this paper we develop a family of new $H$(div)-conforming projection-based MFE methods for the unsteady Brinkman equations, which provide pointwise divergence-free velocity. Moreover, due to the $H$(div)-conformity of the formulation, the methods
are robust in both the Stokes and the Darcy regimes. Extending the methodology proposed in \cite{ARICO2025117616} for the unsteady Stokes equations (see also \cite{Arico-NS-RT0, Arico-ODA}), we apply an incremental pressure correction method in the framework of projection schemes \cite{BROWN2001464, GUERMOND2006}. At each time step we solve a predictor problem for the viscous term, followed by a projection problem to incorporate the incompressibility constraint. The predictor problem uses a mixed stress-velocity formulation \cite{Cai-MFE-NS,gos2011}, whereas the projection problem is based on a mixed velocity-pressure formulation. The stable MFE spaces Brezzi--Douglas--Marini (\(BDM_k\)), \(k \ge 1\), or Raviart--Thomas (\(RT_k\)), \(k \ge 0\), on simplicial grids \cite{BBF} are considered for the components of the viscous stress in the predictor problem and the velocity in the projection problem. The velocity at the end of each time iteration is $H$(div)-conforming and divergence-free polynomial of degree \(\le k\). We establish unconditional stability of the fully discrete method, obtaining bounds on the viscous stress and the velocity that are robust in both the Stokes and Darcy limits. We then derive a first-order in time error bound for the semi-discrete continuous-in-space scheme.

A specific second-order in space method based on the \(RT_1\) space and discontinuous piecewise linear polynomials on unstructured triangular or tetrahedral grids is developed in the second part of the paper. We avoid the solution of saddle point problems by employing the multipoint flux mixed finite element (MFMFE) method, which was originally proposed for Darcy flow as a first-order method in \cite{W-Y} using the \(BDM_1\) space and extended to a second-order method based on the $RT_1$ space in \cite{Radu}, see also \cite{Caucao2020AMS} for the multipoint stress mixed finite element for the Stokes equations. A quadrature rule that samples the flux degrees of freedom allows for mass lumping and local elimination of the stress in the predictor problem and the velocity in the projection problem. As a result, at each time step only symmetric and positive definite algebraic systems need to be solved for the velocity components and the pressure in the predictor and projection problems, respectively. This makes the numerical algorithm very efficient in terms of computational effort.\par

The structure of the paper is as follows. The governing equations are presented in \cref{ge}. The numerical algorithms with discretization in space and time and the MFE projection methods are presented in \cref{numalg}. In \cref{stab_err_an} we develop the stability and the time discretization error analysis. In \cref{implem} we present the implementation of a second order MFMFE scheme on unstructured triangular or tetrahedral grids and in \cref{tests} we show four numerical applications of the MFMFE scheme. In the first test we verify the second and first order convergence in space and time, respectively. In tests two and three we model coupled free fluid and porous media flows and compare our results to numerical solutions available in the literature. We also consider heterogeneous porous media with strong contrasts of porosity and permeability values The last test is a challenging biomedical application for a very irregular geometry domain. Finally, we come to the conclusions in \cref{conclusion}.
\section{Governing equations} \label{ge}

We consider a single-phase, incompressible and Newtonian fluid within and around a porous medium with rigid and fixed in space solid inclusions. Let \(\Omega \subset \mathbf{R}^d \), \(d = 2, 3\), and \(\mathcal{T}\) be the computational domain and the final simulation time, respectively. The Brinkman equations are
\begin{subequations}  \label{eq:governing_Eqq}
	\begin{gather}
		\nabla\ \cdot \mathbf{u} = 0 \quad \textnormal{in} \quad \Omega \times \left(0, \mathcal{T}\right],  \label{eq:continuity}\\
		\frac{1}{\phi} \frac{\partial \mathbf{u}}{\partial t} + \nabla \Psi -  \nabla \cdot \left(\frac{\nu}{\phi} \nabla \mathbf{u}\right) + \nu \mathfrak{K} \mathbf{u} = 0 \quad \textnormal{in} \quad \Omega \times \left(0, \mathcal{T}\right], \label{eq:momentum}
	\end{gather}  
\end{subequations}
where \(\mathbf{u}\) is the surface average fluid velocity vector (or Darcy velocity), \(\Psi = \frac{p}{\rho}\) is the intrinsic average kinematic pressure (with \(p\) the intrinsic average fluid pressure), \(\nu = \frac{\mu}{\rho} > 0\) is the kinematic fluid viscosity (with \(\rho\) and \(\mu\) the fluid density and the dynamic fluid viscosity), \(\phi\) is the porosity of the fictitious medium and \(\mathfrak{K}\) is the inverse of the symmetric and positive definite (\spd) permeability tensor \(\mathbf{K}\). We assume that \(\phi\) and \(\mathbf{K}\) vary in space. For simplicity we assume in the analysis that $\phi$ is constant on each element of the mesh. The last term on the l.h.s. of \cref{eq:momentum} represents the drag force due to the pore scale momentum transfer of the fluid and solid phases in the porous domain \cite{CHANDESRIS20062137, OCHOATAPIA19952635,osti_6416113}. Some basic concepts of the volume averaging techniques that can be used to derive \eqref{eq:momentum} can be found in \cite{GRAY1975229,osti_6416113}. \par

We note that it is possible to insert or not the ratio \(\frac{\nu}{\phi}\) within the divergence operator in \eqref{eq:momentum}. Both versions have been proposed in the literature \cite{CHANDESRIS20062137,OCHOATAPIA19952635,GOYEAU20034071}. As discussed after Eq. (65) in \cite{CHANDESRIS20062137}, this is a modeling choice that leads to different stress continuity conditions in the case of a sharp interface, either in terms of the ``effective'' viscous stress \(-\frac{\nu}{\phi} \nabla \mathbf{u}\) or the ``standard'' viscous stress \(-\nu \nabla \mathbf{u}\).
With our choice we obtain a symmetric algebraic system, while keeping \(\frac{\nu}{\phi}\) outside the divergence operator results in a non-symmetric system, see Remarks \ref{choice} and \ref{sym-system}. 

For the problem in \eqref{eq:governing_Eqq} be well posed, Boundary Conditions (BCs) and Initial Conditions (ICs) need to be properly assigned. \(\Gamma=\Gamma_d\cup\Gamma_n\) is the boundary of \(\Omega\) and \(\mathbf{n}\) its unitary orthogonal vector, outward oriented. We assign Dirichlet BCs for the velocity and the normal stress component on \(\Gamma_d\) and \(\Gamma_n\), respectively. The ICs and BCs needed for the solution of system \eqref{eq:governing_Eqq} are 
\begin{subequations} \label{eq:IBCs}
	\begin{gather}
		\mathbf{u}=\mathbf{u}_b, \qquad \textnormal{on} \quad \Gamma_d, \quad t \in \left[0, \mathcal{T}\right], \label{eq:IBCs1}\\
		\boldsymbol{\Sigma}_b = (\Psi \mathbf{I} - \frac{\nu}{\phi} \nabla \mathbf{u}) \mathbf{n},  \qquad \textnormal{on} \quad \Gamma_n, \quad t \in \left[0, \mathcal{T}\right], \label{eq:IBCs2} \\
		\mathbf{u}=\mathbf{u}_0 \quad \textrm{with} \quad \nabla \cdot \mathbf{u}_0=0, \qquad \Psi = \Psi_0, \quad \textnormal{in} \quad \Omega, \quad t= 0,
	\end{gather}
\end{subequations} 
\noindent where \(\mathbf{u}_b\) is the velocity vector imposed at the boundary and \(\boldsymbol{\Sigma}_b\) is the total stress imposed at the boundary along the direction orthogonal to \(\Gamma_n\), \(\mathbf{I}\) is the identity matrix and \(\mathbf{u}_0\) and \(\Psi_0\) are the initial value of \(\mathbf{u}\) and \(\Psi\) in \(\Omega\), respectively. \par	

We will use the following standard notation. For a domain \(D \subset \mathbf{R}^d\), the \(L^2(D)\)-inner product and the $L^2(D)$-norm for scalar, vector, and tensor valued functions are denoted by \(\left(\cdot,\cdot\right)_D\) and $\|\cdot\|_D$, respectively. The subscript \textit{D} will be omitted if \(D = \Omega\). For a section of the boundary $S \in \mathbf{R}^{d-1}$, $\< \cdot,\cdot \>_S$ marks the $L^2(S)$ inner product (or duality pairing). We will also use the space  
\begin{equation*}
	H(\div,\Omega) = \left\{\mathbf{v}\in (L_2\left(\Omega\right))^d: \ \nabla \cdot \mathbf{v} \in L_2\left(\Omega\right) \right\},
\end{equation*}
and we define
\begin{equation*}
	\mathbf{V} = H(\div,\Omega), \quad \mathbf{V}_{0,\Gamma_s} = \left\{\mathbf{v} \in \mathbf{V} : \mathbf{v}\cdot \mathbf{n} = 0 \ \textnormal{on} \ \Gamma_s\right\}, \ s \ \textnormal{is } d \ \textnormal{or } n, \quad W = L_2(\Omega).
\end{equation*}  

\section{Numerical algorithm} \label{numalg}
We apply the numerical procedure proposed in \cite{ARICO2025117616} for the unsteady Stokes equations, here specifically modified for the solution of the Brinkman equations. 

\subsection{Discretization in time and space} \label{discr}

The simulation time \(\mathcal{T}\) is subdivided into \(N\) intervals with uniform size $\Delta t = \frac{\mathcal{T}}{N}$ and vertices \(t^n = n \Delta t\), $n = 0, \ldots, N$. Let $\varphi^n$ denote the value a variable $\varphi$ at time $t^n$. System \eqref{eq:governing_Eqq} is solved by an incremental pressure correction scheme (\eg \cite{GUERMOND2006}), by solving sequentially a predictor and a projection problem at each time step. Introducing the notation
\begin{equation} \label{tensor_L}
	\mathbf{L} = \nu \mathfrak{K},
\end{equation}
the discretized-in-time form of \eqref{eq:momentum} is
\begin{subequations} \label{FTS}
	\begin{gather}
		\frac{1}{\phi}\frac{\tilde{\mathbf{u}}^{n+1} - \mathbf{u}^n}{\Delta t} + \nabla \Psi^n - \nabla \cdot \left(\frac{\nu}{\phi} \nabla \tilde{\mathbf{u}}^{n+1} \right) + \mathbf{L} \tilde{\mathbf{u}}^{n+1} = 0 ,  \label{eq:Ps} \\
		\frac{1}{\phi} \frac{\mathbf{u}^{n+1} - \tilde{\mathbf{u}}^{n+1}}{\Delta t} + \nabla \left(\Psi^{n+1} - \Psi^n\right) + \mathbf{L} \left(\mathbf{u}^{n+1} - \tilde{\mathbf{u}}^{n+1} \right) = 0, \label{eq:Cs}
	\end{gather}
\end{subequations}
where \eqref{eq:Ps} is the predictor problem (\PP) and \eqref{eq:Cs} is the projection problem (\PjP). For the mixed formulation of the problem, we introduce the variables
\begin{equation} \label{eq:notations}
	\boldsymbol{\sigma}= -\frac{\nu}{\phi} \nabla \mathbf{u}, \quad \qquad \mathbf{q}= \nabla \Psi,
\end{equation}
\noindent where \(\boldsymbol{\sigma}\) is the $\left(d \times d\right)$ viscous pseudostress tensor. Thanks to \eqref{eq:notations}, \eqref{eq:Ps} becomes
\begin{equation} \label{eq:Ps2}
	\frac{1}{\phi} \frac{\tilde{\mathbf{u}}^{n+1} - \mathbf{u}^n}{\Delta t} + \mathbf{q}^n + \nabla \cdot \boldsymbol{\sigma}^{n+1} + \mathbf{L} \tilde{\mathbf{u}}^{n+1} = 0.  
\end{equation}

The spatial discretization of \(\Omega\) is based on a geometrically conforming grid \(T_h\) made of $N_T$ non-overlapping simplices \(E\) (triangles if \(d = 2\) or tetrahedra if \(d = 3\)). Two neighboring simplices share a common face \(e\). We denote by \(\mathfrak{S}_T\) the total number of faces in \(T_h\).
We will use either the Brezzi--Douglas--Marini (BDM) or the Raviart--Thomas (RT) pairs of MFE spaces on \(T_h\), \(\mathbf{V}_h \times W_h \subset \bV\times W\) \cite{BBF}, which satisfy
\begin{equation}\label{div-Vh}
	\nabla\cdot \bV_h = W_h.
\end{equation}
We will also utilize the tensor-valued space $(\bV_h)^d$ with each row in $\bV_h$. Let $Q_h:L^2(\Omega) \to W_h$ be the $L^2$-orthogonal projection operator onto $W_h$, such that for any $w \in L^2(\Omega)$,
\begin{align}
	(Q_h w - w, w_h) = 0 \ \ \forall w_h \in W_h. \label{Qh-defn}
\end{align}
By $Q_h^\Gamma:L^2(\Gamma) \to \bV_h\cdot\bn$ and $\bQ_h^\Gamma:(L^2(\Gamma))^d \to (\bV_h)^d\,\bn$ we denote the $L^2$-orthogonal projection operators onto $\bV_h\cdot\bn$ and $(\bV_h)^d\,\bn$, respectively, such that for any 
$\varphi \in L^2(\Gamma)$ and any $\bv \in (L^2(\Gamma))^d$,
\begin{align*}
	\<\varphi - Q_h^\Gamma\varphi,\bv_h\cdot\bn\>_{\Gamma} = 0 \ \ \forall \bv_h \in \bV_h, \quad
	\<\bv - \bQ_h^\Gamma\bv,\btau_h \,\bn\> _{\Gamma} = 0 \ \ \forall \btau_h \in (\bV_h)^d.
\end{align*}
The mixed interpolant $\bPi_h:(H^1(\Omega))^d \to \bV_h$ \cite{BBF} is also used, which satisfies for any $\bv \in (H^1(\Omega))^d$,
\begin{align}
	&  (\nabla\cdot(\bPi_h \bv - \bv),w_h) = 0 \ \ \forall w_h \in W_h, \label{Pi-div}\\
	& \<(\bPi_h \bv - \bv)\cdot\bn,\bv_h\cdot\bn\>_{\Gamma} = 0 \ \ \forall \bv_h \in \bV_h. \label{Pi-n}
\end{align}
The vector variant of $Q_h$, $Q_h^d: (L^2(\Omega))^d \to (W_h)^d$ will also be used.

\subsection{The mixed finite element projection method} \label{MFEPjM}

We initialize the solution at $t = 0$ as: $\bu_h^0 = \bPi_h \bu_0$, $\Psi_h^0 = Q_h \Psi_0$, $\bq_h^0 = Q_h^d \,\nabla \Psi_0$, $\Psi^0_{b} = \Psi^0|_{\Gamma}$. The numerical method is as follows.\par 
Time loop : do \(n = 0,\ldots, N-1\):

\begin{itemize}	
	\item MFE discretization of the \PP\ \eqref{eq:Ps2}:\\
	Find \(\bsi_h^{n+1} \in (\mathbf{V}_h)^d  : \bsi_h^{n+1} \mathbf{n} = \bQ_h^\Gamma(\mathbf{\Sigma}_b^{n+1} - \Psi_b^n \mathbf{n})\) on \(\Gamma_n\) and \(\tilde{\mathbf{u}}_h^{n+1} \in (W_h)^d\), \st
	\begin{subequations} \label{eq:Predproblem}
		\begin{align}
			& \hskip - .1cm \left( \frac{\phi}{\nu}  \bsi_h^{n+1}, \boldsymbol{\tau}_h \right)
			- \left( \tilde{\mathbf{u}}_h^{n+1}, \nabla \cdot \boldsymbol{\tau}_h \right) =
			- \langle \mathbf{u}_b^{n+1}, \boldsymbol{\tau}_h \, \mathbf{n} \rangle _{\Gamma_d} \quad
			\forall \boldsymbol{\tau}_h \in (\mathbf{V}_{h,0,\Gamma_n})^d, \label{eq:PP1} \\
			&\hskip - .1cm \left( \frac{1}{\phi}\frac{\tilde{\mathbf{u}}_h^{n+1} - \mathbf{u}_h^n}{\Delta t}, \boldsymbol{\xi_h} \right)
			+ \left( \nabla \cdot \bsi_h^{n+1}, \boldsymbol{\xi_h}\right)
			+ \left( \mathbf{q}_h^n, \boldsymbol{\xi_h}\right) + \left(\mathbf{L} \, \tilde{\mathbf{u}}_h^{n+1}, \boldsymbol{\xi_h}\right) = 0 \quad \forall \boldsymbol{\xi_h} \in (W_h)^d.
			\label{eq:PP2}              
		\end{align}
	\end{subequations} 
	
	\item Compute $\Psi_b^{n+1}$ on $\Gamma_n$:
	\begin{equation}
		\Psi_b^{n+1} = \left(\mathbf{\Sigma}_b^{n+1} + \nu\phi^{-1} \nabla \tilde{\mathbf{u}}_h^{n+1} \mathbf{n} \right) \cdot \mathbf{n} \quad \textnormal{on} \quad \Gamma_n.
	\end{equation}
	
	\item MFE discretization of the \PjP\ \eqref{eq:Cs}: 
	
	Find \(\mathbf{u}_h^{n+1} \in \mathbf{V}_h : \mathbf{u}_h^{n+1} \cdot \mathbf{n} = Q_h^\Gamma\left(\mathbf{u}_b \cdot \mathbf{n}\right)\) on \(\Gamma_d\) and \(\Psi_h^{n+1} \in W_h\) \st 
	\begin{subequations} \label{eq:Projecproblem}
		\begin{align}
			&  \bigg( \frac{1}{\phi} \frac{\mathbf{u}_h^{n+1}-\tilde{\mathbf{u}}_h^{n+1}}{\Delta t}, \mathbf{v}_h\bigg) + \left( \mathbf{L} \left(\mathbf{u}_h^{n+1} - \tilde{\mathbf{u}}_h^{n+1}\right), \mathbf{v}_h\right)
			- ( \Psi_h^{n+1} - \Psi_h^n,\nabla \cdot \mathbf{v}_h )
			\notag \\
			&\qquad
			=  - \langle \Psi_b^{n+1} - \Psi_b^n, \mathbf{v}_h \cdot \mathbf{n} \rangle_{\Gamma_n}
			\quad \forall \mathbf{v}_h \in \mathbf{V}_{h,0,\Gamma_d}, \label{eq:Projecproblem_1} \\
			& \left( \nabla \cdot \mathbf{u}_h^{n+1},w_h\right) = 0 \quad \forall w_h \in W_h.
			\label{eq:Projecproblem_2} 
		\end{align}
	\end{subequations}
	
	\item Pressure gradient update. Find \(\mathbf{q}_h^{n+1}\in (W_h)^d\) \st 
	%
		\begin{gather} 
			\left( \mathbf{q}_h^{n+1}, \bxi_h \right) =  \left( \mathbf{q}_h^n, \bxi_h \right)  
			- \left( \frac{1}{\phi} \frac{\mathbf{u}_h^{n+1}-\tilde{\mathbf{u}}_h^{n+1} }{\Delta t} ,\bxi_h \right) - \left(\mathbf{L} \left(\mathbf{u}_h^{n+1}-\tilde{\mathbf{u}}_h^{n+1}\right), \bxi_h \right) \quad \forall \bxi_h \in (W_h)^d. \label{eq:up_q}
		\end{gather}
	
	\item Go to the next time step.
	
\end{itemize}

End do time loop

\begin{remark}\label{rem:div-free}
	The intermediate velocity $\tilde{\mathbf{u}}_h^{n+1} \in (W_h)^d$ computed in the \PP\ \eqref{eq:Predproblem} is discontinuous and the pseudostress $\bsi_h^{n+1} \in (\mathbf{V}_h)^d$ is \(H(\div)\)-conforming. The new pressure $\Psi_h^{n+1} \in W_h$ is computed in the projection step \eqref{eq:Projecproblem} and the new velocity $\bu_h^{n+1} \in \bV_h$ is \(H(\div)\)-conforming and pointwise divergence-free. In particular, \eqref{div-Vh} and \eqref{eq:Projecproblem_2} imply that
	\begin{equation}\label{div-free}
		\nabla \cdot \mathbf{u}_h^{n+1} = 0.
	\end{equation}
	In addition,
	\begin{equation}\label{uh-in-Wh}
		\bu_h^{n+1} \in (W_h)^d.
	\end{equation}  
	This holds trivially for the BDM spaces, since $\bV_h$ and $W_h$ contain polynomials of the same degree, while for the RT spaces it follows from \eqref{div-free} and \cite[Corollary~2.3.1]{BBF}. We further note that \eqref{eq:up_q} gives an approximation of $\nabla \Psi(t_{n+1})$, since, using \eqref{uh-in-Wh} and \eqref{eq:Projecproblem_1}, it gives
	\begin{equation*} 
		\mathbf{q}_h^{n+1} \simeq \mathbf{q}_h^n - \frac{1}{\phi}\frac{\mathbf{u}_h^{n+1}-\tilde{\mathbf{u}}_h^{n+1} }{\Delta t}
		- \mathbf{L} \left(\mathbf{u}_h^{n+1}-\tilde{\mathbf{u}}_h^{n+1}\right)
		\simeq \nabla \Psi(t_n) + \nabla \big(\Psi(t_{n+1})- \Psi(t_n)\big)
		= \nabla \Psi(t_{n+1}).
	\end{equation*}
	Finally, adding \eqref{eq:PP2} and \eqref{eq:up_q} gives
	$$
	\left( \frac{1}{\phi}\frac{\mathbf{u}_h^{n+1} - \mathbf{u}_h^n}{\Delta t}, \boldsymbol{\xi_h} \right)
	+ \left( \nabla \cdot \bsi_h^{n+1}, \boldsymbol{\xi_h}\right)
	+ \left( \mathbf{q}_h^{n+1}, \boldsymbol{\xi_h}\right) + \left(\mathbf{L} \, \mathbf{u}_h^{n+1}, \boldsymbol{\xi_h}\right) = 0 \quad \forall \boldsymbol{\xi_h} \in (W_h)^d,
	$$
	which is the backward Euler approximation of the momentum conservation equation \eqref{eq:momentum} at $t^{n+1}$.
\end{remark}

\begin{remark}
	It is possible to consider a simplified version of the numerical method, where the ``permeability correction term'' $\mathbf{L} \left(\mathbf{u}_h^{n+1} - \tilde{\mathbf{u}}_h^{n+1}\right)$ is not included in \eqref{eq:Projecproblem} and \eqref{eq:up_q}. This also results in a stable and convergent method. However, we illustrate in \cref{tests} that this method produces less accurate results and may result in non-physical solution features with coarse spatial and/or temporal discretizations. 
\end{remark}

\begin{remark}\label{choice}
	With the choice of keeping \(\frac{\nu}{\phi}\) inside the divergence operator in \eqref{eq:momentum}, our formulation is based on the effective viscous stress $\bsi = - \frac{\nu}{\phi} \nabla \mathbf{u}$. Since we approximate $\bsi$ in $H(\div,\Omega)$, we enforce numerically the continuity of $\bsi\bn$ on every finite element face, including on sharp interfaces with discontinuity in $\phi$. This is consistent with the discussion after Eq. (65) in \cite{CHANDESRIS20062137}. Also, our choice leads to a symmetric algebraic system, see Remark~\ref{sym-system}.
\end{remark}

\section{Stability and error analysis} \label{stab_err_an}
In the analysis we consider the case of zero velocity boundary condition on the entire boundary. In addition, to simplify the presentation, we assume that the porosity $\phi$ is a piecewise constant function on the finite element partition. Let
$$
\mathbf{V}_{0} = \left\{\mathbf{v} \in \mathbf{V} : \mathbf{v}\cdot \mathbf{n} = 0 \ \textnormal{on} \ \partial\Omega\right\}, \quad W_0 = \left\{w \in W: \int_\Omega w = 0\right\}.
$$
The restriction of the pressure space is needed to guarantee uniqueness of the pressure $\Psi$. The corresponding MFE spaces are denoted by $\bV_{h,0}$ and $W_{h,0}$. It holds that
\begin{equation}\label{div-Vh0}
	\nabla\cdot \bV_{h,0} = W_{h,0}.
\end{equation}
Algorithm \eqref{eq:Predproblem}--\eqref{eq:up_q} takes the following form.

\begin{itemize}
	
	\item Predictor Problem: Find $\bsi_h^{n+1} \in (\mathbf{V}_h)^d$
	and \(\tilde{\mathbf{u}}_h^{n+1} \in (W_h)^d\) \st
	\begin{subequations} \label{eq:Predproblem-0}
		\begin{align}
			& \left( \frac{\phi}{\nu}  \bsi_h^{n+1}, \boldsymbol{\tau}_h \right)
			- \left( \tilde{\mathbf{u}}_h^{n+1}, \nabla \cdot \boldsymbol{\tau}_h \right)
			= 0  \quad
			\forall \boldsymbol{\tau}_h \in (\mathbf{V}_h)^d, \label{pred1} \\
			& \left( \frac{1}{\phi}\frac{\tilde{\mathbf{u}}_h^{n+1} - \mathbf{u}_h^n}{\Delta t}, \boldsymbol{\xi_h} \right)
			+ \left( \nabla \cdot \bsi_h^{n+1}, \boldsymbol{\xi_h}\right)
			+ \left( \mathbf{q}_h^n, \boldsymbol{\xi_h}\right) + \left(\mathbf{L} \, \tilde{\mathbf{u}}_h^{n+1}, \boldsymbol{\xi_h}\right) = 0 \quad \forall \boldsymbol{\xi_h} \in (W_h)^d.
			\label{pred2}              
		\end{align}
	\end{subequations} 
	
	\item Projection problem: Find $\mathbf{u}_h^{n+1} \in \mathbf{V}_{h,0}$
	and \(\Psi_h^{n+1} \in W_{h,0}\) \st 
	\begin{subequations} \label{eq:Projecproblem-0}
		\begin{align}
			&  \bigg( \frac{1}{\phi} \frac{\mathbf{u}_h^{n+1}-\tilde{\mathbf{u}}_h^{n+1}}{\Delta t}, \mathbf{v}_h\bigg)
			+ \left( \mathbf{L} \left(\mathbf{u}_h^{n+1}-\tilde{\mathbf{u}}_h^{n+1}\right), \mathbf{v}_h\right) - ( \Psi_h^{n+1} - \Psi_h^n,\nabla \cdot \mathbf{v}_h ) = 0
			\quad \forall \mathbf{v}_h \in \mathbf{V}_{h,0}, \label{eq:Projecproblem_1-0} \\
			& \left( \nabla \cdot \mathbf{u}_h^{n+1},w_h\right) = 0 \quad \forall w_h \in W_{h,0}.
			\label{eq:Projecproblem_2-0} 
		\end{align}
	\end{subequations}
	
	\item Pressure gradient update. Find \(\mathbf{q}_h^{n+1}\in (W_h)^d\) \st 
	%
		\begin{gather} 
			\left( \mathbf{q}_h^{n+1}, \bxi_h \right) =  \left( \mathbf{q}_h^n, \bxi_h \right)  
			- \left( \frac{1}{\phi} \frac{\mathbf{u}_h^{n+1}-\tilde{\mathbf{u}}_h^{n+1} }{\Delta t} ,\bxi_h \right)
			- \left(\mathbf{L} \left(\mathbf{u}_h^{n+1} - \tilde{\mathbf{u}}_h^{n+1}\right), \bxi_h \right)
			\quad \forall \bxi_h \in (W_h)^d. \label{eq:up_q-0}
		\end{gather}
		
	\end{itemize}
	
	\subsection{Stability analysis}
	Let $\bV^0 = \{\bv \in \bV: \nabla\cdot\bv = 0\}$ mark the divergence-free subspace of $\bV$, using a similar notation for $\bV_h^0$ and $\bV_{h,0}^0$. The following orthogonality property will be used in the analysis.
	
	\begin{lemma}
		For $\bq_h^{n+1}$ computed in \eqref{eq:up_q-0}, it holds that
		\begin{equation}\label{q-orth}
			(\bq_h^{n+1},\bv_h) = 0 \quad \forall \bv_h \in \bV_{h,0}^0.
		\end{equation}
	\end{lemma}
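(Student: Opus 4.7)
\bigskip

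\textbf{Proof plan.} The plan is to argue by induction on $n$, with the induction step provided by the pressure gradient update \eqref{eq:up_q-0} combined with the projection equation \eqref{eq:Projecproblem_1-0}, and the base case following from the choice of initial data $\bq_h^0 = Q_h^d\,\nabla\Psi_0$.

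First I would verify that $\bV_{h,0}^0$ is an admissible space of test functions in \eqref{eq:up_q-0}, whose test space is $(W_h)^d$. For BDM this is immediate since $\bV_h\subset(W_h)^d$, while for RT it follows exactly as in Remark~\ref{rem:div-free} via \cite[Corollary~2.3.1]{BBF}: a divergence-free element of $\bV_h$ is polynomial of degree $\le k$ on each simplex and hence belongs to $(W_h)^d$. Thus for $\bv_h\in\bV_{h,0}^0$ we may take $\bxi_h=\bv_h$ in \eqref{eq:up_q-0}.

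The induction step is then short. For $\bv_h\in\bV_{h,0}^0$ we have $\nabla\cdot\bv_h=0$, so testing \eqref{eq:Projecproblem_1-0} with $\bv_h$ makes the pressure-increment term vanish, leaving
\begin{equation*}
\left(\frac{1}{\phi}\frac{\bu_h^{n+1}-\tilde\bu_h^{n+1}}{\Delta t},\bv_h\right) + \left(\mathbf{L}(\bu_h^{n+1}-\tilde\bu_h^{n+1}),\bv_h\right) = 0.
\end{equation*}
Substituting this identity into \eqref{eq:up_q-0} with $\bxi_h=\bv_h$ yields $(\bq_h^{n+1},\bv_h)=(\bq_h^n,\bv_h)$, so the orthogonality is propagated from one step to the next.

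It remains to check the base case $n=-1$, i.e.\ $(\bq_h^0,\bv_h)=0$ for all $\bv_h\in\bV_{h,0}^0$. Using $\bq_h^0=Q_h^d\,\nabla\Psi_0$, the inclusion $\bv_h\in(W_h)^d$ from the first step, and the definition of $Q_h^d$, we get $(\bq_h^0,\bv_h)=(\nabla\Psi_0,\bv_h)$, which by integration by parts equals $-(\Psi_0,\nabla\cdot\bv_h)+\langle\Psi_0,\bv_h\cdot\bn\rangle_{\partial\Omega}$; both terms vanish because $\nabla\cdot\bv_h=0$ and $\bv_h\cdot\bn=0$ on $\partial\Omega$. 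Combining the base case with the induction step gives \eqref{q-orth}. The only subtle point in the whole argument is confirming that divergence-free functions in $\bV_{h,0}$ really do lie in $(W_h)^d$, which is what makes \eqref{eq:up_q-0} testable against them; once this is in hand the rest is a two-line computation.
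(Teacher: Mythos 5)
Your proposal is correct and follows essentially the same route as the paper's proof: establish $\bV_{h,0}^0 \subset (W_h)^d$ so that \eqref{eq:up_q-0} can be tested with $\bv_h$, use \eqref{eq:Projecproblem_1-0} (with $\nabla\cdot\bv_h=0$) to show the update preserves $(\bq_h^{n+1},\bv_h)=(\bq_h^{n},\bv_h)$, and reduce to the initialization $\bq_h^0=Q_h^d\,\nabla\Psi_0$, which vanishes against $\bv_h$ after integration by parts. The only difference is presentational: you phrase the telescoping as an explicit induction, whereas the paper writes it as a chain of equalities.
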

	
	\begin{proof}
		Using the argument for \eqref{uh-in-Wh}, we have that $\bV_h^0 \subset (W_h)^d$, so
		we can test \eqref{eq:up_q-0} with $\bxi_h = \bv_h \in \bV_{h,0}^0$, which, together with \eqref{eq:Projecproblem_1-0}, implies
		\begin{align*}
			(\bq_h^{n+1},\bv_h) & = (\bq_h^{n},\bv_h) = \ldots = (\bq_h^{0},\bv_h) = (Q_h^d\, \nabla \Psi_0,\bv_h)
			\\
			&
			= (\nabla \Psi_0,\bv_h) = - (\Psi_0,\nabla\cdot\bv_h) + \<\Psi_0,\bv_h\cdot\bn\>_{\Gamma} = 0.
		\end{align*}
	\end{proof}
	
	The following algebraic identity will be used in the analysis,
	\begin{equation}\label{a-bb}
		a(a-b) = \frac12\left(a^2 - b^2 + (a-b)^2\right)
	\end{equation}
	and Young's inequality, for any $\epsilon > 0$,
	\begin{equation}\label{young}
		ab \le \frac{\epsilon}{2}a^2 + \frac{1}{2\epsilon}b^2.
	\end{equation}
	
	\begin{theorem}\label{thm:stab}
		For the method \eqref{eq:Predproblem-0}--\eqref{eq:up_q-0}, there exists a constant $C$ independent of $\Delta t$ and $h$ such that
		\begin{align}
			& \Delta t \sum_{n=0}^{N-1} \nu^{-1}\|\phi^{1/2}\bsi_h^{n+1}\|^2 + \|\phi^{-1/2}\bu_h^N\|^2
			+ \Delta t\sum_{n=0}^{N-1} \|\bL^{1/2}\bu_h^{n+1}\|^2
			+ \Delta t\sum_{n=0}^{N-1} \|\bL^{1/2}\tilde\bu_h^{n+1}\|^2
			\nonumber \\
			& \qquad    
			+ \sum_{n=0}^{N-1} \|\phi^{-1/2}(\bu_h^{n+1} - \tilde\bu_h^{n+1})\|^2 
			+ \Delta t^2\|\phi^{1/2}\bq_h^N\|^2
			\le C\left(\|\phi^{-1/2}\bu_h^0\|^2 + \Delta t^2\|\phi^{1/2}\bq_h^0\|^2\right). \label{eq:stab}
		\end{align}
	\end{theorem}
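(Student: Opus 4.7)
My plan is to establish a per-step energy identity and close it via a discrete Gronwall argument. First, testing \eqref{pred1} with $\btau_h = \bsi_h^{n+1}$ produces $(\nabla\cdot\bsi_h^{n+1},\tilde\bu_h^{n+1}) = \nu^{-1}\|\phi^{1/2}\bsi_h^{n+1}\|^2$. Next, I test \eqref{pred2} with $\bxi_h = \tilde\bu_h^{n+1}\in(W_h)^d$ and \eqref{eq:Projecproblem_1-0} with $\bv_h = \bu_h^{n+1}\in\bV_{h,0}$; the second choice eliminates the pressure coupling in the projection step because $\bu_h^{n+1}$ is pointwise divergence-free by \eqref{eq:Projecproblem_2-0}. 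Summing the two tested equations, applying \eqref{a-bb} to both discrete time derivatives, and polarizing the drag combination so that $\|\bL^{1/2}\tilde\bu_h^{n+1}\|^2 + (\bL(\bu_h^{n+1}-\tilde\bu_h^{n+1}),\bu_h^{n+1})$ becomes $\tfrac12(\|\bL^{1/2}\bu_h^{n+1}\|^2 + \|\bL^{1/2}\tilde\bu_h^{n+1}\|^2 + \|\bL^{1/2}(\bu_h^{n+1}-\tilde\bu_h^{n+1})\|^2)$ yields an intermediate identity containing the telescoped velocity block, the two positive squared-differences $\tfrac{1}{2\Delta t}\|\phi^{-1/2}(\tilde\bu_h^{n+1}-\bu_h^n)\|^2$ and $\tfrac{1}{2\Delta t}\|\phi^{-1/2}(\bu_h^{n+1}-\tilde\bu_h^{n+1})\|^2$, the stress and drag dissipations, and a single remaining cross coupling $(\bq_h^n,\tilde\bu_h^{n+1})$.

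The cross coupling is handled by Lemma \eqref{q-orth} (which, by the induction in its proof, holds also for every $\bq_h^n$) together with $\bu_h^{n+1}\in\bV_{h,0}^0$, giving $(\bq_h^n,\tilde\bu_h^{n+1}) = -(\bq_h^n,\bu_h^{n+1}-\tilde\bu_h^{n+1})$. Since $\phi$ is piecewise constant, $\Delta t\,\phi\bq_h^n\in(W_h)^d$ is a valid test function in \eqref{eq:up_q-0}; using it and applying \eqref{a-bb} converts $-(\bq_h^n,\bu_h^{n+1}-\tilde\bu_h^{n+1})$ into the telescoping pressure block $\tfrac{\Delta t}{2}(\|\phi^{1/2}\bq_h^{n+1}\|^2-\|\phi^{1/2}\bq_h^n\|^2)$, the negative jump $-\tfrac{\Delta t}{2}\|\phi^{1/2}(\bq_h^{n+1}-\bq_h^n)\|^2$, and a cross term $\Delta t(\phi\bL(\bu_h^{n+1}-\tilde\bu_h^{n+1}),\bq_h^n)$. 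Assuming that $\bL$ is also piecewise constant (consistent with the simplifying assumption on $\phi$), \eqref{eq:up_q-0} holds in strong elementwise form $\bq_h^{n+1}-\bq_h^n = -\tfrac{1}{\phi\Delta t}(\bu_h^{n+1}-\tilde\bu_h^{n+1}) - \bL(\bu_h^{n+1}-\tilde\bu_h^{n+1})$, from which I expand $\Delta t^2\|\phi^{1/2}(\bq_h^{n+1}-\bq_h^n)\|^2 = \|\phi^{-1/2}(\bu_h^{n+1}-\tilde\bu_h^{n+1})\|^2 + 2\Delta t\|\bL^{1/2}(\bu_h^{n+1}-\tilde\bu_h^{n+1})\|^2 + \Delta t^2\|\phi^{1/2}\bL(\bu_h^{n+1}-\tilde\bu_h^{n+1})\|^2$.

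The main obstacle is preserving all six positive LHS contributions of \eqref{eq:stab} simultaneously through these substitutions. Note that the positive $\tfrac{1}{2\Delta t}\|\phi^{-1/2}(\bu_h^{n+1}-\tilde\bu_h^{n+1})\|^2$ from the time-derivative polarization would cancel exactly against the matching piece of the expansion of $-\tfrac{\Delta t}{2}\|\phi^{1/2}(\bq_h^{n+1}-\bq_h^n)\|^2$ were the permeability correction in \eqref{eq:Projecproblem-0} and \eqref{eq:up_q-0} absent; the compensating $\|\bL^{1/2}(\bu_h^{n+1}-\tilde\bu_h^{n+1})\|^2$ and $\|\phi^{1/2}\bL(\bu_h^{n+1}-\tilde\bu_h^{n+1})\|^2$ contributions produced by $\bL$ are precisely what keep the squared-difference bound on the LHS. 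The residual combination $-\tfrac{\Delta t}{2}\|\phi^{1/2}\bL(\bu_h^{n+1}-\tilde\bu_h^{n+1})\|^2 + \Delta t(\phi\bL(\bu_h^{n+1}-\tilde\bu_h^{n+1}),\bq_h^n)$ will be controlled via the completed-square identity $-\tfrac12 a^2 + ab = \tfrac12 b^2 - \tfrac12(a-b)^2$ applied with $a = \sqrt{\Delta t}\phi^{1/2}\bL(\bu_h^{n+1}-\tilde\bu_h^{n+1})$ and $b = \sqrt{\Delta t}\phi^{1/2}\bq_h^n$, leaving a Gronwall-amenable $\tfrac{\Delta t}{2}\|\phi^{1/2}\bq_h^n\|^2$ on the right plus a nonnegative remainder absorbed on the left. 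Multiplying by $2\Delta t$, summing $n=0,\dots,N-1$ so that the velocity and pressure blocks telescope to their terminal values, and invoking the discrete Gronwall inequality to absorb $\Delta t\sum_{n=0}^{N-1}\Delta t\|\phi^{1/2}\bq_h^n\|^2$ then yields \eqref{eq:stab}, with $C$ depending only on $\|\phi^{1/2}\bL^{1/2}\|_\infty$ and the final time $\mathcal{T}$. The delicate part is careful sign-tracking through the cascade of algebraic rearrangements.
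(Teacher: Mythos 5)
Your opening moves coincide with the paper's: testing \eqref{pred1}--\eqref{pred2} with $(\bsi_h^{n+1},\tilde\bu_h^{n+1})$, testing \eqref{eq:Projecproblem_1-0} with $\bu_h^{n+1}$, the drag polarization, and the reduction $(\bq_h^n,\tilde\bu_h^{n+1})=-(\bq_h^n,\bu_h^{n+1}-\tilde\bu_h^{n+1})$ via the orthogonality lemma are all correct. The gap starts where you convert this cross term \emph{exactly} by testing \eqref{eq:up_q-0} with $\phi\bq_h^n$. Writing $w=\bu_h^{n+1}-\tilde\bu_h^{n+1}$, the negative jump you create, $-\tfrac{\Delta t}{2}\|\phi^{1/2}(\bq_h^{n+1}-\bq_h^n)\|^2$, expands (via your strong form) into $-\tfrac{1}{2\Delta t}\|\phi^{-1/2}w\|^2-\|\bL^{1/2}w\|^2-\tfrac{\Delta t}{2}\|\phi^{1/2}\bL w\|^2$: all three pieces carry minus signs. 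So the projection jump $\tfrac{1}{2\Delta t}\|\phi^{-1/2}w\|^2$ is cancelled \emph{exactly}, and the drag dissipation is over-drawn, since $\tfrac12\|\bL^{1/2}\bu_h^{n+1}\|^2+\tfrac12\|\bL^{1/2}\tilde\bu_h^{n+1}\|^2-\tfrac12\|\bL^{1/2}w\|^2=(\bL\bu_h^{n+1},\tilde\bu_h^{n+1})$, which is indefinite. Your claim that the $\bL$-generated pieces "keep the squared-difference bound on the LHS" has the sign backwards: they make the cancellation worse, and your per-step identity fails to produce three of the six left-hand terms of \eqref{eq:stab}, namely $\sum_n\|\phi^{-1/2}(\bu_h^{n+1}-\tilde\bu_h^{n+1})\|^2$ and both drag sums. (The auxiliary assumption that $\bL$ is piecewise constant, needed for your pointwise form of \eqref{eq:up_q-0}, is also not made in the paper; it could be sidestepped because the $L^2$-projection decreases the $\phi$-weighted norm, but that does not repair the sign problem.)

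There is a second, independent failure at the end: your completed square leaves $\tfrac{\Delta t}{2}\|\phi^{1/2}\bq_h^n\|^2$ per step, which is of the \emph{same order} as the pressure telescoping increment $\tfrac{\Delta t}{2}\big(\|\phi^{1/2}\bq_h^{n+1}\|^2-\|\phi^{1/2}\bq_h^n\|^2\big)$. After multiplying by $2\Delta t$ and summing, you are left with $\Delta t^2\|\phi^{1/2}\bq_h^N\|^2\le(\text{data})+\sum_{n}\Delta t^2\|\phi^{1/2}\bq_h^n\|^2$, and discrete Gronwall then yields a constant like $2^{N}=2^{\mathcal{T}/\Delta t}$, not a $\Delta t$-independent $C$; a Gronwall-amenable term must carry an extra factor $\Delta t$ relative to the telescoped quantity. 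The paper avoids both problems by testing \eqref{eq:up_q-0} with $\phi\bq_h^{n+1}$ and multiplying by $\Delta t^2$, so the jump $\tfrac{\Delta t^2}{2}\|\phi^{1/2}(\bq_h^{n+1}-\bq_h^n)\|^2$ enters with a \emph{plus} sign; the cross terms are assembled into $\Delta t(\bq_h^{n+1}-\bq_h^n,\tilde\bu_h^{n+1}-\bu_h^n)$ and absorbed by Young into that positive jump and the \emph{predictor} jump $\|\phi^{-1/2}(\tilde\bu_h^{n+1}-\bu_h^n)\|^2$ (this is the term sacrificed, not the projection jump appearing in \eqref{eq:stab}); and the remaining $-\Delta t^2(\bL w,\phi\bq_h^{n+1})$ is split as $\tfrac{\Delta t}{2}\|\bL^{1/2}w\|^2+\tfrac{\Delta t^3(\phi\bL)_{\max}}{2}\|\phi^{1/2}\bq_h^{n+1}\|^2$, whose $\Delta t^3$ factor is precisely what makes the Gronwall step give a uniform constant (under $\Delta t\le 1/(2(\phi\bL)_{\max})$). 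If you want to salvage your route, you must not consume the projection jump exactly; redirect the absorption onto the predictor jump and keep a $\Delta t$-small coefficient in front of any $\|\phi^{1/2}\bq_h^{\,\cdot}\|^2$ term you send to Gronwall.
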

	
	\begin{proof}
		We take test functions $(\btau_h,\bxi_h) = (\bsi_h^{n+1},\tilde\bu_h^{n+1})$ in \eqref{eq:Predproblem-0}, combine the equations, use \eqref{a-bb}, and multiply by $\Delta t$, to obtain
		\begin{align}
			& \Delta t \, \nu^{-1}\|\phi^{1/2}\bsi_h^{n+1}\|^2
			+ \frac12\|\phi^{-1/2}\tilde\bu_h^{n+1}\|^2 - \frac12\|\phi^{-1/2}\bu_h^{n}\|^2
			+ \frac12\|\phi^{-1/2}(\tilde\bu_h^{n+1} - \bu_h^{n})\|^2 \nonumber \\
			& \qquad
			+ \Delta t\|\bL^{1/2}\tilde\bu_h^{n+1}\|^2
			+ \Delta t \, (\bq_h^n,\tilde\bu_h^{n+1}) = 0. \label{eq:stab-1}
		\end{align}
		Next, we take $\bv_h = \bu_h^{n+1}$ in \eqref{eq:Projecproblem_1-0} and use \eqref{a-bb} and \eqref{div-free}, obtaining
		\begin{align}
			& \frac12\|\phi^{-1/2}\bu_h^{n+1}\|^2 - \frac12\|\phi^{-1/2}\tilde\bu_h^{n+1}\|^2
			+ \frac12\|\phi^{-1/2}(\bu_h^{n+1} - \tilde\bu_h^{n+1})\|^2 \nonumber \\
			& \quad
			+ \frac{\Delta t}{2}\|\bL^{1/2}\bu_h^{n+1}\|^2
			- \frac{\Delta t}{2}\|\bL^{1/2}\tilde\bu_h^{n+1}\|^2
			+ \frac{\Delta t}{2}\|\bL^{1/2}(\bu_h^{n+1} - \tilde\bu_h^{n+1})\|^2 = 0. \label{eq:stab-2}
		\end{align}
		Taking $\bxi_h = \phi\bq_h^{n+1}$ in \eqref{eq:up_q-0}, which is a valid choice, since $\phi$ is assumed to be piecewise constant on the mesh, using \eqref{a-bb} and multiplying by $\Delta t^2$ results in
		\begin{align}
			& \frac{\Delta t^2}{2}\left(\|\phi^{1/2}\bq_h^{n+1}\|^2 - \|\phi^{1/2}\bq_h^{n}\|^2
			+ \|\phi^{1/2}(\bq_h^{n+1} - \bq_h^{n})\|^2\right)
			\nonumber \\
			& \qquad
			= \Delta t(\tilde{\mathbf{u}}_h^{n+1},\bq_h^{n+1})
			- \Delta t^2 \left(\bL(\bu_h^{n+1} - \tilde\bu_h^{n+1}),\phi\bq_h^{n+1}\right),
			\label{eq:stab-3}
		\end{align}
		where we used \eqref{q-orth} for the first term on the r.h.s.. Summing \eqref{eq:stab-1}--\eqref{eq:stab-3}, we obtain
		\begin{align}
			& \Delta t \, \nu^{-1}\|\phi^{1/2}\bsi_h^{n+1}\|^2 + \frac12\left(\|\phi^{-1/2}\bu_h^{n+1}\|^2 - \|\phi^{-1/2}\bu_h^{n}\|^2\right) \nonumber \\
			& \qquad
			+ \frac12\left(\|\phi^{-1/2}(\tilde\bu_h^{n+1} - \bu_h^{n})\|^2
			+ \|\phi^{-1/2}(\bu_h^{n+1} - \tilde\bu_h^{n+1})\|^2\right)
			\nonumber \\
			& \qquad
			+ \frac{\Delta t}{2}\|\bL^{1/2}\bu_h^{n+1}\|^2
			+ \frac{\Delta t}{2}\|\bL^{1/2}\tilde\bu_h^{n+1}\|^2
			+ \frac{\Delta t}{2}\|\bL^{1/2}(\bu_h^{n+1} - \tilde\bu_h^{n+1})\|^2 
			\nonumber \\  
			& \qquad + \frac{\Delta t^2}{2}\left(\|\phi^{1/2}\bq_h^{n+1}\|^2 - \|\phi^{1/2}\bq_h^{n}\|^2
			+ \|\phi^{1/2}(\bq_h^{n+1} - \bq_h^{n})\|^2\right)
			\nonumber \\
			& \quad
			= \Delta t\, (\bq_h^{n+1} - \bq_h^n,\tilde\bu_h^{n+1})
			- \Delta t^2 \left(\bL(\bu_h^{n+1} - \tilde\bu_h^{n+1}),\phi\bq_h^{n+1}\right).
			\label{eq:stab-4}
		\end{align}
		For the first term on the r.h.s. above, using \eqref{q-orth} and \eqref{young}, we write
		\begin{align}
			\Delta t\, (\bq_h^{n+1} - \bq_h^n,\tilde\bu_h^{n+1})
			& = \Delta t\, (\bq_h^{n+1} - \bq_h^n,\tilde\bu_h^{n+1} - \bu_h^n) \nonumber \\
			& \le \frac{\Delta t^2}{2}\|\phi^{1/2}(\bq_h^{n+1} - \bq_h^{n})\|^2
			+ \frac12\|\phi^{-1/2}(\tilde\bu_h^{n+1} - \bu_h^n)\|^2.
		\end{align}\label{eq:stab-5}
		For the second term on the r.h.s. of \eqref{eq:stab-4}, using \eqref{young}, we obtain
		\begin{equation}\label{eq:stab-6}
			- \Delta t^2 \left(\bL(\bu_h^{n+1} - \tilde\bu_h^{n+1}),\phi\bq_h^{n+1}\right)
			\le \frac{\Delta t}{2}\|\bL^{1/2}(\bu_h^{n+1} - \tilde\bu_h^{n+1})\|^2
			+ \frac{\Delta t^3 (\phi\bL)_{\max}}{2}\|\phi^{1/2}\bq_h^{n+1}\|^2,
		\end{equation}
		where $(\phi\bL)_{\max}$ is the spatial supremum of the largest eigenvalue of $\phi\bL$, 
Bound \eqref{eq:stab} follows by combining \eqref{eq:stab-4}--\eqref{eq:stab-6}, summing over $0 \le n \le N-1$, and applying the discrete Gronwall inequality \cite[Lemma~1.4.2]{QV-book} for the last term in \eqref{eq:stab-6}, under the assumption that $\Delta t \le 1/(2(\phi\bL)_{\max})$.
	\end{proof}
	
	\subsection{Stability of the simplified method}
	We next give a stability bound for the simplified method without the permeability correction term. In this case equations \eqref{eq:Projecproblem_1-0} and \eqref{eq:up_q-0} are replaced by, respectively,
	\begin{align}
		&  \bigg( \frac{1}{\phi} \frac{\mathbf{u}_h^{n+1}-\tilde{\mathbf{u}}_h^{n+1}}{\Delta t}, \mathbf{v}_h\bigg)
		- ( \Psi_h^{n+1} - \Psi_h^n,\nabla \cdot \mathbf{v}_h ) = 0
		\quad \forall \mathbf{v}_h \in \mathbf{V}_{h}, \label{eq:Projecproblem_1-simpl} \\
		& \left( \mathbf{q}_h^{n+1}, \bxi_h \right) =  \left( \mathbf{q}_h^n, \bxi_h \right)  
		- \left( \frac{1}{\phi} \frac{\mathbf{u}_h^{n+1}-\tilde{\mathbf{u}}_h^{n+1} }{\Delta t} ,\bxi_h \right)
		\quad \forall \bxi_h \in (W_h)^d. \label{eq:up_q-simpl}
	\end{align}
	
	The proof of the following theorem is similar to the proof of Theorem~\ref{thm:stab}, with a simplified treatment of the Brinkman terms, and it is omitted for sake of space.
	
	\begin{theorem}\label{thm:stab-simpl}
		For the simplified method \eqref{eq:Predproblem-0}, \eqref{eq:Projecproblem_1-simpl}--\eqref{eq:Projecproblem_2-0}, \eqref{eq:up_q-simpl}, it holds that
		\begin{align}
			& 2\Delta t \sum_{n=0}^{N-1} \nu^{-1}\|\phi^{1/2}\bsi_h^{n+1}\|^2 + \|\phi^{-1/2}\bu_h^N\|^2
			+ 2\Delta t\sum_{n=0}^{N-1} \|\bL^{1/2}\tilde\bu_h^{n+1}\|^2
			\nonumber \\
			& \qquad  
			+ \sum_{n=0}^{N-1} \|\phi^{-1/2}(\bu_h^{n+1} - \tilde\bu_h^{n+1})\|^2 
			+ \Delta t^2\|\phi^{1/2}\bq_h^N\|^2
			\le \|\phi^{-1/2}\bu_h^0\|^2 + \Delta t^2\|\phi^{1/2}\bq_h^0\|^2. \label{eq:stab-simpl}
		\end{align}
	\end{theorem}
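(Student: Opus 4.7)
The plan is to mimic the structure of the proof of Theorem~\ref{thm:stab}, testing the three equations of the simplified scheme with carefully chosen functions and then adding the results; since the permeability correction term is absent, the bookkeeping simplifies and the invocation of Gronwall's inequality can be dropped.

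First I would verify that the orthogonality property \eqref{q-orth} continues to hold for the simplified update \eqref{eq:up_q-simpl}. The argument is identical to the one in the excerpt: for $\bv_h \in \bV_{h,0}^0 \subset (W_h)^d$, testing \eqref{eq:Projecproblem_1-simpl} with $\bv_h$ gives $(\phi^{-1}(\bu_h^{n+1}-\tilde\bu_h^{n+1})/\Delta t, \bv_h) = 0$, so \eqref{eq:up_q-simpl} reduces to $(\bq_h^{n+1},\bv_h) = (\bq_h^n,\bv_h)$, and induction together with the initialization $\bq_h^0 = Q_h^d \nabla\Psi_0$ gives $(\bq_h^{n+1},\bv_h) = 0$.

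Next I would derive three energy identities. Testing \eqref{pred1}--\eqref{pred2} with $(\btau_h,\bxi_h) = (\bsi_h^{n+1},\tilde\bu_h^{n+1})$, adding, using \eqref{a-bb}, and multiplying by $\Delta t$ reproduces identity \eqref{eq:stab-1} verbatim. Testing the simplified projection \eqref{eq:Projecproblem_1-simpl} with $\bv_h = \bu_h^{n+1}$ and using \eqref{a-bb} and \eqref{div-free} yields the stripped-down identity
\begin{equation*}
\tfrac12\|\phi^{-1/2}\bu_h^{n+1}\|^2 - \tfrac12\|\phi^{-1/2}\tilde\bu_h^{n+1}\|^2 + \tfrac12\|\phi^{-1/2}(\bu_h^{n+1}-\tilde\bu_h^{n+1})\|^2 = 0,
\end{equation*}
with no $\bL$-contributions. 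Finally, taking $\bxi_h = \phi\bq_h^{n+1}$ in \eqref{eq:up_q-simpl} (a valid choice since $\phi$ is piecewise constant on the mesh) and multiplying by $\Delta t^2$ produces a simpler analogue of \eqref{eq:stab-3}:
\begin{equation*}
\tfrac{\Delta t^2}{2}\bigl(\|\phi^{1/2}\bq_h^{n+1}\|^2 - \|\phi^{1/2}\bq_h^{n}\|^2 + \|\phi^{1/2}(\bq_h^{n+1}-\bq_h^n)\|^2\bigr) = \Delta t(\tilde\bu_h^{n+1},\bq_h^{n+1}),
\end{equation*}
with no residual $\bL$-term on the right because the permeability correction has been dropped from \eqref{eq:up_q-simpl}.

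Summing the three identities telescopes the $\phi^{-1/2}\bu_h$ and $\Delta t\,\phi^{1/2}\bq_h$ contributions and leaves $\Delta t(\bq_h^{n+1}-\bq_h^n,\tilde\bu_h^{n+1})$ on the right-hand side, which I rewrite as $\Delta t(\bq_h^{n+1}-\bq_h^n,\tilde\bu_h^{n+1}-\bu_h^n)$ using \eqref{q-orth} and bound by Young's inequality \eqref{young} with parameter $\epsilon = 1$ as
\begin{equation*}
\tfrac{\Delta t^2}{2}\|\phi^{1/2}(\bq_h^{n+1}-\bq_h^n)\|^2 + \tfrac12\|\phi^{-1/2}(\tilde\bu_h^{n+1}-\bu_h^n)\|^2.
\end{equation*}
Both of these terms are absorbed exactly by matching terms on the left-hand side. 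Summing over $n = 0,\dots,N-1$ and multiplying by $2$ then delivers \eqref{eq:stab-simpl} directly, with no Gronwall step required. The only step that demands a little care is verifying that the orthogonality in the first paragraph still holds without the permeability correction, but once that is in place the rest is essentially the mechanical analogue of the proof of Theorem~\ref{thm:stab}, cleaner because the Brinkman drag no longer couples the velocity correction to the pressure gradient.
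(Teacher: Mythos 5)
Your proposal is correct and follows exactly the route the paper indicates: the paper omits this proof, stating it is the proof of Theorem~\ref{thm:stab} with a simplified treatment of the Brinkman terms, and your argument reproduces that (same orthogonality lemma, same three test choices, with the Young-inequality terms absorbed exactly so that no Gronwall step is needed and the sharp constants in \eqref{eq:stab-simpl} emerge after multiplying by two). The bookkeeping, including the initialization argument giving $\bu_h^0 \in \bV_{h,0}^0$ needed to shift $\tilde\bu_h^{n+1}$ to $\tilde\bu_h^{n+1}-\bu_h^n$, is sound.
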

	
\subsection{Analysis of the time discretization error}
We proceed with deriving a bound on the time discretization error. To this end, the semi-discrete continuous-in-space formulation of the method
\eqref{eq:Predproblem-0}--\eqref{eq:up_q-0} is considered, which is stated below.
	
	\begin{itemize}
		
		\item Initialization step: Let $\bu^0 = \bu_0$, $\Psi^0 = \Psi_0$, $\bq^0 = \nabla \Psi_0$.
		
		For \(0 \le n \le N-1\):
		
		\item  Predictor problem: Find $\bsi^{n+1} \in (\bV)^d$  and \(\tilde{\mathbf{u}}^{n+1} \in (W)^d\) s.t.
		\begin{subequations} \label{eq:varPredproblem}
			\begin{align}
				&\left( \frac{\phi}{\nu} \bsi^{n+1}, \boldsymbol{\tau} \right)
				- \left( \tilde{\mathbf{u}}^{n+1}, \nabla \cdot \boldsymbol{\tau} \right) = 0
				\quad
				\forall \boldsymbol{\tau} \in (\mathbf{V})^d, \label{var1}\\
				&\left( \frac{1}{\phi}\frac{\tilde{\mathbf{u}}^{n+1} - \mathbf{u}^n}{\Delta t}, \boldsymbol{\xi} \right)
				+ \left( \nabla \cdot \bsi^{n+1}, \boldsymbol{\xi}\right)
				+ \left( \mathbf{q}^n, \boldsymbol{\xi}\right)
				+ \left(\bL\tilde\bu^{n+1},\bxi\right)
				= 0 \quad \forall \boldsymbol{\xi} \in (W)^d. \label{var2}
			\end{align}
		\end{subequations}
		
		\item Projection problem: Find $\mathbf{u}^{n+1} \in \mathbf{V}_0$
		and \(\Psi^{n+1} \in W_0\) s.t.
		\begin{subequations} \label{eq:varProjecproblem}
			\begin{align}
				&  \bigg( \frac{1}{\phi} \frac{\mathbf{u}^{n+1}-\tilde{\mathbf{u}}^{n+1}}{\Delta t}, \mathbf{v}\bigg)
				+ \left( \mathbf{L} \left(\mathbf{u}^{n+1}-\tilde{\mathbf{u}}^{n+1}\right), \mathbf{v}\right)
				- ( \Psi^{n+1} - \Psi^n,\nabla \cdot \mathbf{v} ) = 0
				\quad \forall \mathbf{v} \in \mathbf{V}_{0}, \label{eq:varProjecproblem_1} \\
				& \left( \nabla \cdot \mathbf{u}^{n+1},w\right) = 0 \quad \forall w \in W_0.
				\label{eq:varProjecproblem_2} 
			\end{align}
		\end{subequations}
		
		\item Update the pressure gradient: Find \(\mathbf{q}^{n+1}\in (W)^d\) s.t.
		\begin{gather} 
			\left( \mathbf{q}^{n+1}, \bxi \right) =  \left( \mathbf{q}^n, \bxi \right)
			- \left( \frac{1}{\phi}\frac{\mathbf{u}^{n+1}-\tilde{\mathbf{u}}^{n+1} }{\Delta t} ,\bxi \right)
			- \left(\mathbf{L} \left(\mathbf{u}^{n+1} - \tilde{\mathbf{u}}^{n+1}\right), \bxi \right)
			\quad \forall \bxi \in (W)^d. \label{eq:var-up_q}
		\end{gather}
		
	\end{itemize}
	
	As noted in Remark~\ref{rem:div-free}, \eqref{eq:varProjecproblem_2} implies that
	\begin{equation}\label{div-free-0}
		\nabla \cdot \mathbf{u}^{n+1} = 0.
	\end{equation}
	
On the other hand, the solution to the model problem \eqref{eq:governing_Eqq} with boundary condition $\bu = 0$ on $\Gamma$ satisfies, for \(0 \le n \le N-1\),
	\begin{align}
		&\left( \frac{\phi}{\nu} \boldsymbol{\sigma}(t_{n+1}), \boldsymbol{\tau} \right)
		- \left( \mathbf{u}(t_{n+1}), \nabla \cdot \boldsymbol{\tau} \right) = 0
		\quad \forall \boldsymbol{\tau} \in (\mathbf{V})^d, \label{true-1}\\
		& \left( \frac{1}{\phi}\frac{\mathbf{u}(t_{n+1}) - \mathbf{u}(t_n)}{\Delta t}, \boldsymbol{\xi} \right)
		+ \left( \nabla \cdot \boldsymbol{\sigma}(t_{n+1}), \boldsymbol{\xi}\right)
		+ \left( \nabla \Psi(t_{n+1}), \boldsymbol{\xi}\right)
		+ (\bL\bu(t_{n+1}),\bxi) \nonumber \\
		& \qquad = (T_{n+1}(\bu),\bxi) \quad \forall \boldsymbol{\xi} \in (W)^d, \label{true-2}\\
		& \nabla\cdot \bu(t_{n+1}) = 0, \label{true-3}
	\end{align}
	where
	$$
	T_{n+1}(\bu) = \frac{\mathbf{u}(t_{n+1}) - \mathbf{u}(t_n)}{\Delta t} - \frac{\partial \bu}{\partial t}(t_{n+1}).
	$$

	\begin{theorem}\label{thm:time-err}
Assuming that the true solution is sufficiently smooth in time, there exists a constant $C$ independent of $\Delta t$ such that the solution to the semi-discrete method \eqref{eq:varPredproblem}--\eqref{eq:var-up_q} satisfies
\begin{align}
			& \Delta t \sum_{n=0}^{N-1}\nu^{-1}\|\phi^{1/2}(\bsi(t_{n+1}) - \bsi^{n+1})\|^2 + \|\phi^{-1/2}(\bu(t_{N}) - \bu^N)\|^2
			+ \Delta t^2\|\phi^{1/2}(\nabla \Psi(t_N) - \bq^N)\|^2
			\nonumber \\
			& \qquad
			+ \Delta t \sum_{n=0}^{N-1}\left(\|\bL^{1/2}(\bu(t_{n+1}) - \bu^{n+1})\| + \|\bL^{1/2}(\bu(t_{n+1}) - \tilde\bu^{n+1})\|  \right)
			\le C \Delta t^2.\label{err-bound}
		\end{align}
		
	\end{theorem}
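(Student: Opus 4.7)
The plan is to mimic the stability argument of Theorem~\ref{thm:stab} applied to the error equations obtained by subtracting \eqref{eq:varPredproblem}--\eqref{eq:var-up_q} from \eqref{true-1}--\eqref{true-3}. Introducing the error quantities
$\esi^{n+1} = \bsi(t_{n+1}) - \bsi^{n+1}$,
$\etu^{n+1} = \bu(t_{n+1}) - \tilde\bu^{n+1}$,
$\eu^{n+1} = \bu(t_{n+1}) - \bu^{n+1}$, and
$\eq^{n+1} = \nabla\Psi(t_{n+1}) - \bq^{n+1}$,
the subtraction yields equations with the same algebraic structure as the semi-discrete scheme, augmented by two consistency contributions: the backward Euler truncation $T_{n+1}(\bu)$ on the right-hand side of the predictor momentum equation, and the pressure-gradient increment $R^{n+1} := \nabla\Psi(t_{n+1}) - \nabla\Psi(t_n)$, which appears because the predictor uses $\bq^n$ while the exact equation carries $\nabla\Psi(t_{n+1})$. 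Under the assumed temporal smoothness of $\bu$ and $\Psi$, standard Taylor expansions yield $\|T_{n+1}(\bu)\|,\,\|R^{n+1}\| \le C\Delta t$, so that $\Delta t\sum_{n=0}^{N-1}(\|T_{n+1}(\bu)\|^2 + \|R^{n+1}\|^2) = O(\Delta t^2)$.

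Next I would establish the orthogonality analogue of \eqref{q-orth}, namely $(\eq^{n+1},\bv) = 0$ for every $\bv \in \bV_0^0$, by induction on $n$. The base case uses $\bq^0 = \nabla\Psi_0$, giving $\eq^0 = 0$. For the inductive step, testing the error version of \eqref{eq:var-up_q} with $\bv \in \bV_0^0$ and integrating $(R^{n+1},\bv)$ by parts reduces it to $-(\Psi(t_{n+1})-\Psi(t_n),\nabla\cdot\bv) = 0$, while testing the error version of \eqref{eq:varProjecproblem_1} with the same $\bv$ annihilates the remaining $\bu$/$\tilde\bu$ correction terms since the pressure-jump pairing vanishes; this leaves $(\eq^{n+1},\bv) = (\eq^n,\bv)$ and closes the induction.

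With this orthogonality in hand, the energy estimate follows the exact template of \eqref{eq:stab-1}--\eqref{eq:stab-4}. I would test the predictor error equations with $(\btau,\bxi) = (\esi^{n+1},\etu^{n+1})$, the projection error equation with $\bv = \eu^{n+1}$ (using $\nabla\cdot\eu^{n+1} = 0$ to kill the pressure-jump term), and the pressure-gradient error equation with $\bxi = \phi\,\eq^{n+1}$. Adding these identities, scaling by $\Delta t$ and $\Delta t^2$ respectively, and invoking the algebraic identity \eqref{a-bb} produces the telescoping pair $\|\phi^{-1/2}\eu^{n+1}\|^2 - \|\phi^{-1/2}\eu^n\|^2$ and $\Delta t^2(\|\phi^{1/2}\eq^{n+1}\|^2 - \|\phi^{1/2}\eq^n\|^2)$, together with nonnegative jump terms, the coercive $\Delta t\,\nu^{-1}\|\phi^{1/2}\esi^{n+1}\|^2$, and $\bL$-weighted coercive contributions on $\etu^{n+1}$ and $\eu^{n+1}$, exactly matching the left-hand side of \eqref{eq:stab-4}.

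The right-hand side then carries only the consistency pairings $\Delta t\,(T_{n+1}(\bu),\etu^{n+1})$ and $-\Delta t\,(R^{n+1},\etu^{n+1})$ from the predictor, $\Delta t^2(\phi R^{n+1},\eq^{n+1})$ from the pressure-gradient update, and the stability-type pairings already treated in \eqref{eq:stab-5}--\eqref{eq:stab-6}. Invoking the orthogonality, I would replace $\etu^{n+1}$ by $\etu^{n+1} - \eu^n$ in every pairing against $\eq^{n+1} - \eq^n$, and then apply Young's inequality \eqref{young} to split each consistency term into a small fraction of an LHS quantity (one of the jumps $\|\phi^{-1/2}(\etu^{n+1}-\eu^n)\|^2$, $\|\bL^{1/2}(\eu^{n+1}-\etu^{n+1})\|^2$, or of $\Delta t^2\|\phi^{1/2}\eq^{n+1}\|^2$) plus a $\Delta t^2$-scaled data term. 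Summing $n = 0,\ldots,N-1$ and applying the discrete Gronwall inequality as in the last line of the proof of Theorem~\ref{thm:stab} absorbs the residual $\Delta t^2\|\phi^{1/2}\eq^{n+1}\|^2$ contributions and yields \eqref{err-bound}. The main technical obstacle is the dual appearance of $R^{n+1}$ in both the predictor and the pressure-gradient error equations: a naive estimate would lose a power of $\Delta t$, but the re-pairing enabled by the orthogonality of $\eq^{n+1}$ against divergence-free test functions recovers the correct $O(\Delta t^2)$ order.
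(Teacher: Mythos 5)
Your proposal is correct and follows essentially the same route as the paper's proof: the same error equations with the truncation term $T_{n+1}(\bu)$ and the pressure-gradient increment (the paper's $S_{n+1}(\nabla\Psi)$, your $R^{n+1}$), the same test functions $(\esi^{n+1},\etu^{n+1})$, $\eu^{n+1}$, $\phi\eq^{n+1}$, the same orthogonality of $\eq^{n}$ against divergence-free test functions to re-pair the $\eq^{n+1}-\eq^n$ term, and Young plus discrete Gronwall at the end. The only elided detail is how $\etu^{n+1}$ is controlled in the pairings with $T_{n+1}(\bu)$ and $R^{n+1}$ (the paper splits $\etu^{n+1}$ as $(\etu^{n+1}-\eu^{n+1})+\eu^{n+1}$, uses $(S_{n+1}(\nabla\Psi),\eu^{n+1})=0$ for the pressure increment, and absorbs the resulting $\Delta t\,\|\phi^{-1/2}\eu^{n+1}\|^2$ term via the same Gronwall step); your sketch would need this extra Gronwall contribution on $\eu$, not only on the $\Delta t^2\|\phi^{1/2}\eq^{n+1}\|^2$ terms.
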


	\begin{proof}
		Let $\esi^{n} = \bsi(t_{n}) - \bsi^{n}$, $\etu^n = \bu(t_{n}) - \tilde\bu^n$, $\eu^n = \bu(t_{n}) - \bu^n$, and $\eq^n = \nabla \Psi(t_n) - \bq^n$.
		Subtracting \eqref{var1}--\eqref{var2}, \eqref{div-free-0} from \eqref{true-1}--\eqref{true-3} results in the error equations
		\begin{align}
			&\left( \frac{\phi}{\nu} \esi^{n+1}, \boldsymbol{\tau} \right)
			- \left( \etu^{n+1}, \nabla \cdot \boldsymbol{\tau} \right) = 0
			\quad \forall \boldsymbol{\tau} \in (\mathbf{V})^d, \label{err-1}\\
			& \left( \frac{1}{\phi}\frac{\etu^{n+1} - \eu^n}{\Delta t}, \boldsymbol{\xi} \right)
			+ \left( \nabla \cdot \esi^{n+1}, \boldsymbol{\xi}\right)
			+ \left( \eq^{n}, \boldsymbol{\xi}\right)
			+ (\bL\etu^{n+1},\bxi) \nonumber \\
			& \qquad = (T_{n+1}(\bu),\bxi) - (S_{n+1}(\nabla\Psi),\bxi) \quad \forall \boldsymbol{\xi} \in (W)^d, \label{err-2}\\
			& \nabla\cdot\eu^{n+1} = 0, \label{err-3}
		\end{align}
		where
		$$
		S_{n+1}(\nabla\Psi) = \nabla\Psi(t_{n+1}) - \nabla\Psi(t_{n}).
		$$
		Taking $(\btau,\bxi) = (\esi^{n+1},\etu^{n+1})$ in \eqref{err-1}--\eqref{err-2}, combining the equations, using \eqref{a-bb}, and multiplying by $\Delta t$, we get
		\begin{align}
			& \Delta t \, \nu^{-1}\|\phi^{1/2}\esi^{n+1}\|^2
			+ \frac12\|\phi^{-1/2}\etu^{n+1}\|^2 - \frac12\|\phi^{-1/2}\eu^{n}\|^2
			+ \frac12\|\phi^{-1/2}(\etu^{n+1} - \eu^{n})\|^2
			\nonumber \\
			& \qquad
			+ \Delta t \, (\eq^n,\etu^{n+1})
			+ \|\bL^{1/2}\etu^{n+1}\|^2
			= \Delta t(T_{n+1}(\bu),\etu^{n+1}) - \Delta t(S_{n+1}(\nabla\Psi),\etu^{n+1}). \label{eq:err-1}
		\end{align}
Subtracting and adding $\bu(t_{n+1})$ in \eqref{eq:varProjecproblem_1} and taking $\bv = \eu^{n+1}$, we obtain
		$$
		(\phi^{-1}(\eu^{n+1} - \etu^{n+1}),\eu^{n+1}) + \Delta t\left(\bL(\eu^{n+1} - \etu^{n+1}),\eu^{n+1}\right) = \Delta t (\Psi^{n+1} - \Psi^n,\nabla\cdot\eu^{n+1}),
		$$
		which implies, using \eqref{err-3} and \eqref{a-bb},
		\begin{align}
			& \frac12\|\phi^{-1/2}\eu^{n+1}\|^2 - \frac12\|\phi^{-1/2}\etu^{n+1}\|^2
			+ \frac12\|\phi^{-1/2}(\eu^{n+1} - \etu^{n+1})\|^2 \nonumber \\
			& \qquad
			+ \frac{\Delta t}{2}\|\bL^{1/2}\eu^{n+1}\|^2 - \frac{\Delta t}{2}\|\bL^{1/2}\etu^{n+1}\|^2
			+ \frac{\Delta t}{2}\|\bL^{1/2}(\eu^{n+1} - \etu^{n+1})\|^2\label{eq:err-2}
			= 0.
		\end{align}
We subtract and add $\nabla \Psi(t_{n+1})$, $\nabla \Psi(t_{n})$, and $\bu(t_{n+1})$
in \eqref{eq:var-up_q}, multiply by $\Delta t^2$, and take $\bxi = \phi\eq^{n+1}$, obtaining
		\begin{align}
			& \Delta t^2(\eq^{n+1} - \eq^n,\phi\eq^{n+1})
			+ \Delta t \left(\eu^{n+1} - \etu^{n+1},\eq^{n+1}\right)
			+ \Delta t^2\left(\bL(\eu^{n+1}-\etu^{n+1}),\phi\eq^{n+1}\right) \nonumber \\
			& \qquad = \Delta t^2(S_{n+1}(\nabla\Psi),\phi\eq^{n+1}). \label{eq:err-3a}
		\end{align}
We note that the argument for \eqref{q-orth} implies
		$$
		(\bq^{n},\bv) = 0 \quad \forall \bv \in \bV_{0}^0.
		$$
Combining the above equation with
		\begin{equation}\label{grad-orth}
			(\nabla \Psi,\bv) = -(\Psi,\nabla\cdot\bv) + \<\Psi,\bv\cdot\bn\>_{\Gamma} = 0 \quad \forall \bv \in \bV_{0}^0,
		\end{equation}
		implies
		\begin{equation}\label{q-orth-err}
			(\eq^{n},\bv) = 0 \quad \forall \bv \in \bV_{0}^0.
		\end{equation}
		Therefore, using that $\eu^{n+1} \in \bV_{0}^0$, cf. \eqref{err-3}, as well as \eqref{a-bb}, \eqref{eq:err-3a} results in
		\begin{align}
			&  \frac{\Delta t^2}{2}\left(\|\phi^{1/2}\eq^{n+1}\|^2 - \|\phi^{1/2}\eq^{n}\|^2
			+ \|\phi^{1/2}(\eq^{n+1} - \eq^n)\|^2\right)
			- \Delta t (\etu^{n+1},\eq^{n+1})
			\nonumber \\
			& \qquad
			+ \Delta t^2\left(\bL(\eu^{n+1}-\etu^{n+1}),\phi\eq^{n+1}\right)
			= \Delta t^2(S_{n+1}(\nabla\Psi),\phi\eq^{n+1})\label{eq:err-3}.
		\end{align}
		The next step is to sum \eqref{eq:err-1}, \eqref{eq:err-2}, and \eqref{eq:err-3}. For the sum of the next-to-last terms on l.h.s. of \eqref{eq:err-1} and \eqref{eq:err-3}, using \eqref{q-orth-err}, we write
		\begin{equation}\label{eq:err-4}
			-(\etu^{n+1},\eq^{n+1} - \eq^n) = (\eu^{n} - \etu^{n+1},\eq^{n+1} - \eq^n).
		\end{equation}
		Summing \eqref{eq:err-1}, \eqref{eq:err-2}, and \eqref{eq:err-3} and using \eqref{eq:err-4}, we obtain
		\begin{align}
			& \Delta t \, \nu^{-1}\|\phi^{1/2}\esi^{n+1}\|^2 + \frac12\Big(\|\phi^{-1/2}\eu^{n+1}\|^2
			- \|\phi^{-1/2}\eu^{n}\|^2
			+ \|\phi^{-1/2}(\etu^{n+1} - \eu^{n})\|^2
			\nonumber \\
			& \qquad
			+ \|\phi^{-1/2}(\eu^{n+1} - \etu^{n+1})\|^2 \Big)
			+ \frac{\Delta t}{2}\|\bL^{1/2}\eu^{n+1}\|^2
			+ \frac{\Delta t}{2}\|\bL^{1/2}\etu^{n+1}\|^2
			+ \frac{\Delta t}{2}\|\bL^{1/2}(\eu^{n+1} - \etu^{n+1})\|^2
			\nonumber \\
			& \qquad + \frac{\Delta t^2}{2}\left(\|\phi^{1/2}\eq^{n+1}\|^2 - \|\phi^{1/2}\eq^{n}\|^2
			+ \|\phi^{1/2}(\eq^{n+1} - \eq^n)\|^2\right)
			\nonumber \\
			&\quad = - \Delta t (\eu^{n} - \etu^{n+1},\eq^{n+1} - \eq^n)
			- \Delta t^2\left(\bL(\eu^{n+1}-\etu^{n+1}),\phi\eq^{n+1}\right)
			+ \Delta t(T_{n+1}(\bu),\etu^{n+1})
			\nonumber \\
			&\qquad \,
			- \Delta t(S_{n+1}(\nabla\Psi),\etu^{n+1})
			+ \Delta t^2(S_{n+1}(\nabla\Psi),\phi\eq^{n+1})
			=: I_1 + I_2 + I_3 + I_4 + I_5. \label{eq:err-5}
		\end{align}
We next bound the four terms on the r.h.s.. For $I_1$, using \eqref{young}, we have
		\begin{equation}\label{I1}
			|I_1| \le \frac12\|\phi^{-1/2}(\eu^{n} - \etu^{n+1})\|^2
			+ \frac{\Delta t^2}{2}\|\phi^{1/2}(\eq^{n+1} - \eq^n)\|^2.
		\end{equation}
		Using \eqref{young} for $I_2$ gives
		\begin{equation}\label{I2}
			|I_2| \le \frac{\Delta t}{2}\|\bL^{1/2}(\eu^{n+1} - \etu^{n+1})\|^2
			+ \frac{\Delta t^3 (\phi\bL)_{\max}}{2}\|\phi^{1/2}\eq^{n+1}\|^2.
		\end{equation}
		For $I_3$, using \eqref{young}, we write
		\begin{align}
			|I_3| & = |\Delta t(T_{n+1}(\bu),\etu^{n+1} - \eu^{n+1}) + \Delta t(T_{n+1}(\bu),\eu^{n+1})| \nonumber \\
			& \le \Delta t^2\|\phi^{1/2}T_{n+1}(\bu)\|^2
			+ \frac14\|\phi^{-1/2}(\etu^{n+1} - \eu^{n+1})\|^2
			+ \Delta t\|\phi^{1/2}T_{n+1}(\bu)\|^2
			+ \frac{\Delta t}{4}\|\phi^{-1/2}\eu^{n+1}\|^2.\label{I3}
		\end{align}
		For $I_4$, using \eqref{grad-orth} and \eqref{err-3}, we have that $(S_{n+1}(\nabla\Psi),\eu^{n+1}) = 0$. Then, using \eqref{young}, we obtain
		\begin{equation}\label{I4}
			|I_4| = |\Delta t(S_{n+1}(\nabla\Psi),\etu^{n+1} - \eu^{n+1})| \le \Delta t^2 \|\phi^{1/2}S_{n+1}(\nabla\Psi)\|^2 + \frac14 \|\phi^{-1/2}(\etu^{n+1} - \eu^{n+1})\|^2.
		\end{equation}
		For $I_5$, the use of \eqref{young}  gives
		\begin{equation}\label{I5}
			|I_5| \le \Delta t \|\phi^{1/2}S_{n+1}(\nabla\Psi)\|^2 + \frac{\Delta t^3}{4}\|\phi^{1/2}\eq^{n+1}\|^2.
		\end{equation}
		Combining \eqref{eq:err-5}--\eqref{I4} gives
		\begin{align}
			& \Delta t \, \nu^{-1}\|\phi^{1/2}\esi^{n+1}\|^2
			+ \frac12\left(\|\phi^{-1/2}\eu^{n+1}\|^2 - \|\phi^{-1/2}\eu^{n}\|^2\right)
			+ \frac{\Delta t}{2}\|\bL^{1/2}\eu^{n+1}\|^2
			+ \frac{\Delta t}{2}\|\bL^{1/2}\etu^{n+1}\|^2 \nonumber \\
			& \qquad
			+ \frac{\Delta t^2}{2}\left(\|\phi^{1/2}\eq^{n+1}\|^2 - \|\phi^{1/2}\eq^{n}\|^2\right)
			\nonumber \\
			&\quad \le \left(\Delta t^2 + \Delta t\right)\|\phi^{1/2}T_{n+1}(\bu)\|^2
			+ \left(\Delta t^2 + \Delta t \right)\|\phi^{1/2}S_{n+1}(\nabla\Psi)\|^2 \nonumber \\
			& \qquad
			+ \frac{\Delta t}{4}\|\phi^{-1/2}\eu^{n+1}\|^2
			+ \frac{\Delta t^3 (\phi\bL)_{\max}}{2}\|\phi^{1/2}\eq^{n+1}\|^2
			+ \frac{\Delta t^3}{4}\|\phi^{1/2}\eq^{n+1}\|^2. \label{eq:err-6}
		\end{align}
Under the assumption that the solution is sufficiently smooth in time, one can easily prove for the time discretization and splitting errors,  that $\|T_{n+1}(\bu)\| \le C\Delta t$ and $\|S_{n+1}(\nabla\Psi)\| \le C \Delta t$. Then, summing \eqref{eq:err-6} over $n$ from 0 to $N-1$, using that $\eu^0 = 0$ and $\eq^0 = 0$, and, under the assumption $\Delta t \le \min\{1,1/(4(\phi\bL)_{\max})\}$, applying the discrete Gronwall inequality for the last three terms in \eqref{eq:err-6}, we obtain \eqref{err-bound}.
	\end{proof}
	
\section{Implementation of a second order multipoint flux MFE method} \label{implem}
In this section we present the details of the implementation of a second order version of the MFE projection method introduced in \cref{ge,numalg}. We consider \(\mathbf{V}_h \times W_h \) on \(T_h\) (see \cref{discr}) to be the Raviart--Thomas pair of spaces $RT_1$ \cite{R-T} on triangular or tetrahedral grids. We apply the MFMFE methodology originally proposed in \cite{W-Y} as a first order scheme, and further extended in \cite{Radu} to the second order by using the $RT_1$ spaces.
Using a quadrature rule whose nodes are associated with the degrees of freedom (\DOFs) of the \(RT_1\) space $\bV_h$, we get mass lumping in both the \PP\ and \PjP.
	In the \PP, the viscous stress $\bsi_h^{n+1}$  is locally eliminated and a \spd\ system is solved for each component of the velocity $\tilde{\mathbf{u}}_h^{n+1}$. In the \PjP, the corrected velocity \(\bu_h^{n+1}\) is locally eliminated and a \spd\ system is solved for the pressure \(\Psi_h^{n+1}\). A local post-processing easily allows to recover the viscous stress and corrected velocity. \par
	
	The proposed method results in a very efficient computational algorithm. As we show in this section, the solution of \(\left(d+1\right)\) \spd\ linear systems is required at each time step: \(d\) systems for the calculation of the predicted velocity components in the \PP\ and one system for the pressure in the \PjP. Each system involves \(\left(d+1\right)\) unknowns per simplex, with a significant reduction in the number of unknowns compared to the original saddle point systems. The velocity obtained at the end of each time iteration is second order accurate, $H(\mbox{div})$-conforming, pointwise divergence-free, and linear on each simplex of the grid. \par

	\subsection{The $RT_1$ mixed finite element spaces}
	Let \(\hat{E}\) be the reference simplex with vertices $\hat\br_1 = (0,0)^T$, $\hat\br_2 = (1,0)^T$, $\hat\br_3 = (0,1)^T$ if $d=2$ and $\hat\br_1 = (0,0,0)^T$, $\hat\br_2 = (1,0,0)^T$, $\hat\br_3 = (0,1,0)^T$, $\hat\br_4 = (0,0,1)^T$ if $d=3$. For any physical simplex \(E\in T_h\) with vertices \(\mathbf{r}_i\), $i = 1,\ldots,d+1$, such that if \(d = 2\), \(\mathbf{r}_i\) are anti-clockwise oriented, or if \(d = 3\), \(\mathbf{r}_i\), $i = 1,2,3$ are anti-clockwise oriented with respect to \(\mathbf{r}_4\), see \cref{basis_functs}. A multilinear mapping \(F_E : \hat{E}\rightarrow E \) exists such that
	\begin{equation} \label{mapping}
		F_E =  \mathbf{r}_1  \left(1 - \hat{x} - \hat{y} \right) + \mathbf{r}_2  \hat{x} + \mathbf{r}_3  \hat{y}  \textnormal{ if }  d=2, \quad F_E =  \mathbf{r}_1  \left(1 - \hat{x} - \hat{y} - \hat{z} \right) + \mathbf{r}_2  \hat{x} + \mathbf{r}_3  \hat{y} + \mathbf{r}_4  \hat{z}  \textnormal{ if }  d=3.
	\end{equation}  
	The \(RT_1\) spaces are defined on the reference simplex \(\hat{E}\) as
	\begin{equation*}
		\mathbf{V}(\hat{E}) = \big(P_1(\hat{E})\big)^d + \mathbf{x}P_1(\hat{E}), \quad
		W(\hat{E}) = P_1(\hat{E}),
	\end{equation*}
	where \(P_k(\hat{E})\) is the space of multivariate polynomials of degree \(\le k\) on \(\hat{E}\). The dimension of $\mathbf{V}(\hat E)$ is 8 if \(d = 2\) and 15 if \(d = 3\). Let \(\hat{\mathbf{r}}_{d+2}\) be the center of mass of \(\hat{E}\). We consider \(d\) degrees of freedom (\DOFs) associated with each face \(\hat e\), which are the normal components at the vertices of \(\hat e\), 
	and \(d\) \DOFs\ associated with the center of mass.
	Let \(\hat{\mathbf{n}}_{k,l}\), with \(k = 1, \dots, \left(d+1\right)\) and \(l = 1, \dots, d\), be the unit outward normal vectors to the \(d\) faces sharing vertex \(\hat{\mathbf{r}}_k\). The \(d\) vectors  \(\hat{\mathbf{n}}_{d+2,l}\) associated with the midpoint of \(\hat{E}\) are \(\hat{\mathbf{n}}_{d+2,1} = \left(1 \ 0 \right)^T\) and \(\hat{\mathbf{n}}_{d+2,2} = \left(0 \ 1 \right)^T\), if \(d=2\), or \(\hat{\mathbf{n}}_{d+2,1} = \left(1 \ 0 \ 0\right)^T\), \(\hat{\mathbf{n}}_{d+2,2} = \left(0 \ 1 \ 0\right)^T\) and \(\hat{\mathbf{n}}_{d+2,3} = \left(0 \ 0 \ 1\right)^T\), if \(d=3\).
	For each point \(\hat{\mathbf{r}}_i\), \(i = 1, \dots, \left(d+2\right)\), the \(d\) associated \(RT_1\) basis functions \(\hat{\mathbf{v}}_{i,j}\), \(j = 1, \dots, d\), are defined as 
	\begin{equation*} 
		\hat{\mathbf{v}}_{i,j}(\hat\br_k) \cdot \hat{\mathbf{n}}_{k,l} = \delta_{ik} \delta_{jl} , \qquad i,k = 1,\dots,d+2, \ \ j,l = 1,\dots,d,
	\end{equation*} 
	where \(\delta_{rs} = 1\) if \(r = s\), \(\delta_{rs} = 0\) if \(r \neq s\). See \cite[Fig. 1]{ARICO2025117616} for the \DOFs\ of \(\bV(\hat E)\) and the the basis functions in the case \(d = 2\). For \(d = 3\), the \DOFs\ of \(\bV(\hat E)\) are shown in \cref{basis_functs} and the \(x\), \(y\) and \(z\) components of the basis functions are listed in \cref{RT1_basis_fcts_3D} (see also \cite{Radu}). The \DOFs\ of the pressure space $W(\hat E)$
	are the values at any \(d+1\) points within $\hat E$, see \cref{basis_functs} for the case \(d=3\).
	\begin{figure}
		\centering
		\includegraphics[width=0.3\textwidth]{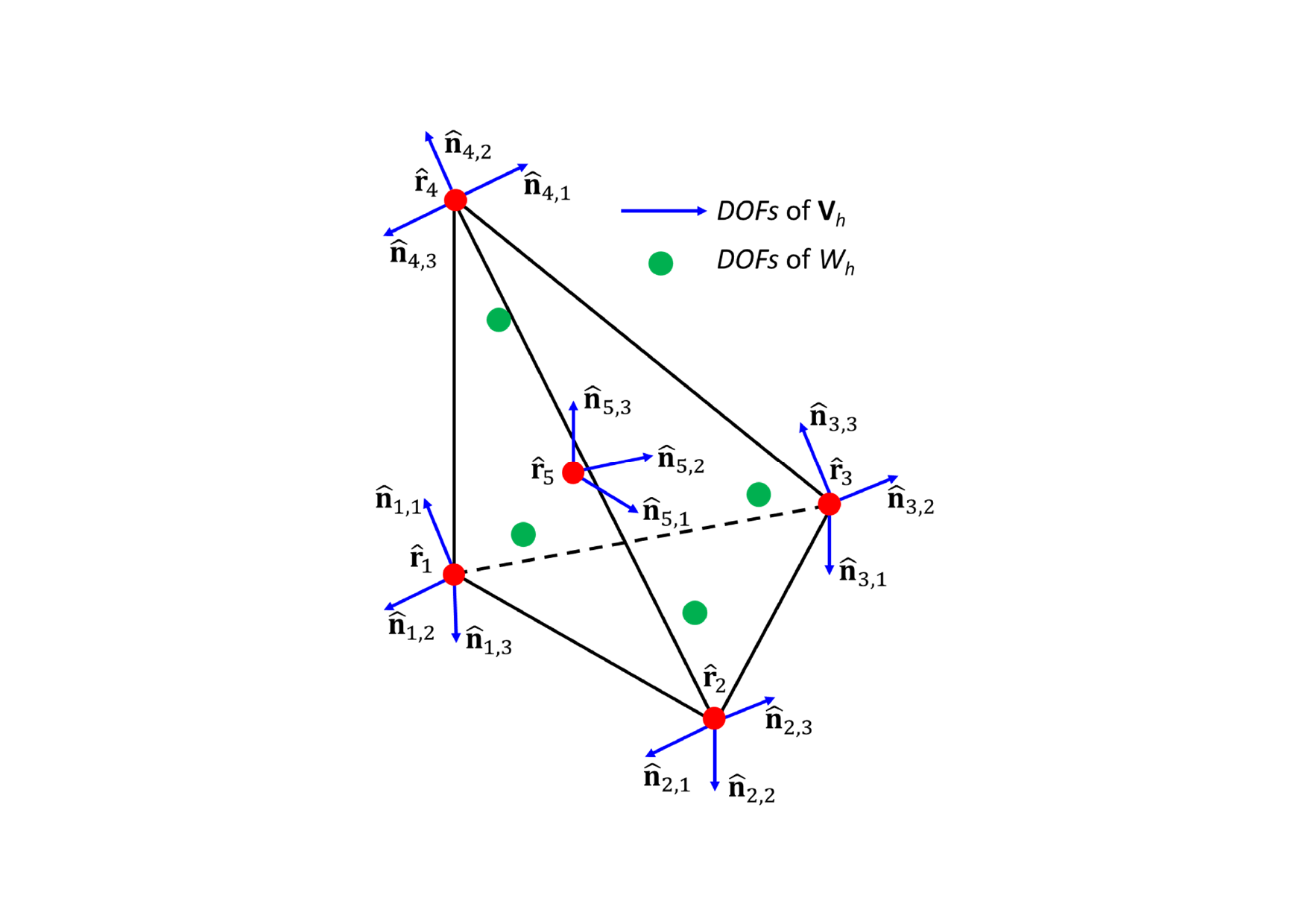}
		\caption{\DOFs\ of \(\bV(\hat{E})\) and \(W(\hat E\) for the case \(d=3\).}
		\label{basis_functs}
	\end{figure}
	We use the Piola transformation \cite{BBF} to define the \(RT_1\) velocity space in each simplex (\eg \cite{W-Y}), as well as the standard change of variables for the pressure space:
	\begin{subequations} \label{eq:transf}
		\begin{gather}
			\mathbf{v} \longleftrightarrow  \mathbf{\hat{v}} : \mathbf{v} = \frac{1}{\left| \mathbf{J}_E\right|} \mathbf{J}_E \mathbf{\hat{v}}, \quad \mathbf{J}_E = \left[\mathbf{r}_{21}, \mathbf{r}_{31} \right] \ \textnormal{if} \ d=2, \quad \mathbf{J}_E = \left[\mathbf{r}_{21}, \mathbf{r}_{31},\mathbf{r}_{41}\right] \ \textnormal{if} \ d=3,  \label{eq:Piola} \\
			w \longleftrightarrow \hat{w} : w = \hat{w},
		\end{gather}
	\end{subequations}
	where \(\mathbf{J}_E\) is the Jacobian matrix associated with the mapping \eqref{mapping}, and \(\mathbf{r}_{ij} = \mathbf{r}_i - \mathbf{r}_j\). The determinant of \(\mathbf{J}_E\) is \(\left| \mathbf{J}_E\right|= 2 \left|E\right|\) with \(\left|E\right|\) the area of simplex \(E\), if \(d=2\), or \(\left| \mathbf{J}_E\right|= 6 \left|E\right|\) with  \(\left|E\right|\) the the volume of \(E\), if \(d=3\). Two important properties of the Piola transformation are \cite{BBF}
	\begin{equation*} 
		\nabla \cdot \mathbf{v} = \frac{1}{\left| \mathbf{J}_E\right|} \nabla_{\hat{\mathbf{x}}} \cdot \mathbf{\hat{v}},
		\quad \bv\cdot\bn_e = \frac{\left| \hat{e}\right|}{\left| e\right|}\hat\bv\cdot\hat\bn_e,
	\end{equation*}
	where $F_E: \hat e \to e$, $\bn_e$ and $\hat\bn_e$ are the unit normal vectors on the faces $e$ and $\hat e$, respectively, and $|e|$ is the length (if \(d=2\)) or area (if \(d=3\)) of $e$. Thanks to the property of the Piola transformation to preserve the vectors normal components, we get the continuity of \(\mathbf{v} \cdot \mathbf{n}\) across any face \(e\) \cite{BBF}.\par 
	
	The \(RT_1\) spaces on $T_h$ are defined as
	\begin{gather*} 
		\mathbf{V}_h= \left\{ \mathbf{v} \in \mathbf{V} : \mathbf{v}|_E \longleftrightarrow \hat{\mathbf{v}}, \ \hat{\mathbf{v}} \in \hat{\mathbf{V}} (\hat{E}) \ \forall E\in T_h \right\},  \\
		W_h= \left\{ w \in W : w|_E \longleftrightarrow \hat{w}, \ \hat{w} \in \hat{W}(\hat{E}) \ \forall E\in T_h  \right\},
	\end{gather*}
	where \eqref{eq:transf} defines the the transformations \(\mathbf{v} \longleftrightarrow \hat{\mathbf{v}}\) and \(w \longleftrightarrow \hat{w}\).

	\begin{table} 
		\caption{The \(RT_1\) basis functions for the reference tetrahedron (\(d=3\).) (see \cite{Radu})}
		{\scriptsize	
			\centering
			\begin{tabular}{c | c c c }
				\hline
				\(\hat{\bv}_{1,1}\) & \(3\hat{x}+\hat{y}+\hat{z} -1 -2\hat{x}^2 -\hat{y}\hat{x}-\hat{z}\hat{x}\) &	\( -2 \hat{x}\hat{y} -\hat{y}^2 +\hat{y} -\hat{z}\hat{y} \) &	\( -2 \hat{x}\hat{z} -\hat{z}\hat{y} -\hat{z}^2 +\hat{z} \)  \\
				\(\hat{\bv}_{1,2}\)& \( -\hat{x}^2 +\hat{x} -2\hat{y}\hat{x}-\hat{z}\hat{x}\) & 	\(\hat{x}+3\hat{y}+\hat{z} -1 - \hat{x}\hat{y} -2\hat{y}^2 -\hat{z}\hat{y} \) &	\( -\hat{x}\hat{z} -2\hat{z}\hat{y} -\hat{z}^2 + \hat{z}  \)  \\
				\(\hat{\bv}_{1,3}\) & \( -\hat{x}^2 +\hat{x} -\hat{y}\hat{x}-2\hat{z}\hat{x}\) & 	\( - \hat{x}\hat{y} -\hat{y}^2 + \hat{y} -2\hat{z}\hat{y} \) &	\( \hat{x}+\hat{y}+3\hat{z} -1-\hat{x}\hat{z} -\hat{z}\hat{y} -2\hat{z}^2 \)  \\
				\(\hat{\bv}_{2,1}\) & \(\hat{x}^2 -\hat{y}\hat{x} \) &\( -\hat{x}  +\hat{y}\hat{x} - \hat{y}^2 +\hat{y}\) &\( \hat{z}\hat{x} - \hat{y}\hat{z}\)  \\
				\( \hat{\bv}_{2,2} \)& \(\hat{x}^2 -\hat{z}\hat{x} \) &\( \hat{y}\hat{x} - \hat{z}\hat{y}\) &\(-\hat{x} + \hat{z}\hat{x} - \hat{z}^2+\hat{z}\)  \\
				\(\hat{\bv}_{2,3}\) & \(\sqrt{3} \left(2\hat{x}^2 -\hat{x} +\hat{y}\hat{x}+\hat{z}\hat{x}\right)\) &	\(\sqrt{3} \left(2 \hat{x}\hat{y} +\hat{y}^2 -\hat{y} +\hat{z}\hat{y}\right) \) &	\(\sqrt{3} \left(2 \hat{x}\hat{z} +\hat{z}\hat{y} +\hat{z}^2 -\hat{z}\right) \)  \\
				\(\hat{\bv}_{3,1}\) & \(\hat{x}\hat{y} - \hat{x}\hat{z}\) &	\(\hat{y}^2 - \hat{z}\hat{y}\) &\(- \hat{y} + \hat{y}\hat{z} -\hat{z}^2 +\hat{z} \) \\
				\(\hat{\bv}_{3,2}\)& \(\sqrt{3}\left(\hat{x}^2 -\hat{x} +2\hat{x}\hat{y} +\hat{x}\hat{z}\right) \) & \( \sqrt{3} \left(\hat{x}\hat{y} + 2\hat{y}^2 -\hat{y} + \hat{y}\hat{z}\right)\) & \( \sqrt{3} \left(\hat{x}\hat{z} + \hat{y}\hat{z} + \hat{z}^2 -\hat{z}\right) \)  \\
				\(\hat{\bv}_{3,3}\) & \(-\hat{y} -\hat{x}^2 +\hat{x} +\hat{y}\hat{x}\) & \( \hat{y} - \hat{y}\hat{x} + \hat{y}^2 - \hat{y} \) &	\(-\hat{x}\hat{z} + \hat{y}\hat{z}\)\\
				\(\hat{\bv}_{4,1}\) & \( \sqrt{3} \left(\hat{x}^2 -\hat{x} + \hat{y}\hat{x} + 2 \hat{z}\hat{x}\right)\) & \( \sqrt{3} \left(\hat{y}\hat{x} + \hat{y}^2 -\hat{y} + 2\hat{z}\hat{y}\right) \) & \( \sqrt{3} \left(\hat{z}\hat{x} + \hat{z}\hat{y} + 2\hat{z}^2 -\hat{z} \right)\) \\
				\(\hat{\bv}_{4,2}\)& \(-\hat{z} -\hat{x}^2 +\hat{x} + \hat{z}\hat{x}\) & \( -\hat{y}\hat{x} + \hat{z}\hat{y}\)	 &	\(-\hat{z}\hat{x} + \hat{z}^2 \)  \\
				\(\hat{\bv}_{4,3}\) & \(-\hat{y}\hat{z} + \hat{z}\hat{x}\) & \(-\hat{z} -\hat{y}^2 +\hat{y} + \hat{y}\hat{z} \) &	\(- \hat{z}\hat{y} +\hat{z}^2 \) \\
				\(\hat{\bv}_{5,1}\) & \(-8 \left(\hat{x}^2 -\hat{x}\right) - 4 \left(\hat{x}\hat{y}\right) - 4\left(\hat{x}\hat{z}\right)\)  &	\(-8\left(\hat{x}\hat{y}\right) - 4\left(\hat{y}^2 -\hat{y}\right) - 4\left(\hat{y}\hat{z}\right)\) & \(-8\left(\hat{x}\hat{z}\right) - 4\left(\hat{z}\hat{y}\right) -4\left(\hat{z}^2 -\hat{z}\right)\)	  \\
				\(\hat{\bv}_{5,2}\)& \(-4 \left(\hat{x}^2 -\hat{x}\right) - 8 \left(\hat{x}\hat{y}\right) - 4\left(\hat{x}\hat{z}\right)\)  &	\(-4\left(\hat{x}\hat{y}\right) - 8\left(\hat{y}^2 -\hat{y}\right) - 4\left(\hat{y}\hat{z}\right)\) & \(-4\left(\hat{x}\hat{z}\right) - 8\left(\hat{z}\hat{y}\right) -4\left(\hat{z}^2 -\hat{z}\right)\)  \\
				\(\hat{\bv}_{5,3}\) & \(-4 \left(\hat{x}^2 -\hat{x}\right) - 4 \left(\hat{x}\hat{y}\right) - 8\left(\hat{x}\hat{z}\right)\)  &	\(-4\left(\hat{x}\hat{y}\right) - 4\left(\hat{y}^2 -\hat{y}\right) - 8\left(\hat{y}\hat{z}\right)\) & \(-4\left(\hat{x}\hat{z}\right) - 4\left(\hat{z}\hat{y}\right) -8\left(\hat{z}^2 -\hat{z}\right)\) \\
				\hline		
			\end{tabular}
		}
		\label{RT1_basis_fcts_3D}
	\end{table}

	\subsection{The quadrature rule}\label{MPMFEM} 
	Let \(\mathbf{s}, \mathbf{v}\) be any pair of continuous vector functions. The following second order accurate quadrature rule is defined in \cite{Radu}:
	\begin{equation} \label{eq:qr_elem}
		\left(\mathbf{s},\mathbf{v}\right)_Q = \sum_{E \in T_h} \left(\mathbf{s},\mathbf{v}\right)_{Q,E}
		= \sum_{E \in T_h} |E| \sum_{i=1}^{d+2} \omega_i \, \mathbf{s}\left(\mathbf{r}_i\right) \cdot \mathbf{v}\left(\mathbf{r}_i\right),
	\end{equation}
	where the weights are $\omega_i = \frac{1}{\left(k+1\right) \times \left(k+2\right)}$, $i = 1,\dots, d+1$, \(\omega_{d+2} = \frac{d+1}{d+2}\). If $\bv \in \bV_h$, the vertex vector \(\mathbf{v}(\mathbf{r}_i)\), $i = 1,\dots,d+1$, can be uniquely obtained from the \DOFs\ $\bv(\br_i)\cdot\bn_{i,1}, \dots, \bv(\br_i)\cdot\bn_{i,d}$, i.e., its normal components to the \(d\) faces sharing vertex \(\mathbf{r}_i\). In a similar way, the vector at the center of mass $\br_{d+2}$ can be constructed from the \DOFs\ $\bv(\br_{d+2})\cdot\bn_{d+2,1}, \dots,\bv(\br_{d+2})\cdot\bn_{d+2,d}$.
	
	The quadrature rule \eqref{eq:qr_elem} is applied to the bilinear forms \((\frac{\phi}{\nu} \bsi_h^{n+1}, \boldsymbol{\tau}_h)\) in \eqref{eq:PP1} in the \PP\ and \(\left(\frac{\mathbf{u}_h^{n+1}-\tilde{\mathbf{u}}_h^{n+1}}{\Delta t}, \mathbf{v}_h\right) + \Bigl(\mathbf{L}\left(\mathbf{u}_h^{n+1}-\tilde{\mathbf{u}}_h^{n+1}\right), \mathbf{v}_h\Bigr) \) in \eqref{eq:Projecproblem_1} in the \PjP. 
	The quadrature rule couples only the \(d\) basis functions associated with the quadrature vertex $\br_i$ \cite{W-Y,Radu}.
	This implies that the viscous stress \(\boldsymbol{\sigma}^{n+1}_{x\left(y,z\right),h}\) and the velocity $\bu_h^{n+1}$ can be locally eliminated, resulting in \spd\ systems for $\tilde\bu_{x\left(y,z\right),h}^{n+1}$ in \eqref{eq:Predproblem} and $\Psi_h^{n+1}$ in \eqref{eq:Projecproblem}, respectively.
	
In the following sections, we describe our numerical methodology for a three dimensional computational domain \(\Omega \subset \mathbf{R}^d \) with \(d = 3\).

	\subsection{Predictor Problem} \label{PP}
	The \PP\ \eqref{eq:Predproblem} is solved separately for the \(x\), \(y\) and \(z\) stress and velocity components. We denote by $\bsi_{x,h}^{n+1}$, $\bsi_{y,h}^{n+1}$ and $\bsi_{z,h}^{n+1}$ the first, second and third rows of tensor $\bsi_{h}^{n+1}$, respectively, and by $\tilde u_{x,h}^{n+1}$, $\tilde u_{y,h}^{n+1}$ and $\tilde u_{z,h}^{n+1}$ the three components of $\tilde\bu_h^{n+1}$. Similarly, $q_{x,h}^n$, $q_{y,h}^n$ and $q_{z,h}^n$ are the components of \(\mathbf{q}_h^n\), while for the BCs we set $\boldsymbol{\Sigma}_b = (\Sigma_{x,b},\Sigma_{y,b}, \Sigma_{z,b})^T$ and $\bu_b = (u_{x,b},u_{y,b},u_{z,b})^T$. In the following, our method is presented only for the \(x\)-component, with similar formulations along the other directions. \par
	
	Applying the quadrature rule \eqref{eq:qr_elem}, the $x$-component of the \PP\  \eqref{eq:Predproblem} becomes: find \(\bsi_{x,h}^{n+1} \in \mathbf{V}_h  : \bsi_{x,h}^{n+1} \cdot \mathbf{n} = Q_h^\Gamma(\Sigma_{x,b}^{n+1} - \Psi_b^n \ n_x)\) on \(\Gamma_n\) and \(\tilde{u}_{x,h}^{n+1} \in W_h\), such that
	\begin{subequations} \label{eq:Predproblem_x} 
		\begin{align}
			& \left( \frac{\phi}{\nu} \bsi_{x,h}^{n+1}, \boldsymbol{\tau}_h \right)_Q - \left( \tilde{u}_{x,h}^{n+1}, \nabla \cdot \boldsymbol{\tau}_h \right) =- \langle u_{x,b}^{n+1}, \boldsymbol{\tau}_h \cdot \mathbf{n} \rangle _{\Gamma_d} \quad \forall \boldsymbol{\tau}_h \in \mathbf{V}_{h,0,\Gamma_n}, \label{eq:Predproblem_x_1} \\
			& \left(\frac{1}{\phi} \frac{\tilde{u}_{x,h}^{n+1} - u_{x,h}^n}{\Delta t}, \xi_h\right) + \left( \nabla \cdot \bsi_{x,h}^{n+1}, \xi_h\right) + \left( q_{x,h}^n, \xi_h\right) \nonumber \\
			& \quad + \left(\left(L_{11} \tilde{u}_{x,h}^{n+1} + L_{12} \tilde u_{y,h}^n + L_{13} \tilde u_{z,h}^n\right), \xi_h\right) = 0 \quad \forall \xi_h \in W_h. \label{eq:Predproblem_x_2}
		\end{align}
	\end{subequations}
	where \(L_{ij}\), \(i,j = 1,2,3\), are the coefficients of tensor \(\mathbf{L}\) defined in \eqref{tensor_L}. We note that the $y$- and $z$-components $\tilde u_{y,h}$ and $\tilde u_{z,h}$ in the last term are evaluated at the previous time $t^n$, which allows for complete decoupling of the problems for each component.
Next, the local elimination of the viscous stress $\bsi_{x,h}^{n+1}$ is described both for the case of any internal or boundary vertex and the center of mass midpoint of any simplex.\par 
	
	\medskip
	\noindent
	\textbf{The case of any vertex.} Let \(\mathbf{r}\) be any internal vertex of \(T_h\) shared by \(\mathcal{I}\) interfaces and \(\mathcal{I}\) simplices, denoted by \(e_i\) and \(E_i\), respectively. Let \(\boldsymbol{\tau}_{i} \in \mathbf{V}_h\) be the basis functions on the faces $e_i$, and $\sigma_{x,i}^{n+1}$ be the associated \DOFs\ of $\bsi^{n+1}_{x,h}$. Thanks to the property of the quadrature rule to localize the interaction of the face \DOFs, setting 
	\(\boldsymbol{\tau}_h = \boldsymbol{\tau}_1, \ldots, \boldsymbol{\tau}_\mathcal{I}\) in \eqref{eq:Predproblem_x_1} results in 
	a \( \left(\mathcal{I} \times \mathcal{I}\right) \) local linear system for the \(\sigma^{n+1}_{{x,i}}\) unknowns, \(i=1, \dots, \mathcal{I}\):
	\begin{equation} \label{eq:discr_x_0}
		\sum_j \sigma^{n+1}_{x,j} \left(\left(\frac{\phi}{\nu}\boldsymbol{\tau}_j,\boldsymbol{\tau}_i\right)_{Q,E_a}
		+ \left(\frac{\phi}{\nu}\boldsymbol{\tau}_j,\boldsymbol{\tau}_i\right)_{Q,E_b} \right)  
		= (\tilde{u}^{n+1}_{x,h},\nabla \cdot \boldsymbol{\tau}_i)_{E_a}
		+ (\tilde{u}^{n+1}_{x,h},\nabla \cdot \boldsymbol{\tau}_i)_{E_b},
		\ i = 1, \dots, \mathcal{I},
	\end{equation}
	where \(E_a\) and \(E_b\) are the two simplices sharing face \(e_i\) and the summation on $j$ on the l.h.s. is over the faces \(e_j \left(\in E_a \cup E_b\right)\) sharing with $e_i$ the vertex associated with $\boldsymbol{\tau}_i$. The integrals
	\((\tilde{u}^{n+1}_{x,h},\nabla \cdot \boldsymbol{\tau}_h)_{E_{a(b)}}\) on the
	r.h.s. by computed by a \(\left(d+1\right)\)-point Gaussian integration rule. This quadrature rule is exact for quadratic functions, and since both \(\tilde{u}^{n+1}_{x,h} \in P_1\left(E\right)\) and \(\nabla \cdot \boldsymbol{\tau}_h \in P_1\left(E\right)\), it is exact for \((\tilde{u}^{n+1}_{x,h},\nabla \cdot \boldsymbol{\tau}_h)_{E_{a(b)}}\).
	
	The matrix-vector form of the local linear system \eqref{eq:discr_x_0} is 
	\begin{equation} \label{eq:vect_matr}
		\mathbf{A} \mathbf{\Sigma}_x = \mathbf{B}^T \mathbf{U}_x,
	\end{equation} 
	where %
	\begin{itemize}
		\item \(\mathbf{A}\) is a \(\left(\mathcal{I} \times \mathcal{I}\right)\) matrix with coefficients 
		\(A_{ij} = \left(\frac{\phi}{\nu}\boldsymbol{\tau}_j,\boldsymbol{\tau}_i\right)_Q\),
		$i,j = 1,\ldots,\mathcal{I}$, 
		
		\item \(\mathbf{\Sigma}_x\) is a \( \left(\mathcal{I} \times 1 \right) \) vector whose coefficients are the \DOFs\ \(\sigma^{n+1}_{x,i}\), \(i=1, \dots , \mathcal{I}\),
		
		\item \(\mathbf{B}^T\) is a \(\left(\mathcal{I} \times \left(\left(d+1\right) \times \mathcal{I}\right) \right)\) matrix with coefficients coming from \((\tilde{u}^{n+1}_{x,h},\nabla \cdot \boldsymbol{\tau}_i)_{E_a}\) + \((\tilde{u}^{n+1}_{x,h},\nabla \cdot \boldsymbol{\tau}_i)_{E_b}\), $i = 1,\ldots,\mathcal{I}$,
		
		\item \(\mathbf{U}_x\) is a \(\left(\left(\left(d+1\right) \times \mathcal{I}\right) \times 1\right)\) vector, whose coefficients are the \(\left(d+1\right)\) \DOFs\ of $\tilde{u}^{n+1}_{x,h}$ within each simplex $E_i$, $i = 1,\ldots,\mathcal{I}$.
	\end{itemize}
	
	The matrix \(\mathbf{A}\) is \spd\, as proved in \cite{W-Y}, which implies that the local system \eqref{eq:vect_matr} is solvable and one can  
	express the $\mathcal{I}$ \DOFs\ \(\sigma^{n+1}_{x,i}\),  \(i=1, \dots, \mathcal{I}\), sharing vertex $\br$ in terms of the $\left(\left(d+1\right) \times \mathcal{I}\right)$ \DOFs\ of \(\tilde{u}^{n+1}_{x,h}\) on the $\mathcal{I}$ simplices sharing $\br$.
	
	Let \(\mathbf{r}\) be a boundary vertex and denote by \(\mathcal{I}\) and \(\mathcal{H}\) the number of interfaces and simplices sharing \(\mathbf{r}\), respectively, where generally \(\mathcal{I} \ne \mathcal{H}\). In this case the size of matrix \(\mathbf{B}^T\) is \(\left(\mathcal{I} \times \left(\left(d+1\right) \times (\mathcal{H}\right)) \right)\) and the size of vector \(\breve{\mathbf{U}}_x\) is \(\left(\left(d+1\right) \times (\mathcal{H}) \times 1\right)\). If \(\br \in \overline\Gamma_d\), thanks to \eqref{eq:Predproblem_x_1}, we also account for the  contribution \(- \langle u^{n+1}_{x,b}, \boldsymbol{\tau}_h \cdot \mathbf{n} \rangle _{\Gamma_d}\) in the local system \eqref{eq:vect_matr}, computed by numerical integration over the face(s) \(e_i \in \Gamma_d\) sharing $\br$. The system \eqref{eq:Predproblem_x} changes as 
	\begin{equation} \label{eq:vect_matr_gamma_d}
		\mathbf{A} \mathbf{\Sigma}_x = \mathbf{B}^T \mathbf{U}_x + \mathbf{G}_{d,x} ,
	\end{equation} 
	where the size of vector \(\mathbf{G}_{d,x}\) is \(\left(\mathcal{I} \times 1 \right)\) and its nonzero coefficients appear only in the rows associated with the boundary faces \(e_i \in \Gamma_d\) sharing \(\mathbf{r}\). If \(\mathbf{r} \in \overline\Gamma_n\), the local system \eqref{eq:Predproblem_x} incorporates the essential stress boundary condition: 
	\begin{equation} \label{eq:vect_matr_gamma_n}
		\tilde{\mathbf{A}} \mathbf{\Sigma}_x = \tilde{\mathbf{B}}^T \mathbf{U}_x + \mathbf{G}_{n,x},
	\end{equation} 
	where \(\mathbf{G}_{n,x}\) is a \(\left(\mathcal{I} \times 1 \right)\) vector. \par 
	
	\medskip
	
	\noindent
	\textbf{The case of any center of mass.} Let \(\mathbf{r}\) be the center of mass of any simplex \(E\) and let \(\boldsymbol{\tau}_{i} \in \mathbf{V}_h\), \(i=1,\dots,d\), be the associated stress basis functions. Let \(\sigma^{n+1}_{x,i}\) be the \(d\) \DOFs\ of $\bsi^{n+1}_{x,h}$ associated with $\br$. Since the quadrature rule localizes the interaction of \DOFs, 
	\(\sigma^{n+1}_{x,i}\), \(i=1,\dots, d\) are coupled only with each other, resulting in a \(d \times d\) linear system with \spd\ matrix written in the same form as in \eqref{eq:vect_matr} for \(\sigma^{n+1}_{x,i}\), \(i=1,\dots, d\).
	
	\medskip
	\noindent
	\textbf{The \PP\ solved for \(\tilde{\bu}^{n+1}\) velocity component}. With the local elimination of the viscous stress, as previously described, the MFMFE scheme of the \PP\ becomes a system for the \(\tilde u_{x,h}^{n+1}\) with \(\left(d+1\right)\) \DOFs\ per simplex, and the associated algebraic system arising from \eqref{eq:Predproblem_x} becomes 
	\begin{equation} \label{system_global}
		\begin{pmatrix}
			\mathrm{A} \quad -\mathrm{B}^T\\
			\mathrm{B} \ \qquad \mathrm{D}
		\end{pmatrix} \begin{pmatrix}
			\mathrm{\Sigma}_x \\ \mathrm{U}_x
		\end{pmatrix} = \begin{pmatrix}
			\mathrm{G}_x \\ \mathrm{Q}_x + \mathrm{F}_x + \mathrm{L}_y + \mathrm{L}_z
		\end{pmatrix},
	\end{equation}
	where 
	\begin{itemize}
		
		\item \(\mathrm{A}\) is a \(\left(\mathfrak{I} \times \mathfrak{I}\right)\) block diagonal matrix, with \(\mathfrak{I} = d \times \mathfrak{S}_T + d \times N_T\), (with \(\mathfrak{S}_T\) and $N_T$  the total number of the faces and simplices, respectively, as specified in \cref{discr}). Matrix \(\mathrm{A}\) is given by assembling the (local) block matrices \(\mathbf{A}\), cf. \eqref{eq:vect_matr},
		
		\item \(\mathrm{\Sigma}_x\) is a \(\left(\mathfrak{I} \times 1\right)\) vector whose coefficients are the \DOFs\ of $\bsi_{x,h}^{n+1}$, obtained by assembling the (local) vectors \(\mathbf{\Sigma}_x\), cf. \eqref{eq:vect_matr}, 
		
		\item \(\mathrm{B}^T\) is a \(\left(\mathfrak{I} \times \left(d \times N_T\right) \right)\) matrix, obtained by assembling the block (local) matrices \(\mathbf{B}^T \), cf. \cref{eq:vect_matr},
		
		\item \(\mathrm{D}\) is a \(\left(\left(d \times N_T\right) \times\left( d \times N_T\right)\right)\) block diagonal matrix, whose blocks are associated with \linebreak $\left(\left(\frac{1}{\phi} \frac{1}{\Delta t} + L_{{11}}\right) \tilde{u}_{x,h}^{n+1},\xi_h\right)$ in \eqref{eq:Predproblem_x},
		
		\item \(\mathrm{U}_x\) is a \(\left(\left(d \times N_T\right) \times 1\right)\) vector, with coefficients the \DOFs\ of $\tilde{u}_{x,h}^{n+1}$,
		
		\item \(\mathrm{G}_x\) is a \(\left(\mathfrak{I} \times 1\right)\) vector obtained by the assembly of the local vectors \(\mathbf{G}_{d,x}\) and \(\mathbf{G}_{n,x}\), 
		
		\item \(\mathrm{Q}_x\), \(\mathrm{F}_x\), \(\mathrm{L}_y\) and \(\mathrm{L}_z\) are \(\left(\left(d \times N_T\right) \times 1\right)\) vectors corresponding to $-\left( q_{x,h}^n, \xi_h\right)$, $\left(\frac{1}{\phi}\frac{1}{\Delta t} u_{x,h}^{n},\xi_h\right)$, \( \left( L_{{12}} u_{y,h}^n, \xi_h \right) \) and \( \left( L_{{13}} u_{z,h}^n, \xi_h \right) \) in \eqref{eq:Predproblem_x}, respectively. 
	\end{itemize}
	Similar to $\mathrm{B}$, the integrals in $\mathrm{D}$, \(\mathrm{Q}_x\), \(\mathrm{F}_x\), \(\mathrm{L}_y\) and \(\mathrm{L}_z\) are computed by a \(\left(d+1\right)\)-point Gaussian quadrature rule
	
	By inverting the matrix $\mathrm{A}$, the  unknown stress vector \(\mathrm{\Sigma}_x\) can be eliminated in \cref{system_global}, resulting in a \(\left(\left(\left(d+1\right) \times N_T\right) \times \left(\left(d+1\right) \times N_T\right) \right)\) system for \(\mathrm{U}_x\):
	\begin{equation} \label{eq:sys_velox_PP}
		\left(\mathrm{D} + \mathrm{B} \ \mathrm{A}^{-1} \ \mathrm{B}^T \right)\mathrm{U}_x = \mathrm{Q}_x + \mathrm{F}_x + \mathrm{L}_y + \mathrm{L}_z + \mathrm{B} \mathrm{A}^{-1} \mathrm{G}_x,
	\end{equation}
	where the \(\left(d+1\right)\) \DOFs\ of \(\tilde{u}^{n+1}_{x,h}\) within each simplex \(E\) are coupled with the \(\left(d+1\right)\) \DOFs\ of all simplices sharing a vertex with $E$, see \cite[Figure~3]{ARICO2025117616} for a 2D view.
	
	The matrix in \eqref{eq:sys_velox_PP} is \spd\ \cite{W-Y} and the system is solved by a preconditioned conjugate gradient method with incomplete Cholesky factorization \cite{Dongarra}, which results in a very fast and efficient procedure. A lot of computational effort is saved because the factorization of the matrix of system in \eqref{eq:sys_velox_PP} occurs only once, before the the time loop, since the matrix coefficients depend only on geometric quantities, the kinematic viscosity \(\nu\), the time step size \(\Delta t\), as well as the physical properties of the porous medium, like the porosity \(\phi\) and the coefficients of the permeability tensor. \par
	
	\begin{remark}\label{sym-system}
		With the ratio \(\frac{\nu}{\phi}\) inside the divergence operator in \eqref{eq:momentum}, we obtain a symmetric matrix in \eqref{eq:sys_velox_PP}. It can be easily seen that this matrix would be non-symmetric if \(\frac{\nu}{\phi}\) was outside the divergence operator.
	\end{remark}

We apply the same procedure along the \(y\) and \(z\) directions, with the system having the same matrix as in \eqref{eq:sys_velox_PP}. At the end of the \PP \ we obtain an intermediate velocity $\tilde{\bu}^{n+1}_{h}|_E \in (P_1(E))^2$ and discontinuous, i.e., the continuity of normal flux at the simplex interfaces is not imposed in this step.

\subsection{Projection problem and pressure gradient update} \label{PjP}
Applying the quadrature rule in \eqref{eq:qr_elem}, the \PjP\ in \eqref{eq:Projecproblem} is: find \(\mathbf{u}_h^{n+1} \in \mathbf{V}_h : \mathbf{u}_h^{n+1} \cdot \mathbf{n} = Q_h^\Gamma\left(\mathbf{u}_b \cdot \mathbf{n}\right)\) on \(\Gamma_d\) and \(\Psi_h^{n+1} \in W_h\) such that
	\begin{subequations} \label{eq:Projecblem}
		\begin{align}
			&  \hskip - .1cm \bigg( \frac{1}{\phi} \frac{\mathbf{u}_h^{n+1}-\tilde{\mathbf{u}}_h^{n+1}}{\Delta t} + \mathbf{L} \left(\mathbf{u}_h^{n+1}-\tilde{\mathbf{u}}_h^{n+1}\right), \mathbf{v}_h\bigg)_Q 
			- ( \Psi_h^{n+1} - \Psi_h^n,\nabla \cdot \mathbf{v}_h ) = \notag \\
			& \quad - \langle \Psi_b^{n+1} - \Psi_b^n, \mathbf{v}_h \cdot \mathbf{n} \rangle_{\Gamma_n}
			\ \forall \mathbf{v}_h \in \mathbf{V}_{h,0,\Gamma_d}, \label{eq:Projecblem_1} \\
			& \hskip -.1cm \left( \nabla \cdot \mathbf{u}_h^{n+1},w_h\right) = 0 \quad \forall w_h \in W_h.
			\label{eq:Projecblem_2} 
		\end{align}
	\end{subequations}
	The method is the same as the one applied for the solution of the \PP\ for $\bsi_{x,h}^{n+1} \in \mathbf{V}_h$ and $\tilde{u}_{x,h}^{n+1} \in W_h$ in \eqref{eq:Predproblem_x}. The corrected velocity $\mathbf{u}_h^{n+1}$ is locally eliminated according to the same procedure as described in \cref{PP}, and we solve a \spd\ system for $\Psi_h^{n+1}$ of type \eqref{eq:sys_velox_PP}, where the \(\left(d+1\right)\) \DOFs\ of $\Psi_h^{n+1}$ within each simplex \(E\) are coupled with the \(\left(d+1\right)\) \DOFs\  of those simplices sharing a vertex with \(E\). The corrected velocity $\mathbf{u}_h^{n+1}$ can be easily recovered after the solution of the system by a local post-processing.
	
	The term \(\langle \Psi_b^{n+1} - \Psi_b^n, \mathbf{v}_h \cdot \mathbf{n} \rangle_{\Gamma_n}\) in \eqref{eq:Projecblem_1} is computed via numerical integration using the boundary values of \(\Psi_b^{n+1} \) on \(\Gamma_n\) given by 
	\begin{equation*}
		\Psi_b^{n+1}|_e = \left(\boldsymbol{\Sigma}_b^{n+1}|_e + \left.\left(\frac{\nu}{\phi} \nabla \tilde{\mathbf{u}}_h^{n+1}\right)\right|_e \, \mathbf{n}\right) \cdot \mathbf{n},
	\end{equation*}
	where the tensor $\nabla \tilde{\mathbf{u}}_h^{n+1}$ is computed on the simplex $E$ with face $e$ and it has a constant value, since $\tilde{\mathbf{u}}_h^{n+1} \in (P_1(E))^2$.\par 
	Before the next time iteration, we update the pressure gradient according to \eqref{eq:up_q}. This is performed by updating the \DOFs\ of $\mathbf{q}_h^{n+1}$, since both $\mathbf{q}_h^{n+1}$ and the test function $\bxi_h$ live in the same discretized space $(W_h)^d$. For any degree of freedom point $\br$, we have
	$$
	\mathbf{q}_h^{n+1}(\br) = \mathbf{q}_h^n(\br) - \left( \frac{1}{\Delta t} \frac{1}{\phi(\br)} \mathbf{I} + \mathbf{L}(\br) \right) \left(\mathbf{u}_h^{n+1}(\br) - \tilde{\mathbf{u}}_h^{n+1}(\br)\right).
	$$

	\section{Numerical Tests} \label{tests}
	In this section we present several numerical tests to illustrate the performance of the numerical method. We call ``method 1'' and ``method 2'' the standard method in \eqref{eq:Predproblem}--\eqref{eq:up_q} and the simplified method without the permeability correction term described in \eqref{eq:Projecproblem_1-simpl}--\eqref{eq:up_q-simpl}, respectively.
	
	We present four numerical tests. Test 1 is devoted to the numerical investigation of the convergence order in space and time, both for \(d=2\) and \(d=3\). In Test 2 we study the flow of a free fluid around a porous obstacle under different working conditions and we also compare the results from methods 1 and 2. In Test 3 we consider the interaction of a free fluid with a porous region with a steps-like interface. For Tests 2 and 3, we compare the results of our method to numerical solutions available in the literature. Finally, Test 4 is a show-case application where we investigate the effect of a porous medium filling the aneurysmatic sac of a real-case intracranial aneurysm (ICA). We emphasize that tests 2-4 involve two regions with parameters in either the Stokes or Darcy regimes. The results illustrate the robustness of the proposed method in both regimes.
	
	The computational grids of the applications proposed in this Section have been generated by the open-source software Netgen \cite{Schberl1997NETGENAA}, the numerical scheme has been implemented in an in-house Fortran 90-95 code, and the the open-source software Paraview \cite{Paraview} has been used for the post-processing and visualization of the results.\par
	
	\subsection{Test 1: study of the convergence order in space and time} \label{test1}
	The strategy we adopt to investigate the convergence order in space and time is 
	\begin{itemize}
		\item consider a computational heterogeneous porous domain \(\Omega \subset \mathbf{R}^d\), \(d = 2, 3\), with assigned spatial distribution of porosity and permeability tensor coefficients,
		\item assign an analytical time-dependent solution for the velocity components \(u_x, u_y, u_z\) and kinematic pressure \(\Psi\), and the corresponding ICs and BCs,
		\item discretize \(\Omega\) by a coarse simplicial grid \(T_h\) and progressively operate some refinements of \(T_h\),
		
	\end{itemize}
	
	To study the convergence order in space, we adopt a small enough time step size \(\Delta t\) and compute the \(L_2\)-norms of the errors of \(u_x, u_y, u_z\) and \(\Psi\) as
	\begin{equation} \label{eq:L2_error}
		L_2^q = \max_{1\le n\le N}\|q(t_n) - q_h^n\|, \textnormal{ with } q=u_x, \, u_y, \, u_z, \, \Psi,
	\end{equation}
	where \(N\) is the number of intervals in the time discretization, as specified in \cref{discr}. \par
	
	If \(h_l\) marks the grid size associated with the \(l\)-th refinement level and \(e_l^q \sim h_l^{r_h^q}\) is the error of the variable \(q\) in this level, we compute the the associated convergence rate \(r_h^q\) by comparing the errors associated with two consecutive grid refinement levels with size \(h_{l-1}\) and \(h_l\):
	\begin{equation} \label{eq:rc}
		r_h^q = \frac{\log \left(\frac{e_{l-1}^q}{e_l^q}\right)}{\log\left(\frac{h_{l-1}}{h_l}\right)}.    
	\end{equation}
	To investigate the convergence order in time, we run simulations over a refined enough grid, changing the time step size in a given range, and compute the \(L_2\)-norms of the errors as in \eqref{eq:L2_error}. The rate of convergence in time is obtained as 
	\begin{equation} \label{eq:rc_time}
		r_{\Delta t}^q = \frac{\log \left(\frac{e_{l-1}^q}{e_l^q}\right)}{\log\left(\frac{\Delta t_{l-1}}{\Delta t_l}\right)},
	\end{equation}
	where \(\Delta t_{l-1}\) and \(\Delta t_l\) are two consecutive time step sizes in the given range. \par
	In the following 2D and 3D applications, we assume that the kinematic fluid viscosity is \(\nu = 1\).

	\subsubsection{2D study} \label{2D_analyt}
	We consider the circular domain \(\Omega \subset \mathbf{R}^2 \) shown in \cref{test_1_sett_a} (top), where we have three layers (zones) with outer (dimensionless) radii 0.3, 0.5 and 1, respectively. Zones 1 and 2 are the inner and outer bulk porous regions, with constant in space values of the porosity \(\phi\) and the permeability tensor $\mathbf{K}$ given as 
	\begin{equation} \label{perm_bulk}
		\mathbf{K} = \mathbf{M} \mathbf{C} \mathbf{M}^{-1}, \qquad \mathbf{M} = \begin{pmatrix} \cos \alpha & -\sin \alpha \\ \sin \alpha & \cos \alpha  \end{pmatrix}, \qquad \mathbf{C} = \begin{pmatrix} \frac{k}{\beta} & 0 \\ 0 & k \end{pmatrix},
	\end{equation}
	where \(k\) and \(\beta\) are positive scalar parameters and \(\alpha\) is the anisotropy angle.  The porosity and permeability parameters are listed in \cref{test1_bulk_par}. A transition layer (or zone), is located in between the two bulk regions (zone 3 in \cref{test_1_sett_a} (top)). Here the values of \(\phi\) and \(\mathfrak{K}_{i,j}\) undergo continuous changes, from the values in zone 1 to the values in zone 2, according to 
	\begin{subequations} \label{profiles}
		\begin{gather} 
			\phi\left(d\right)=\frac{1}{2} \left(\phi_{max} - \phi_{min}\right) \mbox{tanh}\left(d\, \theta_{\psi}\right) + \frac{1}{2} \left(\phi_{max} + \phi_{min}\right), \label{prof_poro}\\
			\mathfrak{K}_{i,j}\left(d\right)=\mathfrak{K}^0_{i,j}\frac{1}{2} \left(1-\mbox{tanh}\left(d \, \theta_{\mathfrak{K}}\right)\right), \label{prof_invpermb}		
		\end{gather}
	\end{subequations}
	which are similar to \cite[Eq. (39)]{Arico-ODA}. In \eqref{profiles}, \(d\) is the distance from any point to the mid-line of the transition zone, assumed to be positive or negative depending on whether the point is in the side of zone 1 or 2, respectively. In addition,
	\(\phi_{max}\) and \(\phi_{min}\), as well as \(\mathfrak{K}^0_{i,j}\), \(i,j =1,\dots, d\), are positive real scalar parameters such that the porosity and permeability values in the transition layer and in the bulk porous regions match at the interfaces. The parameters \(\theta_{\psi}\) and \(\theta_{\mathfrak{K}}\) adjust the slope of the profiles. The values of all parameters are listed in \cref{test1_transzone_par}.
	
	We assign the time-dependent (dimensionless) analytical solution
	\begin{equation} \label{an_sol_2D}
		\Psi= \left(a_1 + b_1 x + c_1 x^2 + a_2 + b_2 y + c_2 y^2\right) \cos \omega t,
		\quad \bu = - \mathbf{K} \nabla \Psi,
	\end{equation}
	where \(a_i, \ b_i,\ c_i\) and \(\omega\) are scalar real variables listed in \cref{test1_par_ansol}. We add source terms in the model equations \eqref{eq:governing_Eqq} according to the assigned analytical solution. The maximum value of the Reynolds number is  \(Re = \frac{\| \bu\|_{max}\ 2\ R_{out}}{\nu} \simeq 1.4\e{-04}\), where \(\|\bu \|_{max}\) is the maximum value of the velocity magnitude and \(R_{out}\) is the outer radius of the domain.\par 
	We discretize the domain by either a coarse regular grid or a distorted grid shown in  \cref{test_1_sett_a} (top) and denoted as \(T_{h,R}\) and \(T_{h,D}\), respectively. The number of triangles and vertices of \(T_{h,R}\) and \(T_{h,D}\) is 638 and 292, and 340 and 163, respectively. For any triangle \(E \in T_{h,R} \textnormal{ or } E \in T_{h,D}\), we compute the dimensionless aspect ratio \(A_r\), 
	\begin{equation}
		A_r=\frac{2 \ h_{n}}{\sqrt{3} \ L_{x} },
	\end{equation}
	which represents the ratio between the aspect ratio of \(E\), where \(h_n\) and \(L_x\) are the minimum and the maximum values of the  heights and length sides of \(E\), respectively, and the ideal aspect ratio of the equilateral triangle, \(\frac{\sqrt{3}}{2}\). The minimum values of \(A_r\) are approximately 0.53 and 0.33 for \(T_{h,R}\) and \(T_{h,D}\), respectively. \par
	
	We perform five refinements of both \(T_{h,R}\) and \(T_{h,D}\), by halving each side. We consider two scenarios of the assigned BCs, depending on the distribution of the boundary portions \(\Gamma_d\) and \(\Gamma_n\), as in \cref{test_1_sett_a} (bottom), denoted as ``case 1'' and ``case 2'', respectively. The BCs in case 2 is a way we implement full Dirichlet boundary condition for the velocity, assigning \(\Psi\) only at one edge in \(\Gamma_n\) for the solution to be unique. \par 
	The ICs are obtained by \cref{an_sol_2D} by setting \(t = 0\). In \cref{fig_test1_sol} we plot the computed solution over the 2nd refinement grid level and case 1 at the (dimensionless) simulation time \(\mathcal{T} =2\). \par 
	To investigate the convergence order in space, we use a (dimensionless) time step size \(\Delta t = 1\e{-04}\). The final (dimensionless) simulation time is $\mathcal{T} = 6$. Tables \ref{error_RG_1}--\ref{error_DG_2}
	list the \(L_2\)-norms of the errors for the velocity components and pressure, as specified in \eqref{eq:L2_error}, as well as the associated convergence order \(r_h^q\), given by \eqref{eq:rc}, for the two BCs cases and two grid choices. The convergence rate \(r_h\) is approximately 2 in all tables. The values of \(L_2^{u_{x(y)}}\) computed for the second BCs case are generally smaller than in the first BCs scenario, while the opposite occurs for \(L_2^{\Psi}\). The errors computed over the sets of the regular grids are approximately a magnitude order smaller than the errors obtained over the sets of the distorted grids.   \par
	
	We investigate the convergence order in time running our simulations over the \(3^{rd}\) refined grid level and using a (dimensionless) time step size \(\Delta t\) in the range \(\left[1\e{-04}, 1\e{-01}\right]\).
	Tables \ref{error_time_RG_1}--\ref{error_time_DG_2}
	list the computed \(L_2\)-norms of the errors and the associated convergence order \(r_{\Delta t}^q\). The rate \(r_{\Delta t}^q\) is approximately 1, in line with the theoretically proved first order in \cref{thm:time-err}.\par
	
	We have also studied the convergence order in space and time for method 2. For sake of space, we present only the errors and rates over the set of distorted grids for the second BCs case in \cref{error_DG_2_model2,error_time_DG_2_model2}, observing again second order in space and first order in time. We note that the errors for the two methods are similar in this test with smooth analytical solution. However, in Test 2 we illustrate that method 2 may produce less accurate results that method 1 for more challenging problems.
	
	\begin{figure}
		\centering
		\begin{subfigure} {0.55\textwidth}
			\includegraphics[width=\linewidth]{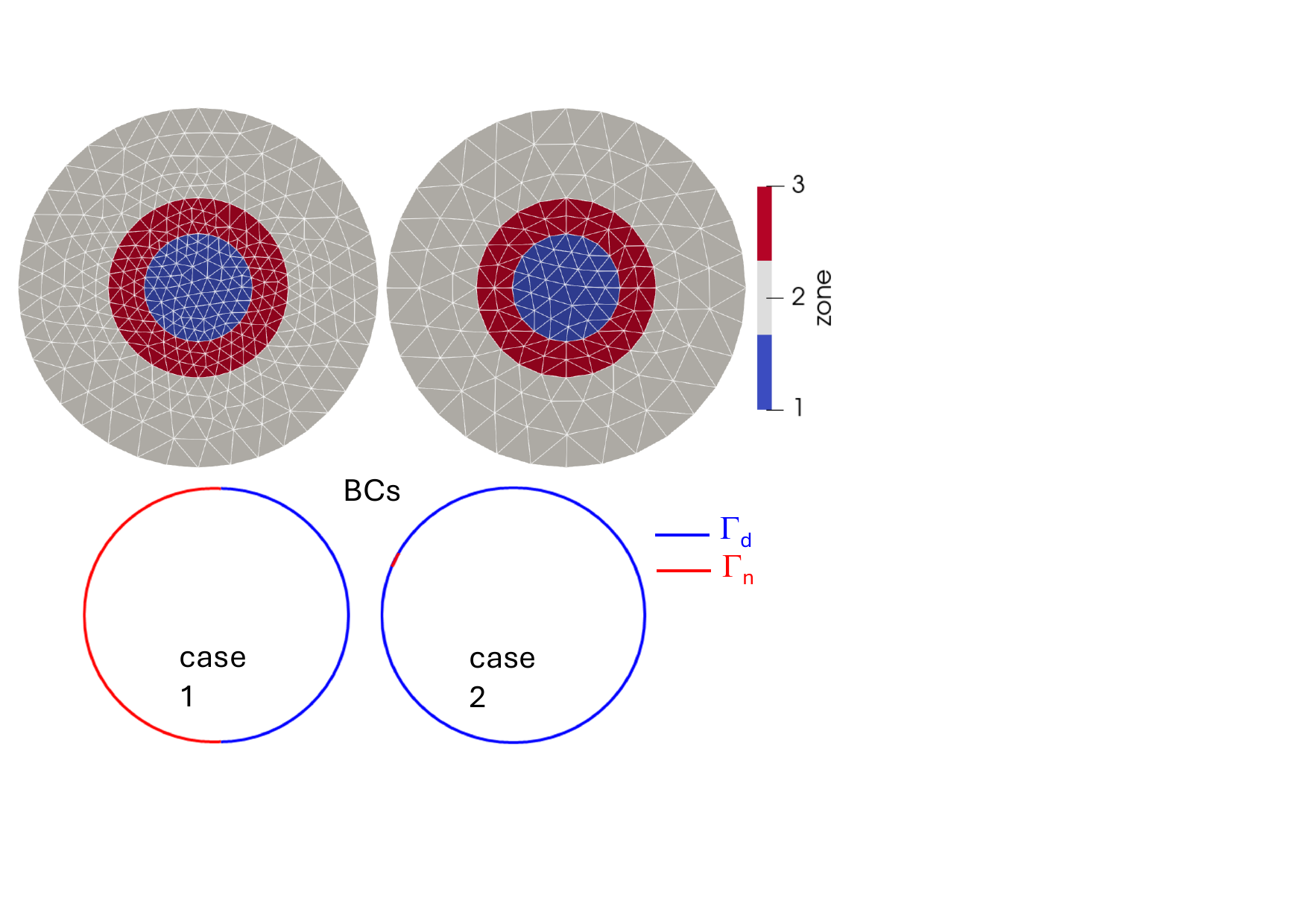}
			\caption{Test 1, 2D case. Top: coarse grids \(T_{h,R}\) (left) and \(T_{h,D}\) (right) and porous medium zones (the color bar indicates the zone number). Bottom: assigned BCs.}
			\label{test_1_sett_a}
		\end{subfigure}
		\begin{subfigure}  {0.3\textwidth}
			\includegraphics[width=\linewidth]{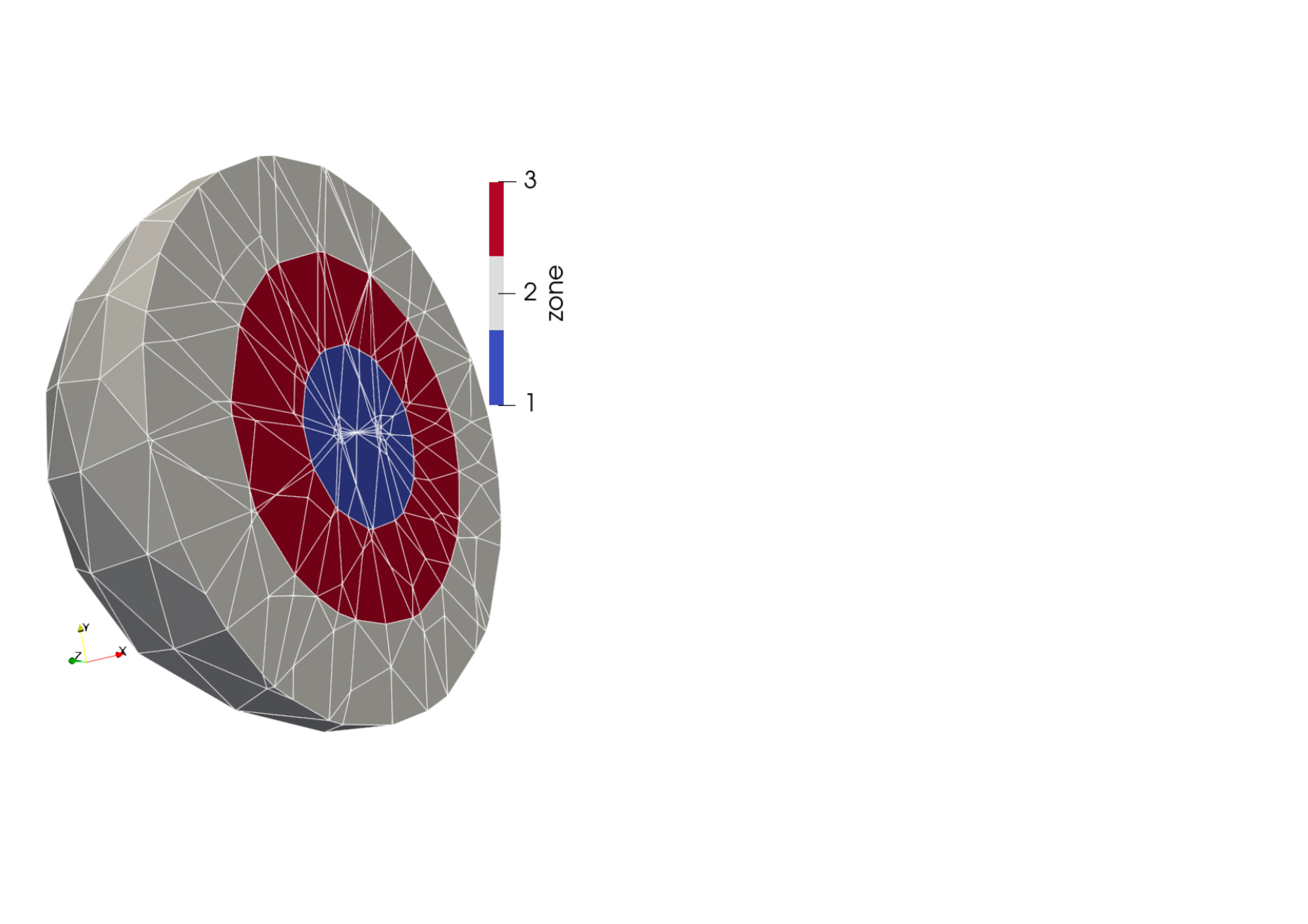}
			\caption{Test 1, 3D case. Porous medium zones (the color bar indicates the zone number).}
			\label{test_1_sett_b}
		\end{subfigure}
		\label{test_1_sett}
		\caption{Test 1, 2D and 3D cases.}
	\end{figure}
	
	\begin{figure}
		\centering
		\includegraphics[width=0.65\textwidth]{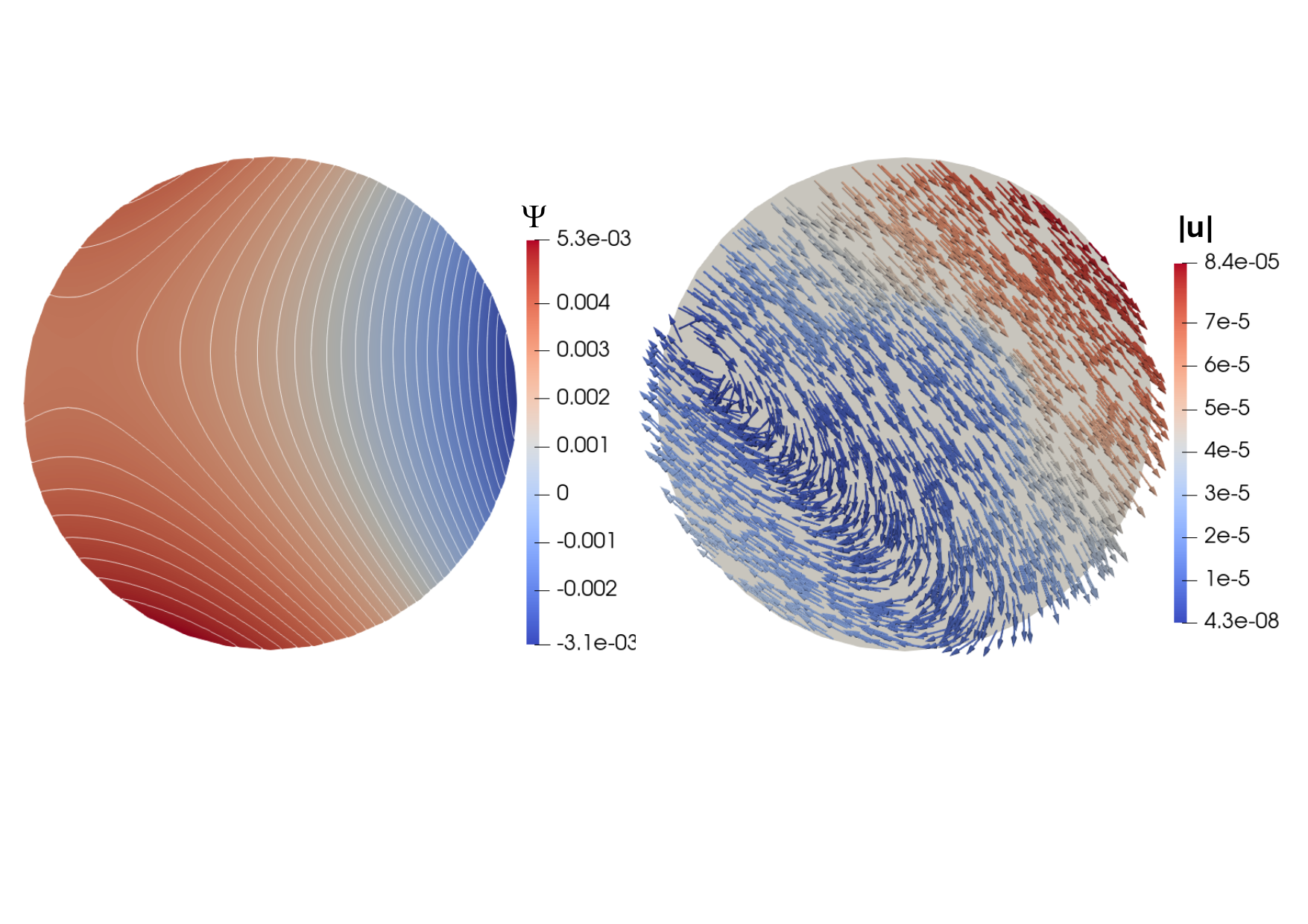}
		\caption{Test 1. Computed solution 1st refinement grid level, case 1. \(\Psi\) (left), \(\mathbf{u}\) (right)}
		\label{fig_test1_sol}
	\end{figure}
	
	\begin{table} 
		\caption{Test 1, 2D case. (Dimensionless) parameters of the bulk porous regions.}
		\centering
		\begin{tabular}{c c c c c c c c}
			zone & \(\phi\) & \(\alpha\) & $\beta$ & \(k\) & \(\mathfrak{K}_{1,1}\) & \(\mathfrak{K}_{2,2}\) & \(\mathfrak{K}_{1,2} = \mathfrak{K}_{2,1}\) \\
			\hline
			1 & 0.597	&\(-\pi / 4\) &	10 & \(1.37\e{02}\)& 403 & 403 & -329 \\	
			2 & 0.7037	&\(-\pi / 4\) &	10 & \(3.72\e{02}\)& 147.9 & 147.9 & -121  \\
			\hline		
		\end{tabular}
		\label{test1_bulk_par}
	\end{table}
	
	\begin{table} 
		\caption{Test 1, 2D case. (Dimensionless) parameters of the transition porous zone.}
		\centering
		\begin{tabular}{c c c c c c c c}
			zone & \(\phi_{max}\) & \(\phi_{min}\) & \(\theta_{\psi}\) & \(\theta_{\mathfrak{K}_{i,j}}\) & \(\mathfrak{K}^0_{1,1}\) & \(\mathfrak{K}^0_{2,2}\) & \(\mathfrak{K}^0_{1,2} = \mathfrak{K}^0_{2,1}\) \\
			\hline
			3 & 0.766	& 0.5344 &	5 & 5 & 550 & 550 & -450 \\	
			\hline		
		\end{tabular}
		\label{test1_transzone_par}
	\end{table}
	
	\begin{table} 
		\caption{Test 1, 2D case. (Dimensionless) parameters of the assigned analytical solution in \eqref{profiles}.}
		\centering
		\begin{tabular}{c c c c c c c}
			\(a_1\) & \(b_1\) & \(c_1\) & \(a_2\) & \(b_2\) & \(c_2\) & \(\omega\)\\
			\hline
			\(5\e{-04}\) & \(1.5\e{-03}\)	& \(-3.2\e{-03}\) &	\(-8\e{-04}\) & \(-1.95\e{-03}\) & \(1.95\e{-03}\) & \(\pi\) \\
			\hline		
		\end{tabular}
		\label{test1_par_ansol}
	\end{table}
	
	\begin{table} 
		\small
		\caption{Test 1, 2D case, method 1. \(L_2\)-norms of errors and rates in space: \(T_{h,R}\), BCs case 1.}
		\centering
		\begin{tabular}{c c c c c c c}
			ref. level \(l\) & $L_2^{u_x}$ & $L_2^{u_y}$ & \(L_2^{\Psi}\)&  $r_h^{u_x}$ & $r_h^{u_y}$ & \(r_h^{\Psi}\)   \\
			\hline
			0 & \(5.51\e{-04}\) &	\(5.51\e{-04}\) &	\(9.24\e{-02}\) & & & \\
			1& \(1.33\e{-04}\) &	\(1.47\e{-04}\) &	\(1.72\e{-02}\) &	2.05  & 1.9	 &	2.42 \\
			2 & \(3.3\e{-05}\) &	\(3.45\e{-05}\) &	\(4.15\e{-03}\) &	2.01 & 2.1	&	2.05  \\
			3 & \(7.9\e{-06}\) &	\(8.16\e{-06}\) &	\(1.4\e{-03}\) &	2.06 &	2.08 &	2 \\
			4 & \(1.88\e{-06}\) &	\(1.98\e{-06}\) &	\(2.57\e{-04}\) &	2.07 &	2.05 &	2.01  \\
			5 &\(4.69\e{-07}\) &	\(4.81\e{-07}\) &	\(6.4\e{-05}\) & 2	 &	2.04 &	2.01 \\
			\hline		
		\end{tabular}
		\label{error_RG_1}
	\end{table}
	
	\begin{table} 
		\small
		\caption{Test 1, 2D case, method 1. \(L_2\)-norms of errors and rates in space: \(T_{h,R}\), BCs case 2.}
		\centering
		\begin{tabular}{c c c c c c c}
			ref. level \(l\) & $L_2^{u_x}$ & $L_2^{u_y}$ & \(L_2^{\Psi}\)&  $r_h^{u_x}$ & $r_h^{u_y}$ & \(r_h^{\Psi}\)   \\
			\hline
			0 & \(8.39\e{-04}\) &	\(8.05\e{-04}\) &	\(1.38\e{-02}\) & & & \\
			1& \(1.18\e{-04}\) &	\(1.16\e{-04}\) &	\(3.11\e{-02}\) &	2.83  & 2.79	 &	2.15 \\
			2 & \(2.02\e{-05}\) &	\(2.13\e{-05}\) &	\(7.7\e{-03}\) &	2.55 & 2.45	&	2.01  \\
			3 & \(4.32\e{-06}\) &	\(4.76\e{-06}\) &	\(1.91\e{-03}\) &	2.22 &	2.16 &	2.01 \\
			4 & \(1\e{-06}\) &	\(1.1\e{-06}\) &	\(4.78\e{-04}\) &	2.11 &	2.11 &	2  \\
			5 &\(2.43\e{-07}\) &	\(2.59\e{-07}\) &	\(1.21\e{-04}\) & 2.04	 &	2.09 &	1.99 \\
			\hline		
		\end{tabular}
		\label{error_RG_2}
	\end{table}
	
	\begin{table} 
		\small
		\caption{Test 1, 2D case, method 1. \(L_2\)-norms of errors and rates in space: \(T_{h,D}\), BCs case 1.}
		\centering
		\begin{tabular}{c c c c c c c}
			ref. level \(l\) & $L_2^{u_x}$ & $L_2^{u_y}$ & \(L_2^{\Psi}\)&  $r_h^{u_x}$ & $r_h^{u_y}$ & \(r_h^{\Psi}\)   \\
			\hline
			0 & \(3.06\e{-03}\) &	\(2.64\e{-03}\) &	\(3.4\e{-01}\) & & & \\
			1& \(4.39\e{-04}\) &	\(3.81\e{-04}\) &	\(7.7\e{-02}\) &	2.8  & 2.8	 &	2.14 \\
			2 & \(1.02\e{-04}\) &	\(8.79\e{-05}\) &	\(1.88\e{-02}\) &	2.1 & 2.12	&	2.04  \\
			3 & \(2.4\e{-05}\) &	\(2.04\e{-05}\) &	\(4.5\e{-03}\) &	2.09 &	2.11 &	2.06 \\
			4 & \(5.6\e{-06}\) &	\(4.99\e{-06}\) &	\(1.12\e{-03}\) &	2.1 &	2.03 &	2.01 \\
			5 &\(1.35\e{-06}\) &	\(1.24\e{-06}\) &	\(2.8\e{-04}\) & 2.05	 &	2.01 &	2 \\
			\hline		
		\end{tabular}
		\label{error_DG_1}
	\end{table}
	
	\begin{table} 
		\small
		\caption{Test 1, 2D case, method 1. \(L_2\)-norms of errors and rates in space: \(T_{h,D}\), BCs case 2.}
		\centering
		\begin{tabular}{c c c c c c c}
			ref. level \(l\) & $L_2^{u_x}$ & $L_2^{u_y}$ & \(L_2^{\Psi}\)&  $r_h^{u_x}$ & $r_h^{u_y}$ & \(r_h^{\Psi}\)   \\
			\hline
			0 & \(6.07\e{-03}\) &	\(7\e{-03}\) &	\(7.1\e{-01}\) & & & \\
			1& \(8.01\e{-04}\) &	\(9.09\e{-04}\) &	\(1.6\e{-01}\) &	2.92  & 2.95	 &	2.15 \\
			2 & \(1.08\e{-04}\) &	\(1.2\e{-04}\) &	\(4\e{-02}\) &	2.89 & 2.92	&	2  \\
			3 & \(2.06\e{-05}\) &	\(2.08\e{-05}\) &	\(9.98\e{-03}\) &	2.39 &	2.53 &	2 \\
			4 & \(4.79\e{-06}\) &	\(4.9\e{-06}\) &	\(2.48\e{-03}\) &	2.1 &	2.09 &	2.01 \\
			5 &\(1.19\e{-06}\) &	\(1.17\e{-06}\) &	\(6.24\e{-04}\) & 2.01	 &	2.06 &	1.99 \\
			\hline		
		\end{tabular}
		\label{error_DG_2}
	\end{table}
	
	\begin{table} 
		\small
		\caption{Test 1, 2D case, method 1. \(L_2\)-norms of errors and rates in time: \(T_{h,R}\), BCs case 1.}
		\centering
		\begin{tabular}{c c c c c c c}
			\(\Delta t\) & $L_2^{u_x}$ & $L_2^{u_y}$ & \(L_2^{\Psi}\)&  $r_{\Delta t}^{u_x}$ & $r_{\Delta t}^{u_y}$ & \(r_{\Delta t}^{\Psi}\)   \\
			\hline
			\(1\e{01}\) & \(8.2\e{-03}\) &	\(8.4\e{-03}\) &	\(1.1\e{00}\) & & & \\
			\(1\e{02}\)& \(8.1\e{-04}\) &	\(8.3\e{-04}\) &	\(1.4\e{-01}\) &	1.01  & 1.01	 &	1.02 \\
			\(1\e{03}\) & \(7.99\e{-05}\) &	\(8.2\e{-05}\) &	\(1.04\e{-02}\) &	1.01 & 1.01	&	1  \\
			\(1\e{04}\) & \(7.9\e{-06}\) &	\(8.16\e{-06}\) &	\(1.04\e{-03}\) &1 &	1 &	1 \\
			\hline		
		\end{tabular}
		\label{error_time_RG_1}
	\end{table}
	
	\begin{table} 
		\small
		\caption{Test 1, 2D case, method 1. \(L_2\)-norms of errors and rates in time: \(T_{h,R}\), BCs case 2.}
		\centering
		\begin{tabular}{c c c c c c c}
			\(\Delta t\) & $L_2^{u_x}$ & $L_2^{u_y}$ & \(L_2^{\Psi}\)&  $r_{\Delta t}^{u_x}$ & $r_{\Delta t}^{u_y}$ & \(r_{\Delta t}^{\Psi}\)   \\
			\hline
			\(1\e{01}\) & \(4.41\e{-03}\) &	\(4.93\e{-03}\) &	\(1.93\e{00}\) & & & \\
			\(1\e{02}\)& \(4.35\e{-04}\) &	\(4.87\e{-04}\) &	\(1.9\e{-01}\) &	1.01  & 1.01	 &	1.01 \\
			\(1\e{03}\) & \(4.31\e{-05}\) &	\(4.87\e{-05}\) &	\(1.89\e{-02}\) &	1 & 1	&	1  \\
			\(1\e{04}\) & \(4.32\e{-06}\) &	\(4.76\e{-06}\) &	\(1.91\e{-03}\) &0.99 &	1.01 &	0.996 \\
			\hline		
		\end{tabular}
		\label{error_time_RG_2}
	\end{table}
	
	\begin{table} 
		\small
		\caption{Test 1, 2D case, method 1. \(L_2\)-norms of errors and rates in time: \(T_{h,D}\), BCs case 1.}
		\centering
		\begin{tabular}{c c c c c c c}
			\(\Delta t\) & $L_2^{u_x}$ & $L_2^{u_y}$ & \(L_2^{\Psi}\)&  $r_{\Delta t}^{u_x}$ & $r_{\Delta t}^{u_y}$ & \(r_{\Delta t}^{\Psi}\)   \\
			\hline
			\(1\e{01}\) & \(2.4\e{-02}\) &	\(2.1\e{-02}\) &	\(4.71\e{00}\) & & & \\
			\(1\e{02}\)& \(2.39\e{-03}\) &	\(2.04\e{-03}\) &	\(4.58\e{-01}\) &	1  & 1.01	 &	1.01 \\
			\(1\e{03}\) & \(2.43\e{-04}\) &	\(2.07\e{-04}\) &	\(4.6\e{-02}\) &	0.994 & 0.994	&	0.998  \\
			\(1\e{04}\) & \(2.4\e{-05}\) &	\(2.04\e{-05}\) &	\(4.5\e{-03}\) &1.01 &	1.01 &	1.01 \\
			\hline		
		\end{tabular}
		\label{error_time_DG_1}
	\end{table}
	
	\begin{table} 
		\small
		\caption{Test 1, 2D case, method 1. \(L_2\)-norms of errors and rates in time: \(T_{h,D}\), BCs case 2.}
		\centering
		\begin{tabular}{c c c c c c c}
			\(\Delta t\) & $L_2^{u_x}$ & $L_2^{u_y}$ & \(L_2^{\Psi}\)&  $r_{\Delta t}^{u_x}$ & $r_{\Delta t}^{u_y}$ & \(r_{\Delta t}^{\Psi}\)   \\
			\hline
			\(1\e{01}\) & \(2.23\e{-02}\) &	\(2.21\e{-02}\) &	\(9.97\e{00}\) & & & \\
			\(1\e{02}\)& \(2.14\e{-03}\) &	\(2.13\e{-03}\) &	\(1\e{00}\) &	1.02  & 1.03  &	0.998 \\
			\(1\e{03}\) & \(2.15\e{-04}\) &	\(2.15\e{-04}\) &	\(1.01\e{-01}\) &	0.998 & 0.997	&	0.997  \\
			\(1\e{04}\) & \(2.06\e{-05}\) &	\(2.08\e{-05}\) &	\(9.98\e{-03}\) &1.02 &	1.01 &	1.01 \\
			\hline		
		\end{tabular}
		\label{error_time_DG_2}
	\end{table}
	
	\begin{table} 
		\small
		\caption{Test 1, 2D case, method 2. \(L_2\)-norms of errors and rates in space: \(T_{h,D}\), BCs case 2.}
		\centering
		\begin{tabular}{c c c c c c c}
			ref. level \(l\) & $L_2^{u_x}$ & $L_2^{u_y}$ & \(L_2^{\Psi}\)&  $r_h^{u_x}$ & $r_h^{u_y}$ & \(r_h^{\Psi}\)   \\
			\hline
			0 & \(6.27\e{-03}\) &	\(7.9\e{-03}\) &	\(8\e{-01}\) & & & \\
			1& \(8.32\e{-04}\) &	\(1.1\e{-03}\) &	\(1.82\e{-01}\) &	2.91  & 2.84	 &	2.14 \\
			2 & \(1.35\e{-04}\) &	\(1.5\e{-04}\) &	\(4.53\e{-02}\) &	2.62 & 2.87	&	2.01  \\
			3 & \(2.48\e{-05}\) &	\(2.6\e{-05}\) &	\(1.1\e{-02}\) &	2.44 &	2.53 &	2.04 \\
			4 & \(5.02\e{-06}\) &	\(5.78\e{-06}\) &	\(2.63\e{-03}\) &	2.3 &	2.17 &	2.06 \\
			5 &\(1.23\e{-06}\) &	\(1.39\e{-06}\) &	\(6.45\e{-04}\) & 2.03	 &	2.06 &	2.03 \\
			\hline		
		\end{tabular}
		\label{error_DG_2_model2}
	\end{table}
	
	\begin{table} 
		\small
		\caption{Test 1, 2D case, method 2. \(L_2\)-norms of errors and rates in time: \(T_{h,D}\), BCs case 2.}
		\centering
		\begin{tabular}{c c c c c c c}
			\(\Delta t\) & $L_2^{u_x}$ & $L_2^{u_y}$ & \(L_2^{\Psi}\)&  $r_{\Delta t}^{u_x}$ & $r_{\Delta t}^{u_y}$ & \(r_{\Delta t}^{\Psi}\)   \\
			\hline
			\(1\e{01}\) & \(2.57\e{-02}\) &	\(2.615\e{-02}\) &	\(11.9\e{00}\) & & & \\
			\(1\e{02}\)& \(2.505\e{-03}\) &	\(2.59\e{-03}\) &	\(1.1\e{00}\) &	1.01  & 1  & 1.03 \\
			\(1\e{03}\) & \(2.49\e{-04}\) &	\(2.589\e{-04}\) &	\(1.107\e{-01}\) &	1 & 1	&	1  \\
			\(1\e{04}\) & \(2.48\e{-05}\) &	\(2.6\e{-05}\) &	\(1.1\e{-02}\) &1 &	0.998 &	1 \\
			\hline		
		\end{tabular}
		\label{error_time_DG_2_model2}
	\end{table}

	\subsubsection{3D study}
	We present a 3D version of the study presented in \cref{2D_analyt}, where \(\Omega\) is a spherical heterogeneous porous medium, and a generic section is shown in \cref{test_1_sett_b}, with the two bulk porous regions 1 and 2, whose physical properties have the values listed in \cref{test1_bulk_par_3D}, and a transition layer in between (zone 3) where the physical properties of the porous medium change as in \eqref{profiles}, according to the values listed in \cref{test1_transzone_par_3D}. The (dimensionless) radii of zones 1, 3, and 2 are 0.2, 0.4, and 0.6, respectively.
	
	The assigned analytical solution is 
	\begin{equation} \label{an_sol_3D}
		\Psi= \left(a_1 + b_1 x + c_1 x^2 + a_2 + b_2 y + c_2 y^2 + a_3 + b_3 z + c_3 z^2\right) \cos \omega t, \quad \bu = - \mathbf{K} \nabla \Psi,
	\end{equation}
	with coefficients \(a_i, b_i, c_i\) listed in \cref{test1_par_ansol_3D}. The maximum value of the Reynolds number is \(Re = \frac{\| \bu\|_{max}\ 2\ R_{out}}{\nu} \simeq 3.75\e{-04}\).
	
	The domain \(\Omega\) is discretized by a coarse unstructured tetrahedral grid with 1181 simplices and 278 vertices. The maximum aspect ratio, given for each simplex \(E \in T_h\) by the ratio between the maximum edge length and the minimum height, is 5.35, where the ideal value of a regular tetrahedron is \(\simeq 1.225\). Again, two scenarios of BCs have been considered: the first where the boundary faces on \(\Gamma_n\) and \(\Gamma_d\) lie to the left and right sides of the plane \(x=0\), respectively, and the second, where only one face is on \(\Gamma_n\). Due to computational limitations, we perform only two grid refinements. \par 
	
	As in the 2D case, we study the convergence order in space for both BCs cases running simulations using a (dimensionless) time step size \(\Delta t = 1\e{-04}\). The ICs are obtained as in \cref{2D_analyt}. In \cref{error_RG_1_3D,error_RG_2_3D} we list the \(L_2\)-norms of the errors of the velocity components and the pressure as well as the associated convergence order \(r_h^q\). The convergence order is higher than 2, but we expect that it will approach 2 if more refinements were considered. For the convergence order in time, we run simulations over the \(2^{nd}\) refined grid level, using a (dimensionless) time step size \(\Delta t\) in the range \(\left[1\e{-04}, 1\e{-01}\right]\). In \cref{error_RG_1_3D_time,error_RG_2_3D_time} we list the computed \(L_2\)-norms of the errors and the associated rates \(r_{\Delta t}^q\). As in the 2D case, the convergence order in time is approximately equal to 1. \par
	
	Generally, all results listed in \crefrange{error_RG_1}{error_time_DG_2_model2} and \crefrange{error_RG_1_3D}{error_RG_2_3D_time} illustrate the unconditional stability of the method in time and space, in line with \cref{thm:stab,thm:stab-simpl}.

	\begin{table} 
		\caption{Test 1, 3D case. (Dimensionless) parameters of the bulk porous regions}.
		\centering
		\begin{tabular}{c c c c c c c c}
			zone & \(\phi\) & \(\mathfrak{K}_{1,1}\) & \(\mathfrak{K}_{2,2}\) & \(\mathfrak{K}_{3,3}\) & \(\mathfrak{K}_{1,2} = \mathfrak{K}_{2,1} \) & \(\mathfrak{K}_{1,3} = \mathfrak{K}_{3,1}\) & \(\mathfrak{K}_{2,3} = \mathfrak{K}_{3,2}\) \\
			\hline
			1 & 0.597	& 716.94 &	251.70 & 566.57 & 82.24 & -48.177 & 280.8 \\	
			2 & 0.7037	& 263.75 &	92.6 & 208.43 & 30.25 & -17.72 & 103.3  \\
			\hline		
		\end{tabular}
		\label{test1_bulk_par_3D}
	\end{table}
	
	\begin{table} 
		\caption{Test 1, 3D case. (Dimensionless) parameters of the transition porous zone.}
		\centering
		\begin{tabular}{c c c c c c c c c c c}
			zone & \(\phi_{max}\) & \(\phi_{min}\) & \(\theta_{\psi}\) & \(\theta_{\mathfrak{K}_{i,j}}\) & \(\mathfrak{K}^0_{1,1}\) & \(\mathfrak{K}^0_{2,2}\) & \(\mathfrak{K}^0_{3,3}\) & \(\mathfrak{K}^0_{1,2} = \mathfrak{K}^0_{2,1}\) & \(\mathfrak{K}^0_{1,3} = \mathfrak{K}^0_{3,1}\) & \(\mathfrak{K}^0_{2,3} = \mathfrak{K}^0_{3,2}\) \\
			\hline
			3 & 0.766	& 0.5344 &	5 & 5 & 980.7 & 344.3 & 775 & 112.5 & -65.9 & 384.1 \\	
			\hline		
		\end{tabular}
		\label{test1_transzone_par_3D}
	\end{table}
	
	\begin{table}  \footnotesize 
		\caption{Test 1, 3D case. (Dimensionless) parameters of the assigned analytical solution in \eqref{profiles}.}
		\centering
		\begin{tabular}{@{\,}c @{\,}c @{\,}c @{\,}c @{\,}c @{\,}c @{\,}c @{\,}c @{\,}c @{\,}c } 
			\(a_1\) & \(b_1\) & \(c_1\) & \(a_2\) & \(b_2\) & \(c_2\) & \(a_3\) & \(b_3\) & \(c_3 \) & \(\omega\)\\
			\hline
			\(5\e{-04}\) & \(1.5\e{-03}\) & \(1.3\e{-03}\)	& \(-3.2\e{-03}\) &	\(-8\e{-04}\) & \(1.5\e{-04}\) & \(-1.95\e{-03}\) & \(1.95\e{-03}\) & \(2.45\e{-03}\) & \(\pi\) \\
			\hline		
		\end{tabular}
		\label{test1_par_ansol_3D}
	\end{table}
	
	\begin{table} 
		\small
		\caption{Test 1, 3D case, method 1. \(L_2\)-norms of errors and rates in space for BCs case 1.}
		\centering
		\begin{tabular}{c c c c c c c c c }
			ref. level \(l\) & $L_2^{u_x}$ & $L_2^{u_y}$ & $L_2^{u_z}$ & \(L_2^{\Psi}\)&  $r_h^{u_x}$ & $r_h^{u_y}$ & $r_h^{u_z}$ & \(r_h^{\Psi}\)   \\
			\hline
			0 & \(6.48\e{-03}\) &	\(1.68\e{-02}\) & \(1.44\e{-02}\) &	\(2.42\e{00}\) & & & & \\
			1& \(7.4\e{-04}\) &	\(7.9\e{-04}\) & \(8.4\e{-04}\) & 	\(4.97\e{-01}\) &	3.13  & 4.41 &	4.1 & 2.29 \\
			2 & \(9.19\e{-05}\) &	\(1.15\e{-04}\) & \(1.7\e{-04}\) &	\(8.29\e{-02}\) &	3.01 & 2.78	& 2.3 &	2.58  \\
			\hline		
		\end{tabular}
		\label{error_RG_1_3D}
	\end{table}
	
	\begin{table} 
		\small
		\caption{Test 1, 3D case, method 1. \(L_2\)-norms of errors and rates in space for BCs case 2.}
		\centering
		\begin{tabular}{c c c c c c c c c }
			ref. level \(l\) & $L_2^{u_x}$ & $L_2^{u_y}$ & $L_2^{u_z}$ & \(L_2^{\Psi}\)&  $r_h^{u_x}$ & $r_h^{u_y}$ & $r_h^{u_z}$ & \(r_h^{\Psi}\)   \\
			\hline
			0 & \(6.85\e{-03}\) &	\(1.41\e{-02}\) & \(1.31\e{-02}\) &	\(2.45\e{00}\) & & & & \\
			1& \(7.4\e{-04}\) &	\(7.9\e{-04}\) & \(8.4\e{-04}\) & 	\(4.97\e{-01}\) &	3.21  & 4.15 &	3.97 & 2.3 \\
			2 & \(9.28\e{-05}\) &	\(1.1\e{-04}\) & \(1.7\e{-04}\) &	\(8.29\e{-02}\) &	3 & 2.84	& 2.3 &	2.58  \\
			\hline		
		\end{tabular}
		\label{error_RG_2_3D}
	\end{table}
	
	\begin{table} 
		\small
		\caption{Test 1, 3D case, method 1. \(L_2\)-norms of errors and rates in time for BCs case 1.}
		\centering
		\begin{tabular}{c c c c c c c c c }
			\(\Delta t\) & $L_2^{u_x}$ & $L_2^{u_y}$ & $L_2^{u_z}$ & \(L_2^{\Psi}\)&  $r_{\Delta t}^{u_x}$ & $r_{\Delta t}^{u_y}$ & $r_{\Delta t}^{u_z}$ & \(r_{\Delta t}^{\Psi}\)   \\
			\hline
			\(1\e{01}\) & \(1.1\e{-01}\) &	\(1.18\e{-01}\) & \(1.78\e{-01}\) &	\(7.8\e{01}\) & & & & \\
			\(1\e{02}\)& \(9.96\e{-03}\) &	\(1.18\e{-02}\) & \(1.74\e{-02}\) & 	\(7.97\e{00}\) &	1.04  & 1 & 1.01 & 0.991 \\
			\(1\e{03}\) & \(9.4\e{-04}\) &	\(1.16\e{-03}\) & \(1.72\e{-03}\) &	\(8.15\e{-01}\) &	1.03 & 1.01	& 1.01 &	0.99  \\
			\(1\e{04}\)& \(9.19\e{-05}\) &	\(1.15\e{-04}\) & \(1.7\e{-04}\) &	\(8.29\e{-02}\) &	1.01 & 1	& 1.01 &	0.993 \\
			\hline		
		\end{tabular}
		\label{error_RG_1_3D_time}
	\end{table}
	
	\begin{table} 
		\small
		\caption{Test 1, 3D case, method 1. \(L_2\)-norms of errors and rates in time for BCs case 2.}
		\centering
		\begin{tabular}{c c c c c c c c c }
			\(\Delta t\) & $L_2^{u_x}$ & $L_2^{u_y}$ & $L_2^{u_z}$ & \(L_2^{\Psi}\)&  $r_{\Delta t}^{u_x}$ & $r_{\Delta t}^{u_y}$ & $r_{\Delta t}^{u_z}$ & \(r_{\Delta t}^{\Psi}\)   \\
			\hline
			\(1\e{01}\) & \(9.64\e{-02}\) &	\(1.2\e{-01}\) & \(1.69\e{-01}\) &	\(8.27\e{01}\) & & & & \\
			\(1\e{02}\)& \(9.61\e{-03}\) &	\(1.17\e{-02}\) & \(1.68\e{-02}\) & 	\(8.3\e{00}\) &	1  & 1.01 & 1 & 0.998 \\
			\(1\e{03}\) & \(9.35\e{-04}\) &	\(1.15\e{-03}\) & \(1.69\e{-03}\) &	\(8.3\e{-01}\) &	1.01 & 1.01	& 0.996 &	1  \\
			\(1\e{04}\)& \(9.28\e{-05}\) &	\(1.1\e{-04}\) & \(1.7\e{-04}\) &	\(8.29\e{-02}\) &	1 & 1.02	& 0.999 &	1 \\
			\hline		
		\end{tabular}
		\label{error_RG_2_3D_time}
	\end{table}
	

	\subsection{Test 2: free fluid flow around a porous obstacle under different working conditions} \label{test2}
	The setting of this numerical experiment is shown in \cref{fig_test2_sett}, where a fluid hits a porous obstacle acting as a filter. This test has been proposed in \cite{Iliev2004} and further studied in \cite{SCHNEIDER2020109012, Arico-ODA}. The flow is driven by a left-to-right pressure drop \(\left(\Psi_1 - \Psi_2\right)=1\e{-06} \textnormal{m}^2/\textnormal{s}^2\). The kinematic fluid viscosity is \(\nu = 1.5 \e{-05}\) \nuu. In the free fluid region we set $\phi = 1$ and $\mathbf{K} = \infty$, i.e. $\mathfrak{K} = 0$, and test different values of $\phi$ and $\mathbf{K}$ in the porous region, with $\mathbf{K}$ computed as in \eqref{perm_bulk}. There is no transition region between the free fluid and porous regions.
	
	\begin{figure}
		\centering
		\includegraphics[width=0.44\linewidth]{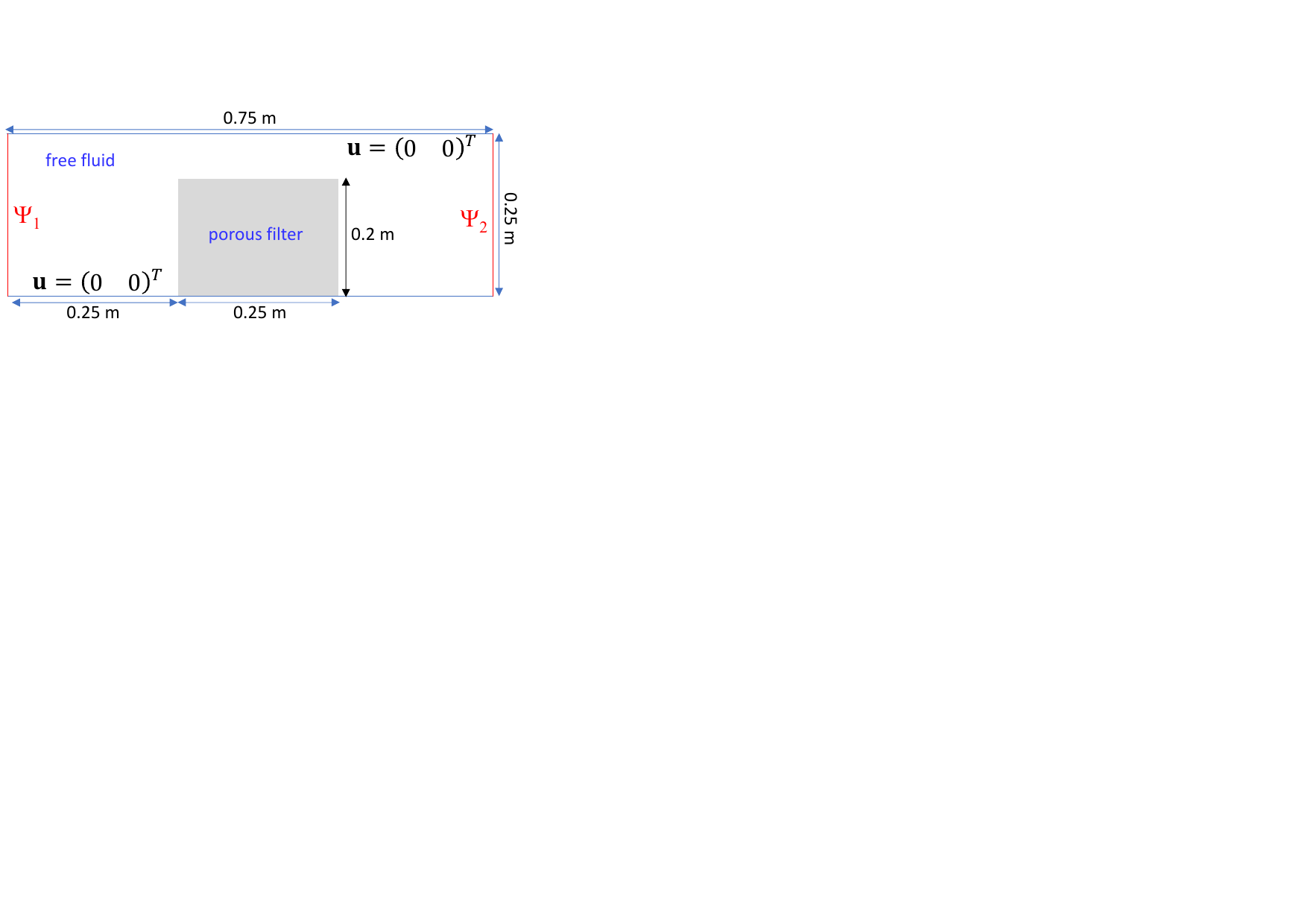} 
		\includegraphics[width=0.265\linewidth]{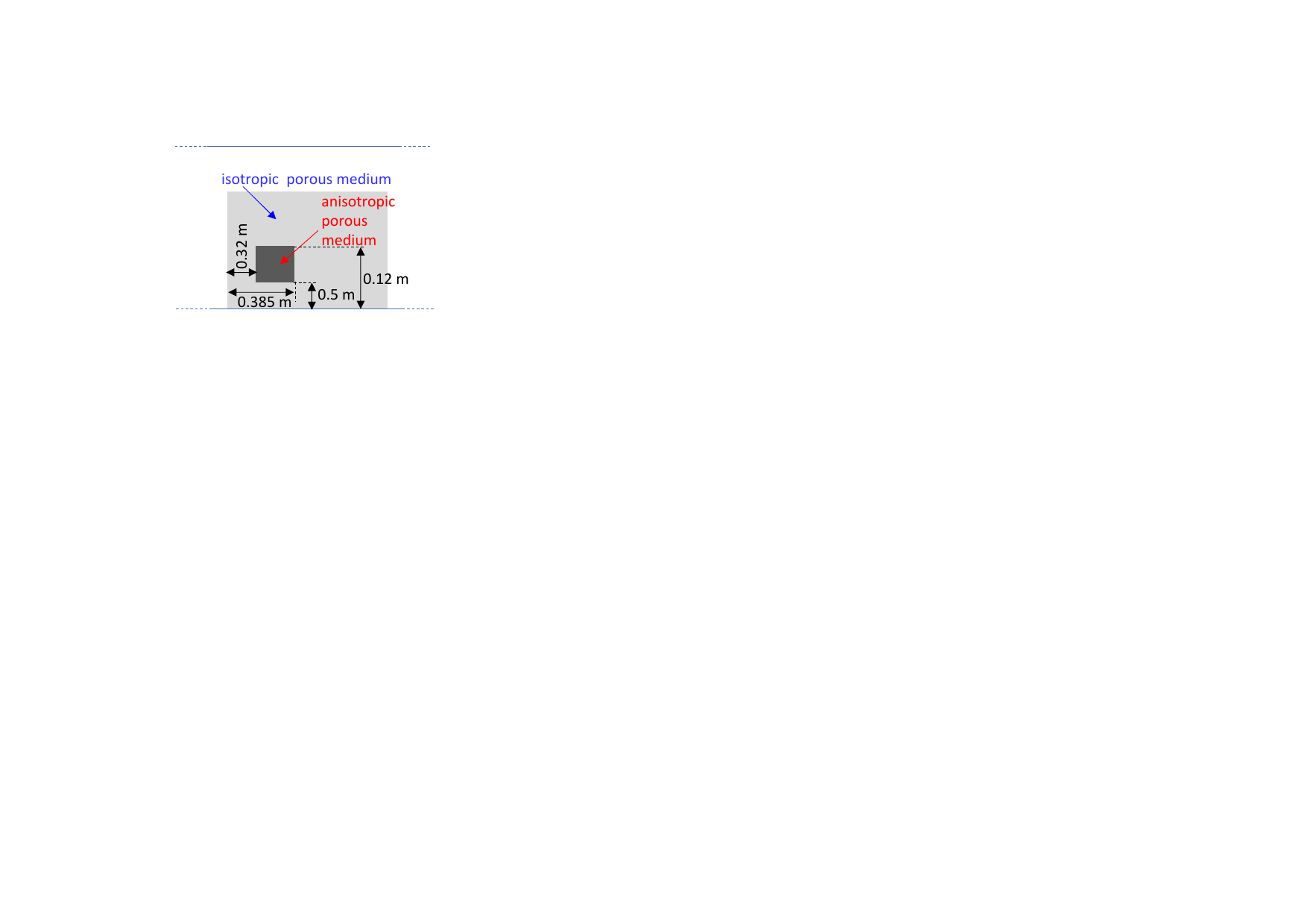}
		\includegraphics[width=0.28\linewidth]{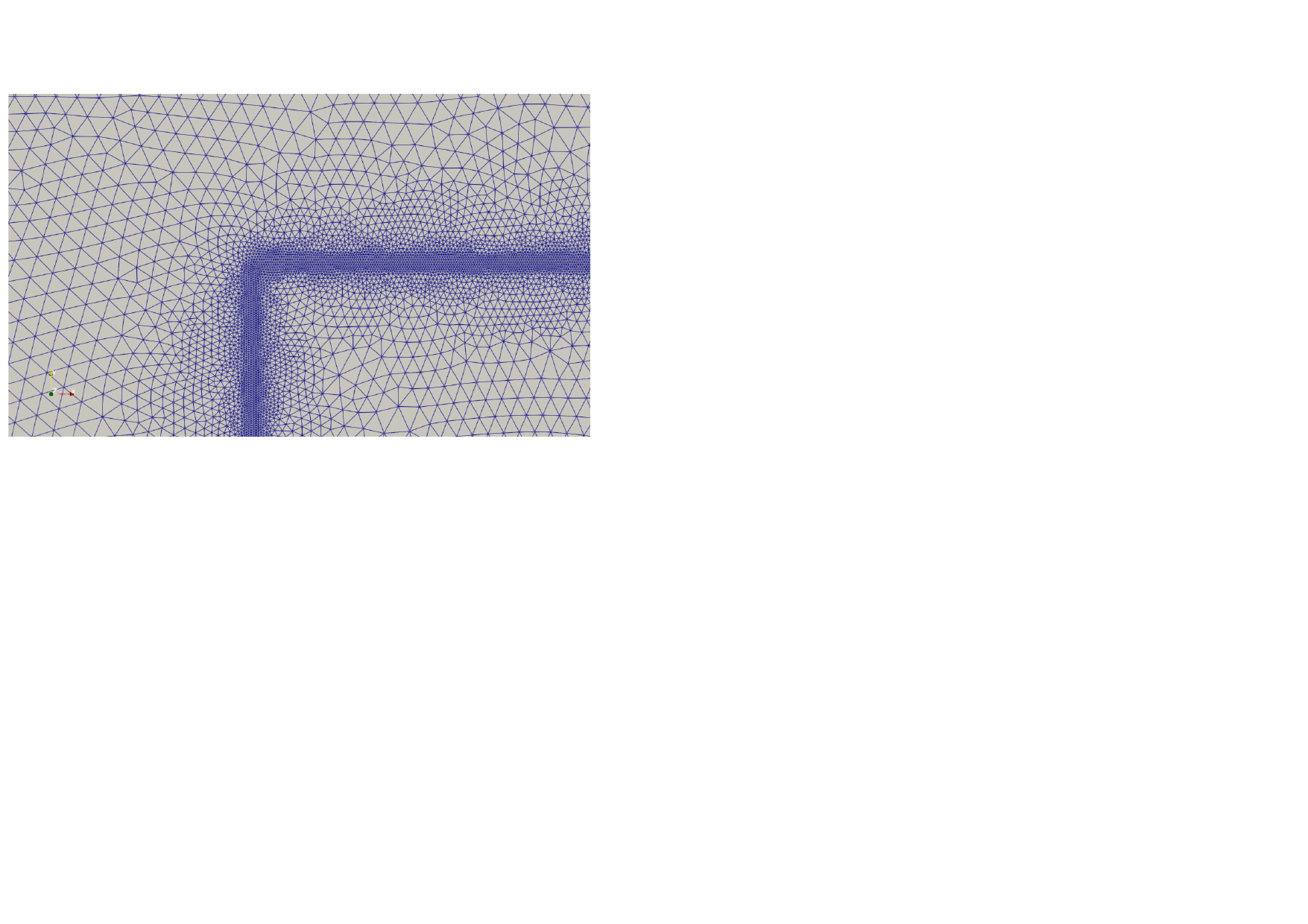}
		\caption{Test 2, settings of the numerical experiment. Homogeneous porous medium (left), heterogeneous porous medium (center), and zoom of the computational grid near the interface (right).}
		\label{fig_test2_sett}
	\end{figure}
	

A channelized flow is established above the porous filter and the maximum value of the Reynolds number \(Re\), computed according the to maximum flow velocity in the fluid region, the depth of the fluid channel above the obstacle, and the kinematic fluid viscosity, is \(Re \in \left[0.2, 0.22\right]\) and 2, 3 magnitude orders smaller in the porous block. Generally, in all the scenarios presented for this test case, the fluid flows primarily around the porous obstacle, being upward oriented before entering the obstacle and downward oriented after exiting it. \par

The computational grid \(T_h\) has 161105 triangles and 80991 vertices, and the grid size ranges from \(\simeq 2.25\e{-03}\) m, in the bulk fluid and porous regions, to \(\simeq 2.82 \e{-04}\) m, close to the interface. The time step size \(\Delta t\) is 4 s. The ICs are zero velocity and pressure in the domain. In the following we show the results at the simulation time \(\mathcal{T} = 80\) s, when the steady state conditions are established. \par

In \cref{homog} we consider homogeneous porous filter (see \cref{fig_test2_sett}, left) and different porous medium anisotropy conditions, while in \cref{hetero} we investigate how the method simulates a heterogeneous filter (see \cref{fig_test2_sett}, right) with strong contrast of porosity and permeability values.

\subsubsection{Homogeneous porous medium} \label{homog}
The porosity in the porous filter is $\phi = 0.4$, while the permeability $\mathbf{K}$ is computed as in \eqref{perm_bulk}. \cref{fig:test_2_2} shows the velocity and pressure fields computed for method 1, setting \(k=1 \e{-06}\) \msq, \(\beta=100\), and \(\alpha = \pm \pi/4\). Due to the anisotropy effects in the porous filter, the flow, oriented according to the principal direction of the permeability tensor, exits or enters at the top horizontal side, if \(\alpha = -\pi/4\) or \(\alpha = \pi/4\), respectively. The pressure profiles are also aligned with the principal direction of the permeability tensor. The pressure exhibits very slight discontinuity across the top horizontal interface. In the case of \(\alpha = -\pi/4\), the flow inside the filter is upward oriented. The opposite flow directions inside and outside the filter along the right interface create small vortices at the bottom of that interface. In the case of \(\alpha = \pi/4\), the flow inside the filter is downward oriented. The opposite flow directions outside and intside the filter along the left interface 
create small vortices at the bottom of that interface. We obtained very similar results using smaller time step sizes \(\Delta t\), not shown here for brevity.
A very good agreement is observed with the results provided in \cite[Fig. 5]{SCHNEIDER2020109012} and \cite[Fig. 18]{Arico-ODA}. In \cite{SCHNEIDER2020109012} the authors apply a two-domain approach (TDA), coupling the Navier--Stokes equations (for compressible fluid) in the fluid region with the Darcy equation in the porous domain via the Beavers--Joseph--Saffman slip condition \cite{Beavers_Joseph_1967}, as well as conservation of mass and momentum, at the interface. They adopt a staggered-grid finite volume method for the Navier--Stokes equations and MPFA for Darcy flow, using uniform square grids grid with size \(5 \e{-04}\) m.
In \cite{Arico-ODA}, a one-domain approach (ODA) with a transition layer is used to solve the Navier--Stokes--Brinkman equations for an incompressible fluid using a three-step predictor-corrector method. A finite volume method is applied for the predictor step, accounting for the inertial terms of the momentum equation, and two corrections are then performed to update the viscous and gradient pressure terms using a mass-lumped mixed hybrid finite element scheme with \(RT_0\) basis functions. An unstructured triangular grid is used with mesh size ranging from \(5 \e{-03}\) m in the bulk fluid and porous regions to \(1 \e{-04}\) m in the transition layer. More details can be found in \cite{SCHNEIDER2020109012,Arico-ODA}. \par

\begin{figure}
	\centering
	\includegraphics[width=0.47\linewidth]{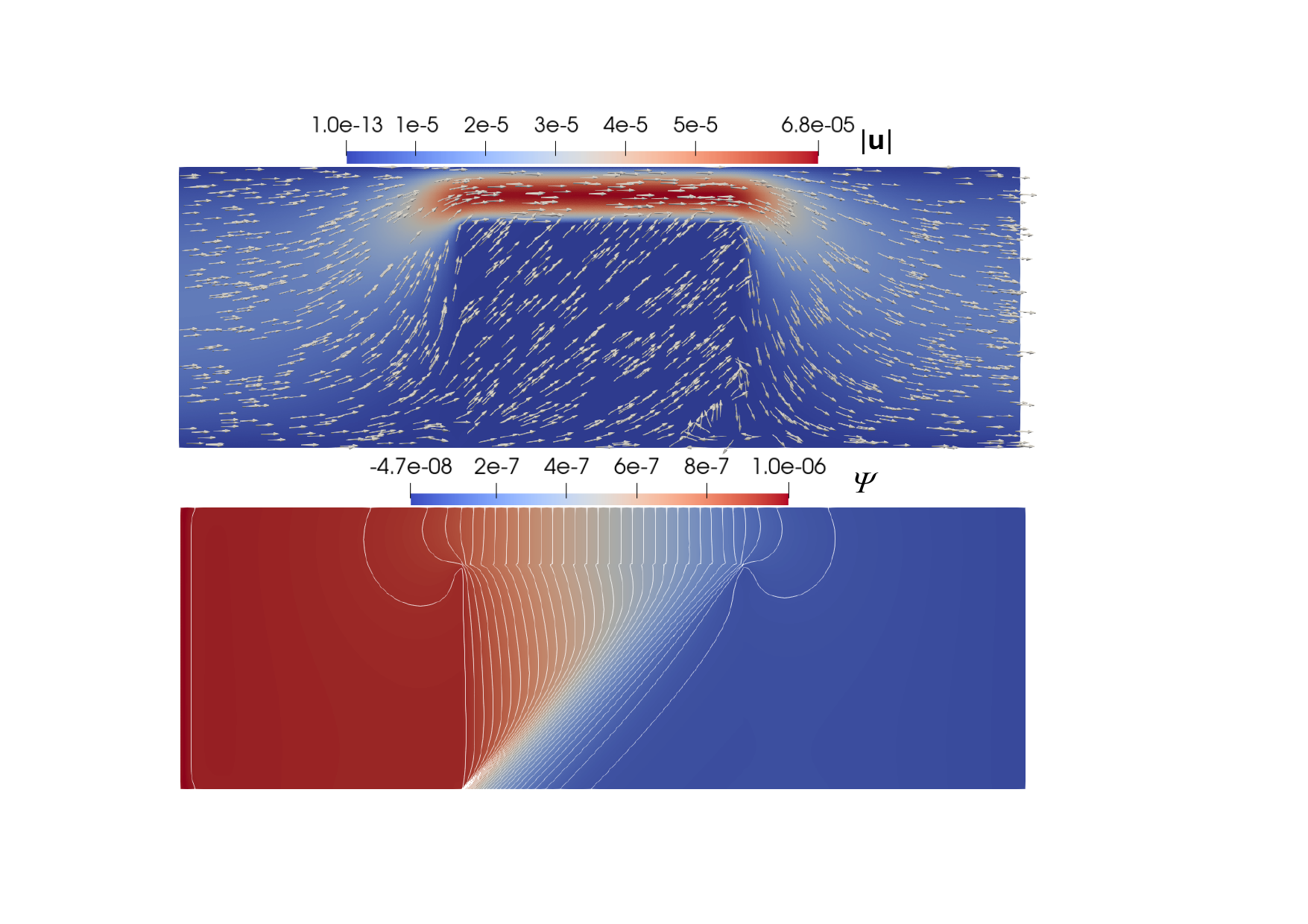}  \includegraphics[width=0.46\linewidth]{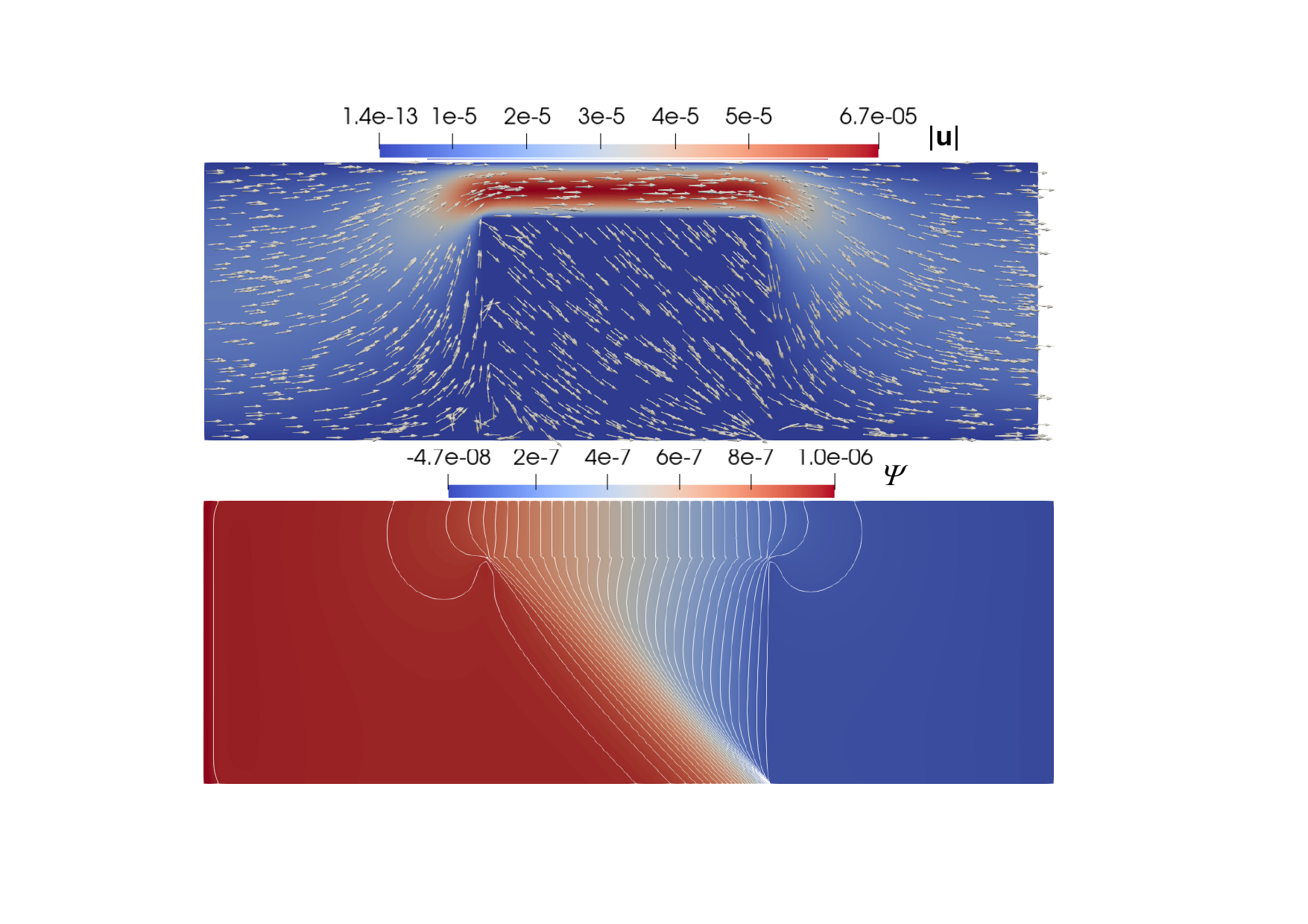}
	\caption{Test 2, homogeneous porous medium, \(k=1\e{-06}\), \(\beta = 100\).
		Computed velocity (top) and pressure (bottom) with method 1. Left: \(\alpha= -\pi / 4 \), right: \(\alpha= \pi / 4 \).}
	\label{fig:test_2_2}
\end{figure}

In \cref{fig:test_2_3} we compare the velocity fields computed with method 1 (top) and method 2 (bottom). We observe irregular and non-physical vortical structures with method 2. We also ran a simulation using method 2 with $\Delta t = 0.2$ s, obtaining results very similar to these on the top row of \cref{fig:test_2_3} produced with method 1 and $\Delta t = 4$ s. These results illustrate that, while method 2 is convergent, it may exhibit non-physical behavior for large time steps and requires smaller time steps than method 1 to produce an accurate solution. This is consistent with the fact that method 2 does not include the permeability correction term in the \PjP\ and the pressure gradient update, cf. \eqref{eq:Projecproblem_1-simpl}--\eqref{eq:up_q-simpl}, which results in additional time-splitting error.

\begin{figure}
	\centering
	\includegraphics[width=0.475\linewidth]{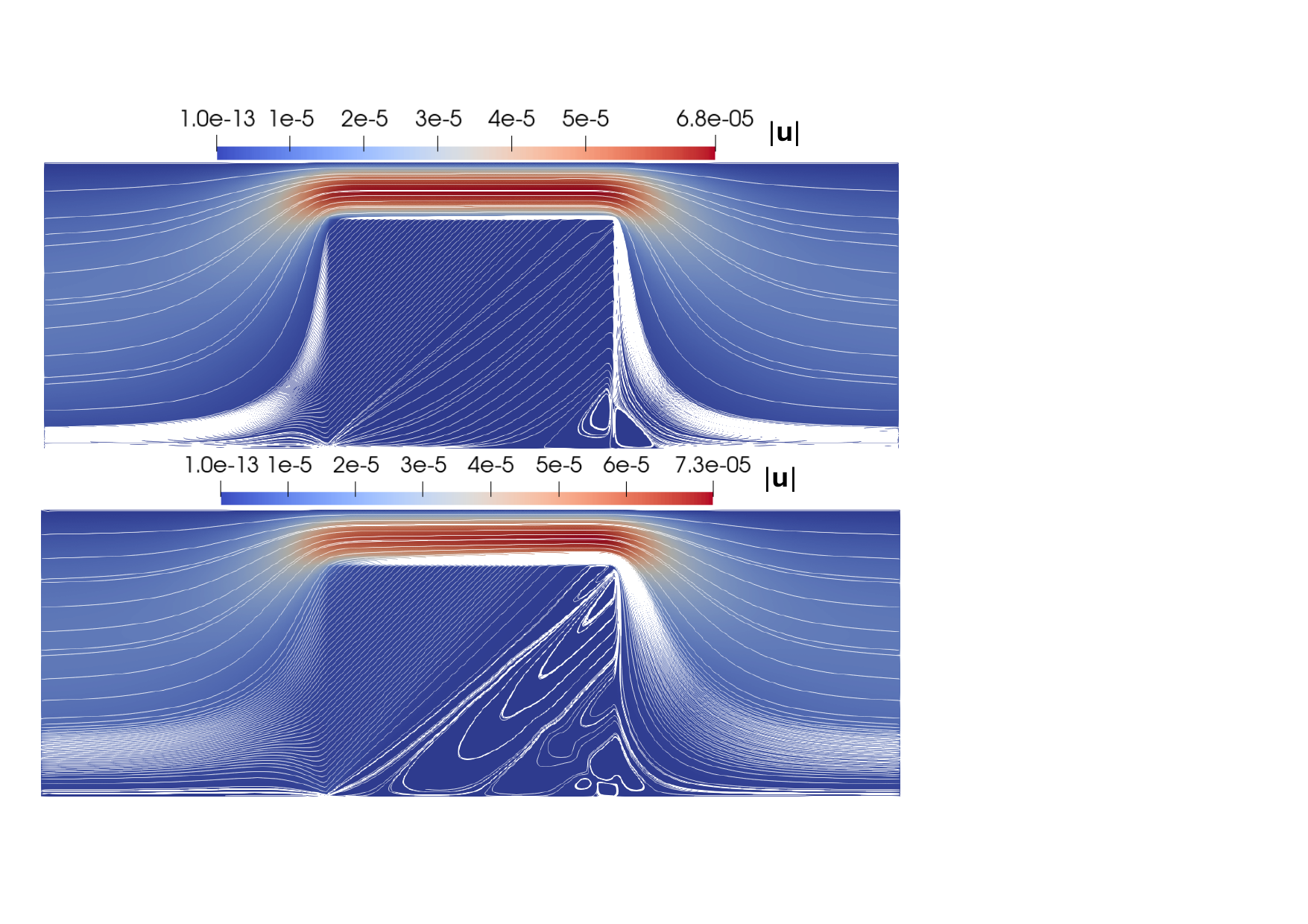}  \includegraphics[width=0.475\linewidth]{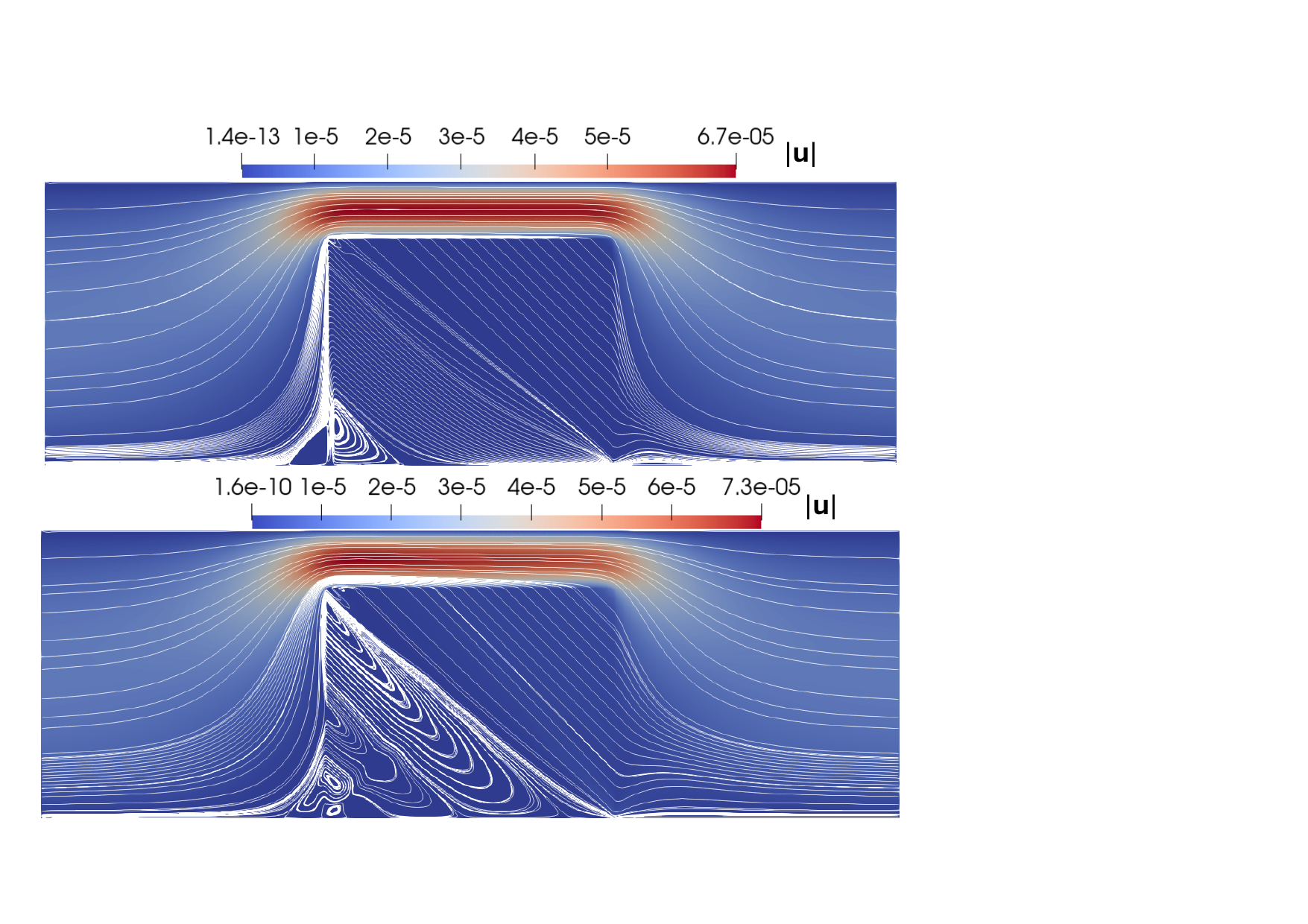}
	\caption{Test 2, homogeneous porous medium, \(k=1\e{-06}\), \(\beta = 100\).
		Computed velocity with method 1 (top) and method 2 (bottom). Left: \(\alpha= -\pi / 4 \), right: \(\alpha= \pi / 4 \).}
	\label{fig:test_2_3}
\end{figure}

In \cref{fig:test_2_4} we show the results obtained by method 1 in the case of a strong vertical anisotropy, with \(\alpha = 0\), \(k =1\e{-06}\) \msq\ and \(\beta=1000\), in which case the horizontal permeability is three orders of magnitude smaller than the vertical permeability. The fluid enters the filter primarily from the left side, mostly vertically upward oriented, concentrated in very thin stripe along the porous block boundary, see \cref{fig:test_2_4} (top-left, top-right, and bottom-tight). This generates there a pressure gradient downward oriented, which explains the local minimum in the pressure field close to the left upper corner, see \cref{fig:test_2_4} (bottom-left and top-right). Due to the high vertical permeability and the local pressure minimum, part of the channelized flow enters the porous medium from the top, downward oriented, close to the left upper corner, with velocity magnitude 2-3 magnitude orders smaller than in the region along to the left boundary, see \cref{fig:test_2_4} (bottom-right). The opposite occurs close to the right boundary of the filter. A small amount of the flow exits at the top near the upper right corner, upward oriented, while most of the flow exits on the right side, downward oriented, concentrated in a thin stripe along the right boundary. The flow generates a strong pressure gradient upward oriented, which explains the local maximum in the pressure field close to the right upper corner. In the interior of the filter the flow is mostly horizontal, but its magnitude is much smaller compared to the flow along the vertical boundaries. The sharp change of the velocity field in the horizontal direction results in a pressure jump across the vertical boundaries, which is clearly evident in \cref{fig:test_2_4} (top-right). This behavior is consistent with the balance of force interface condition in the two-domain Stokes--Darcy model. The pressure contour lines are vertically oriented and aligned with the anisotropy direction. 
We obtained very similar results using smaller time step sizes \(\Delta t\), not shown here for brevity. This example illustrates the ability of the method to produce a physically accurate solution in a challenging setting with strong permeability anisotropy and complex flow behavior.

\begin{figure}
	\centering
	\includegraphics[width=.55\textwidth]{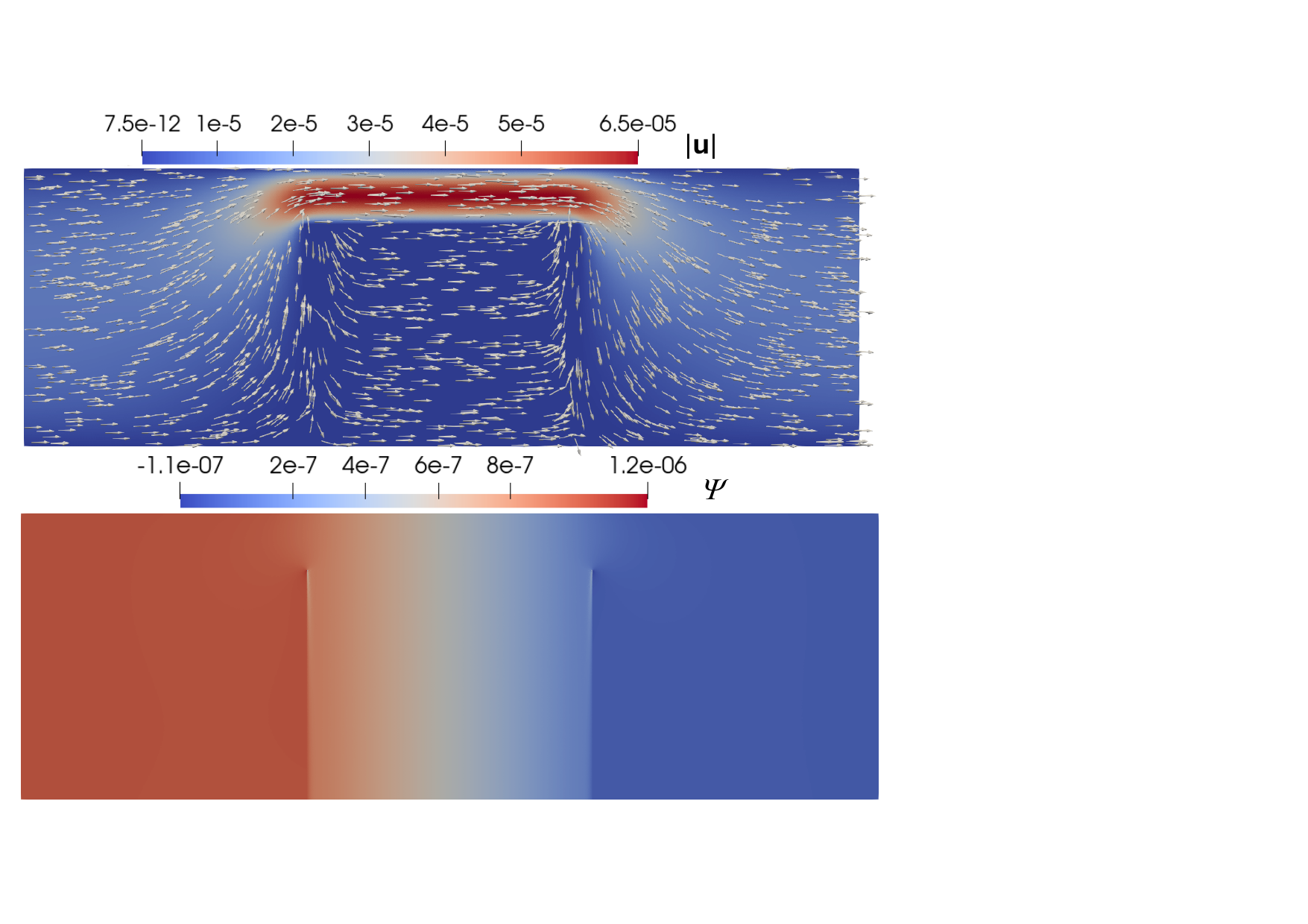}
	\includegraphics[width=.36\textwidth]{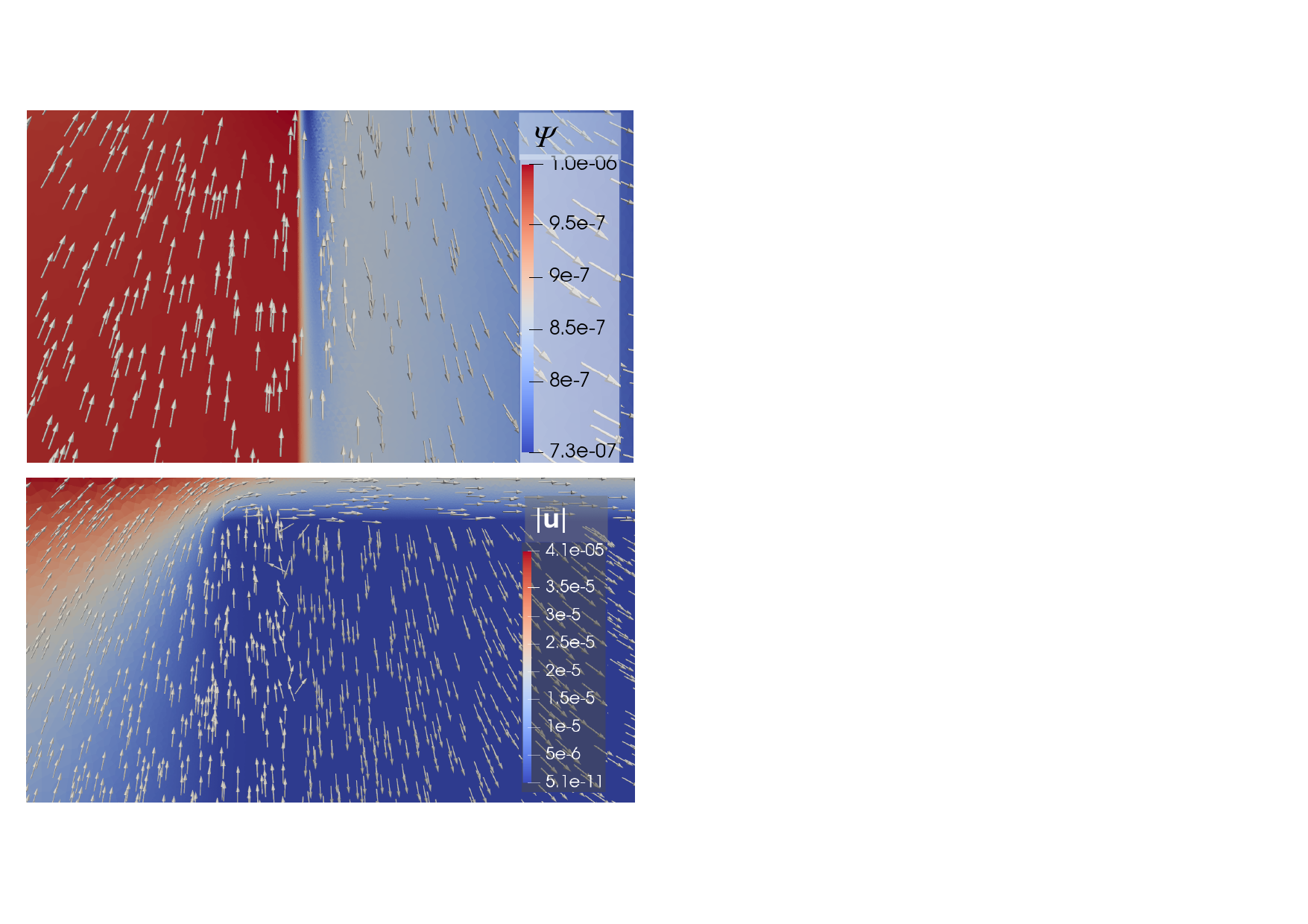}
	\caption{Test 2, homogeneous porous medium, \(k=1\e{-06}\), \(\alpha= 0 \), \(\beta = 1000\). Computed velocity (top-left) and pressure (bottom-left) with method 1. Zooms of the velocity vectors and pressure field close to the left side of the filter (top-right) and close to the upper left corner of the filter (bottom-right).}
	\label{fig:test_2_4}
\end{figure}

In \cref{fig:test_2_5} we compare the velocity fields in the case of the strong vertical anisotropy, (\(k=1\e{-06}\) \msq, \(\alpha=0\), \(\beta = 1000\)),
computed by method 1 (top, left) and method 2 (bottom, left), using \(\Delta t\) = 2 s in method 2. While the general flow behavior is similar with both methods, we observe non-physical circular patterns in the velocity field computed with method 2. The zoom in of the pressure gradient and velocity vectors in \cref{fig:test_2_5} (right) show that, while the pressure gradients in method 2 are aligned with the permeability anisotropy, the velocity vectors are not. We note that the solution obtained with method 2 using smaller time step of size \(\Delta t = 0.01\) s resembles the one obtained with method 1 using \(\Delta t = 4\) s. 

\begin{figure}
	\centering
	\includegraphics[width=0.55\textwidth]{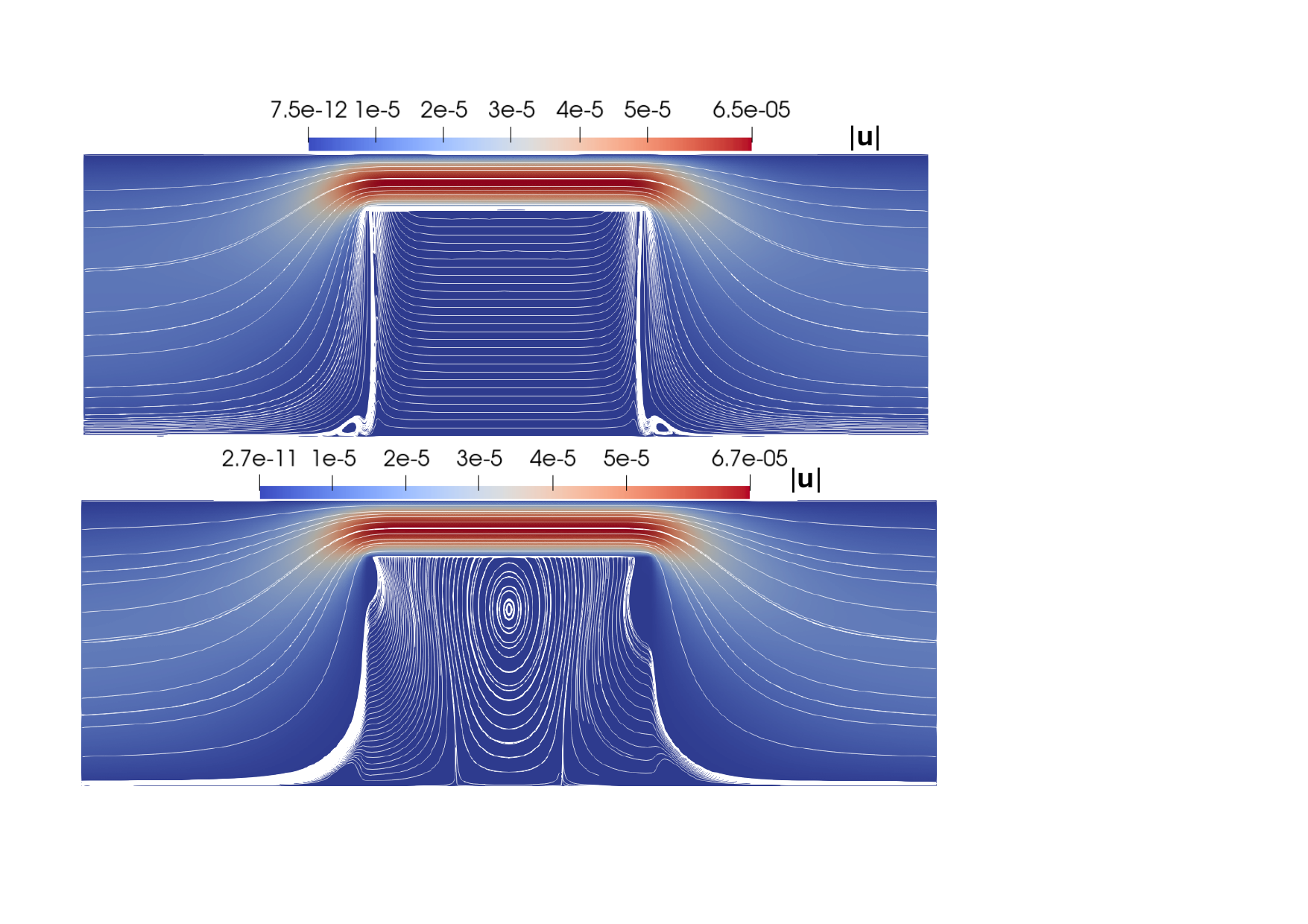} \ \includegraphics[width=0.368\textwidth]{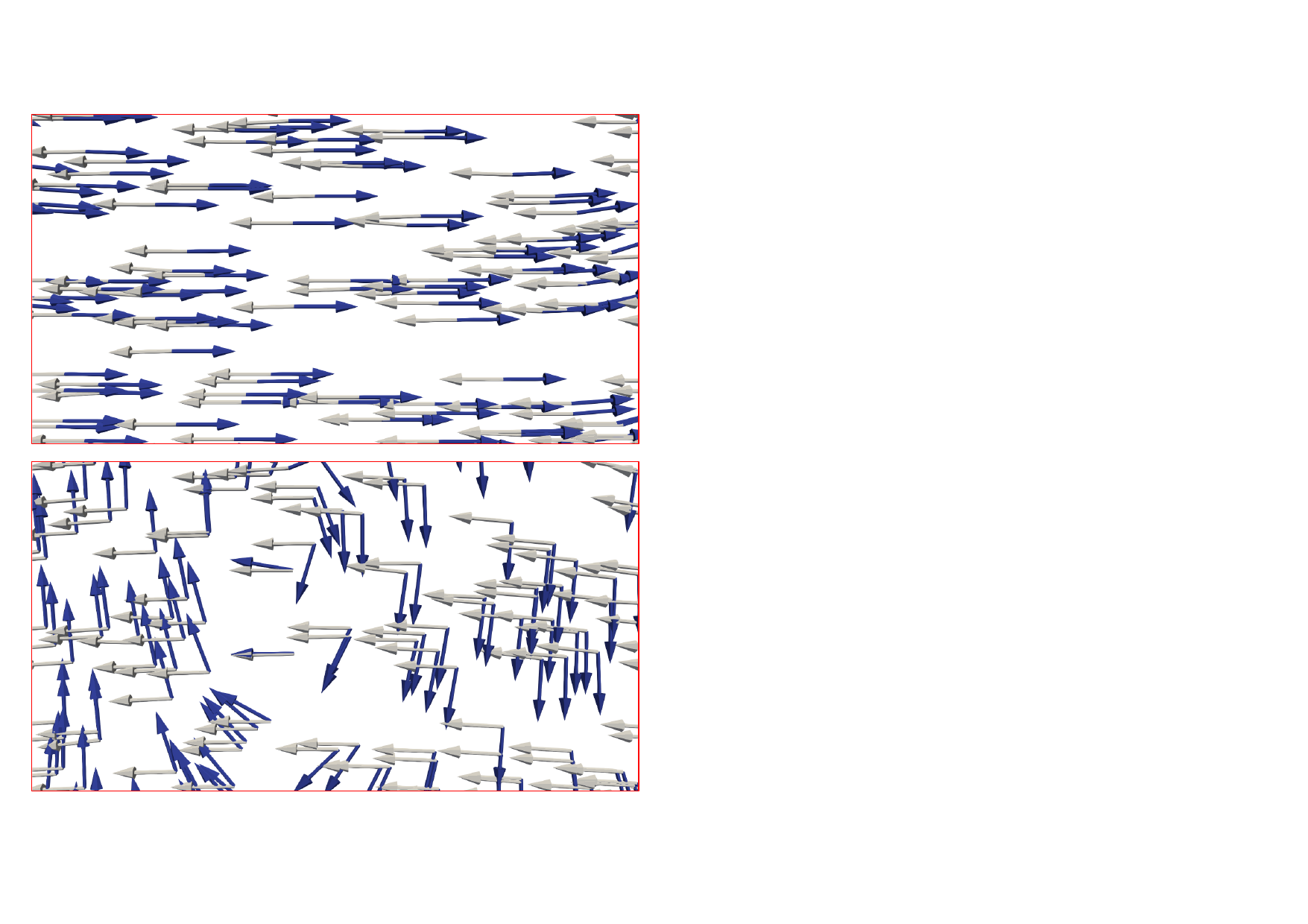}
	\caption{Test 2, homogeneous porous medium, \(k=1\e{-06}\), \(\alpha= 0 \), \(\beta = 1000\). Left: computed velocity field with method 1 (top) and method 2 (bottom). Right: zoom in of the pressure gradient (gray) and velocity (blue) vectors computed with method 1 (top) and method 2 (bottom). }
	\label{fig:test_2_5}
\end{figure}

\subsubsection{Heterogeneous porous medium} \label{hetero}
We investigate the capabilities of the proposed scheme to simulate flows in porous medium with strong contrast of the porosity and permeability tensor coefficients. We consider a porous filter composed by an outer isotropic region (\(\phi=0.7\), \(\alpha = 0\), \(\beta = 1\), \(k=1\e{-06}\)), with an inner anisotropic area \(\Omega_{in}= \left(0.32, 0.385\right) \times \left(0.5, 0.12\right)\), see \cref{fig_test2_sett} (right). In \(\Omega_{in}\) we set
\(\phi=0.4\), \(\alpha = \pi/4\), \(\beta = 100\) and \(k=1\e{-08}\) \msq,
resulting in a jump in the values of the porosity and permeability tensor coefficients across its boundary. In \cref{fig:test_2_6} we show the velocity and pressure fields computed by method 1. The solver properly simulates the physical process: the velocity streamlines and the pressure contour lines deviate where the flow encounters the inner anisotropic region. We also compare in \cref{fig:test_2_7} the velocity fields computed with method 1 (top) and method 2 (bottom). While method 2 produces a similar velocity field in the isotropic porous region, in the anisotropic region we observe some non-physical streamlines deviations and vorticities.     

\begin{figure}
	\centering
	\includegraphics[width=0.47\textwidth]{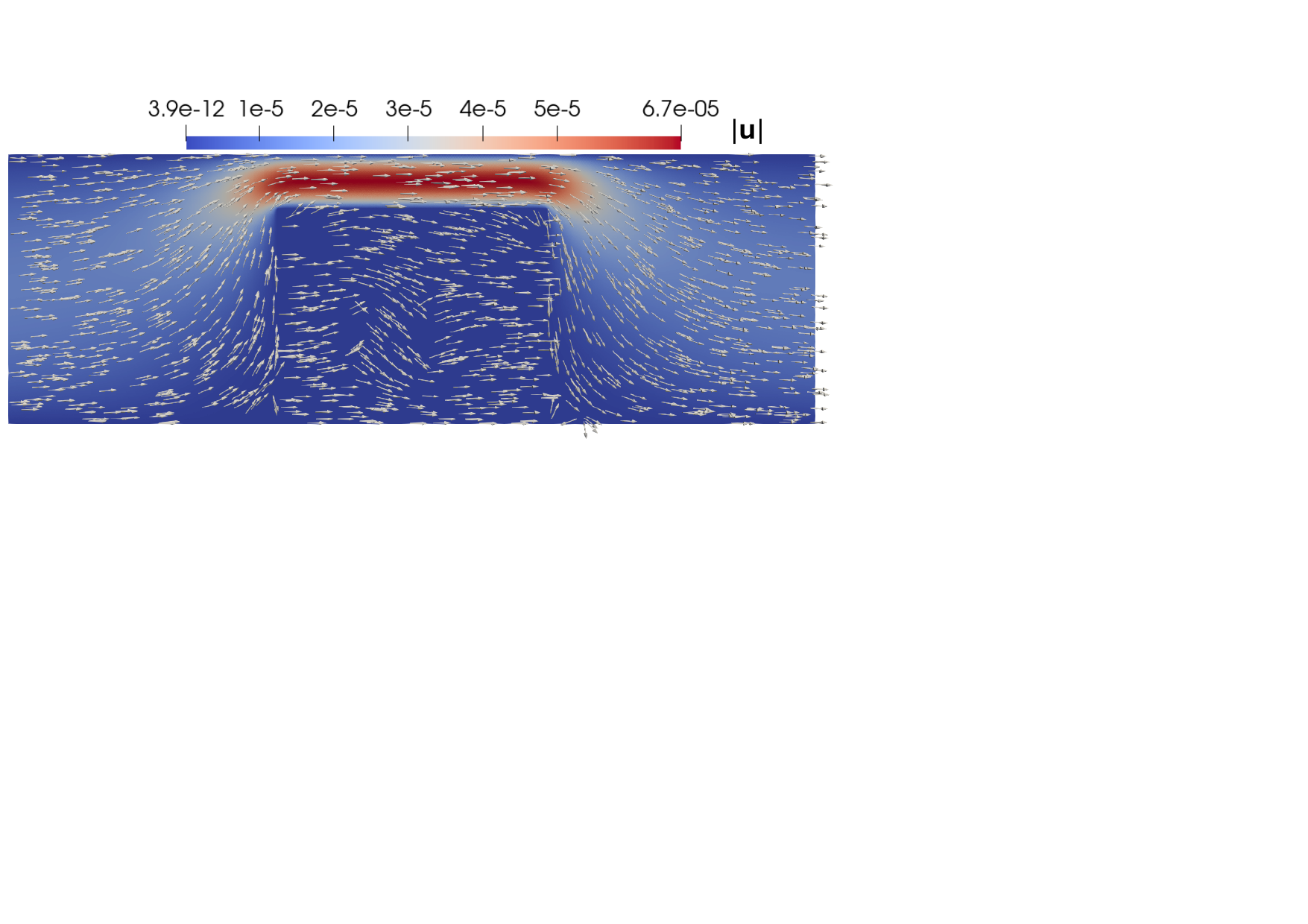}
	\includegraphics[width=0.47\textwidth]{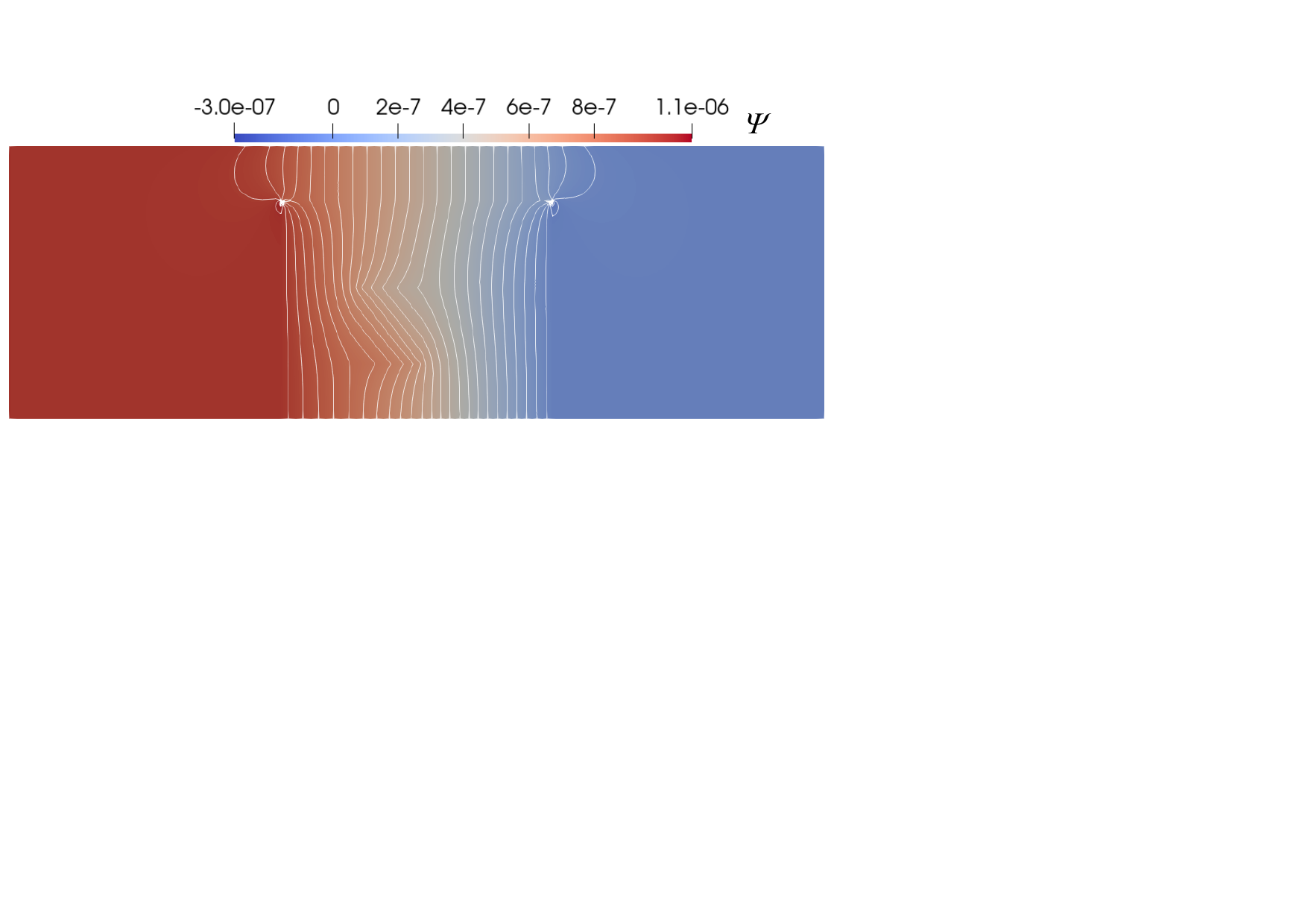}
	\caption{Test 2, heterogeneous porous medium. Computed velocity (left) and pressure (right) with method 1.}
	\label{fig:test_2_6}
\end{figure}

\begin{figure}
	\centering
	\includegraphics[width=0.55\textwidth]{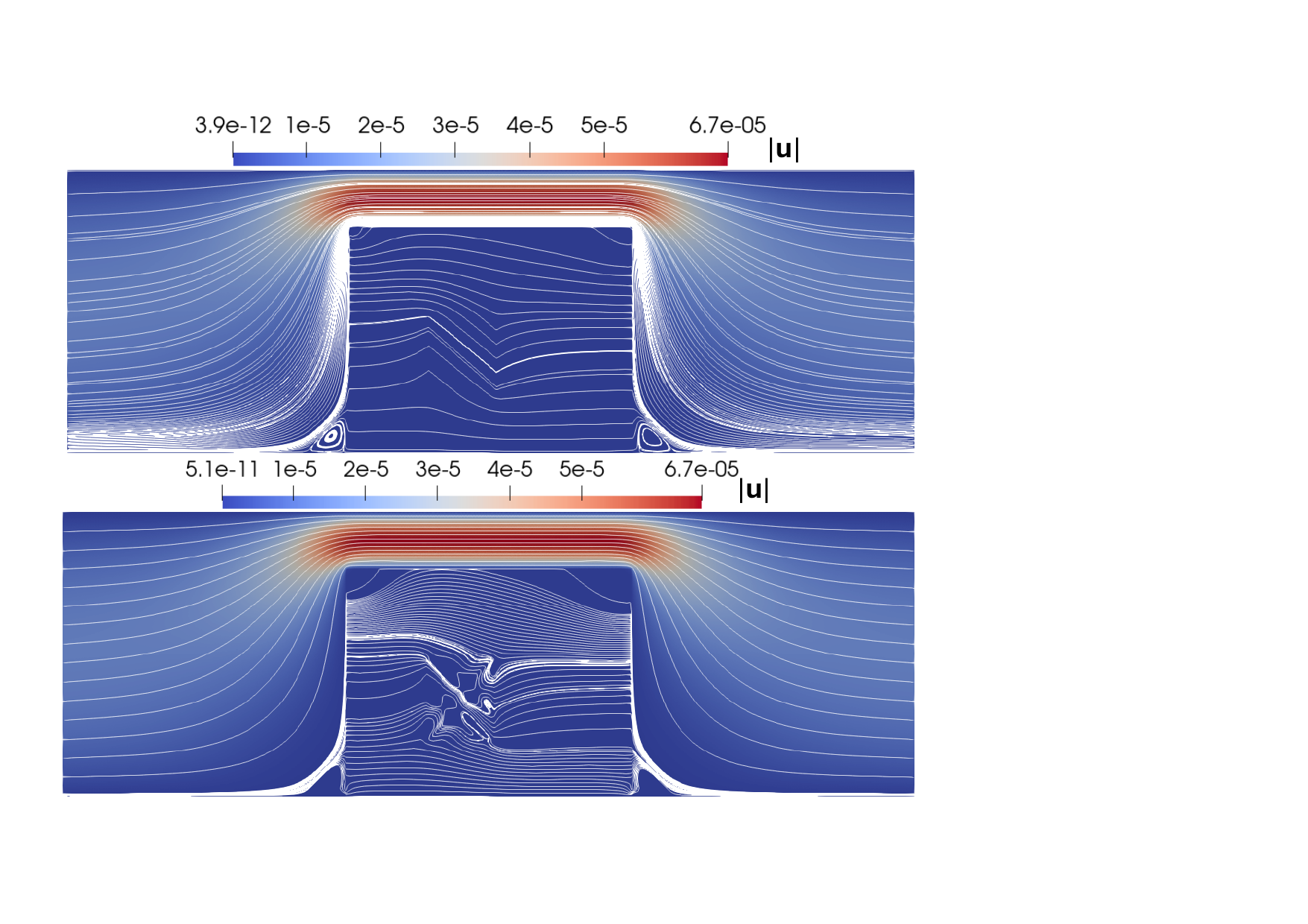} \ \includegraphics[width=0.368\textwidth]{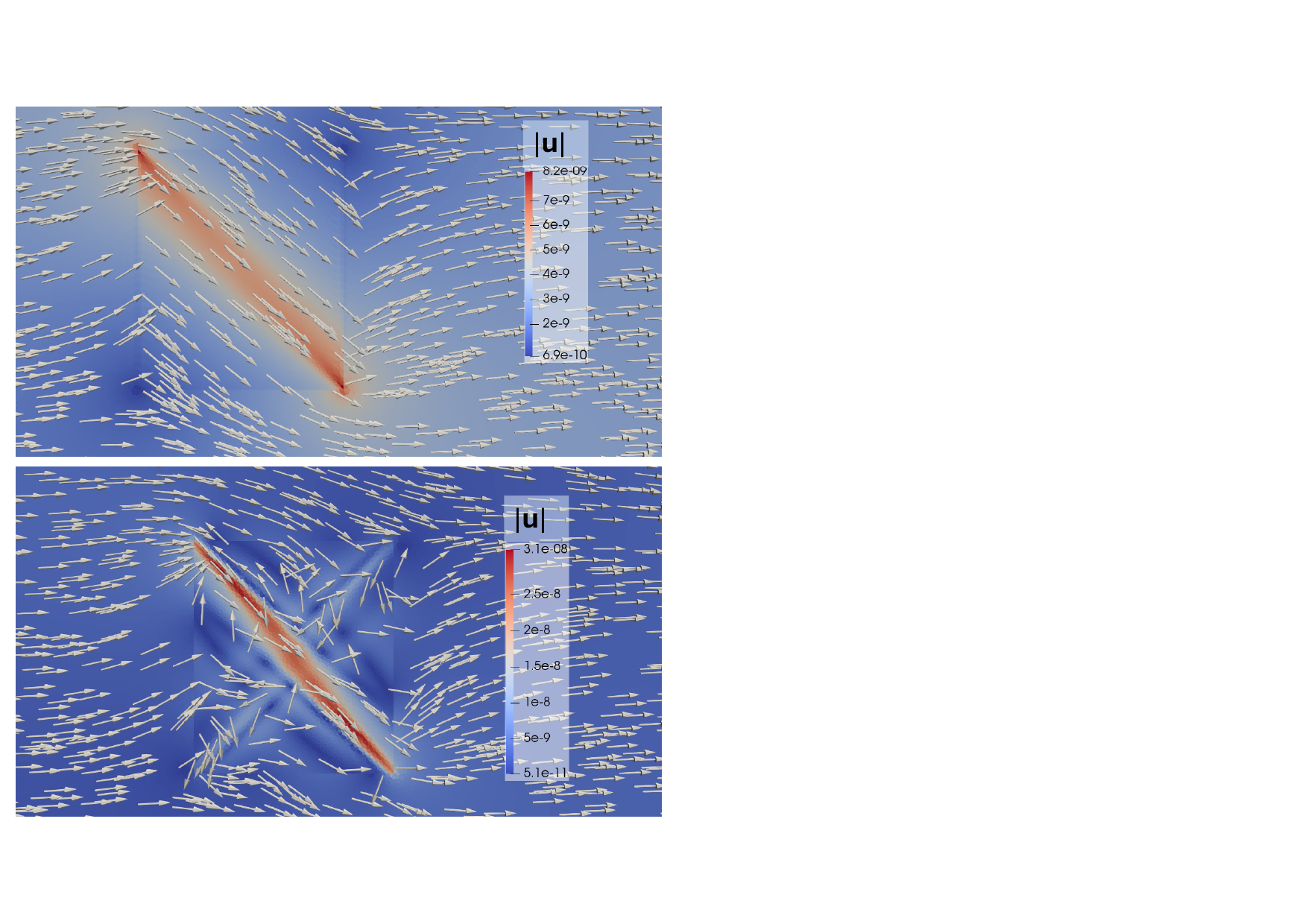}
	\caption{Test 2, heterogeneous  porous medium. Velocity field computed with method 1 (top) and method 2 (bottom).}
	\label{fig:test_2_7}
\end{figure}

\subsection{Test 3: interaction of a free fluid with a porous region with a steps-like interface}  \label{test3}

The setting of this test, presented in \cite{CHEN2014650}, is shown in \cref{fig_test3_sett} (left). A polygonal interface with three steps divides the (dimensionless) computational domain \( \Omega = (0, 2) \times (0, 2)\) in a fluid and a porous region. The Dirichlet boundary $\Gamma_d$ consists of the top and right sides of \(\Omega\), with no-slip BCs on the top side and (dimensionless) Poiseuille velocity profile \(\mathbf{u}=\left(\frac{y}{2}\left(y-2\right),0\right)^T\) on the right side. On the rest of the boundary we set \(\boldsymbol{\Sigma}_b=0\).
The ICs are zero velocity and pressure. The (dimensionless) kinematic viscosity is \(\nu=1\). The medium is assumed to be isotropic with porosity and inverse permeability defined in \eqref{profiles}. The porosity parameters are $\phi_{max} = 1$ and \(\phi_{min}=0.4\). Two (dimensionless) values of the inverse of the permeability coefficient are considered, \(\mathfrak{K}^0_{i,i} = 100 \) and \(\mathfrak{K}^0_{i,i} = 10000 \), \(i=1,2\). According to \cite{CHEN2014650}, we set \(\theta_{\psi}=\theta_{\mathfrak{K}_{i,j}}=100\) in the transition zone between the fluid and porous region. The Reynolds number in the fluid region, based on the maximum incoming velocity from the right side and the domain length side, is \(Re = 1\). In the porous region the maximum value of the Reynolds number is \(Re \simeq 0.24\) if \(\mathfrak{K}_{i,i} = 100 \) and \(Re \simeq 0.0132\) if \(\mathfrak{K}_{i,i} = 10000 \). We adopt a relatively coarse grid with (dimensionless) size ranging from 0.065 in the bulk fluid and porous regions to 0.02 in the transition region. The grid has 6421 triangles and 3281 vertices, see \cref{fig_test3_sett} (right), similar to the one used in \cite{CHEN2014650}. \par

\begin{figure}
	\centering
	\includegraphics[width=0.65\textwidth]{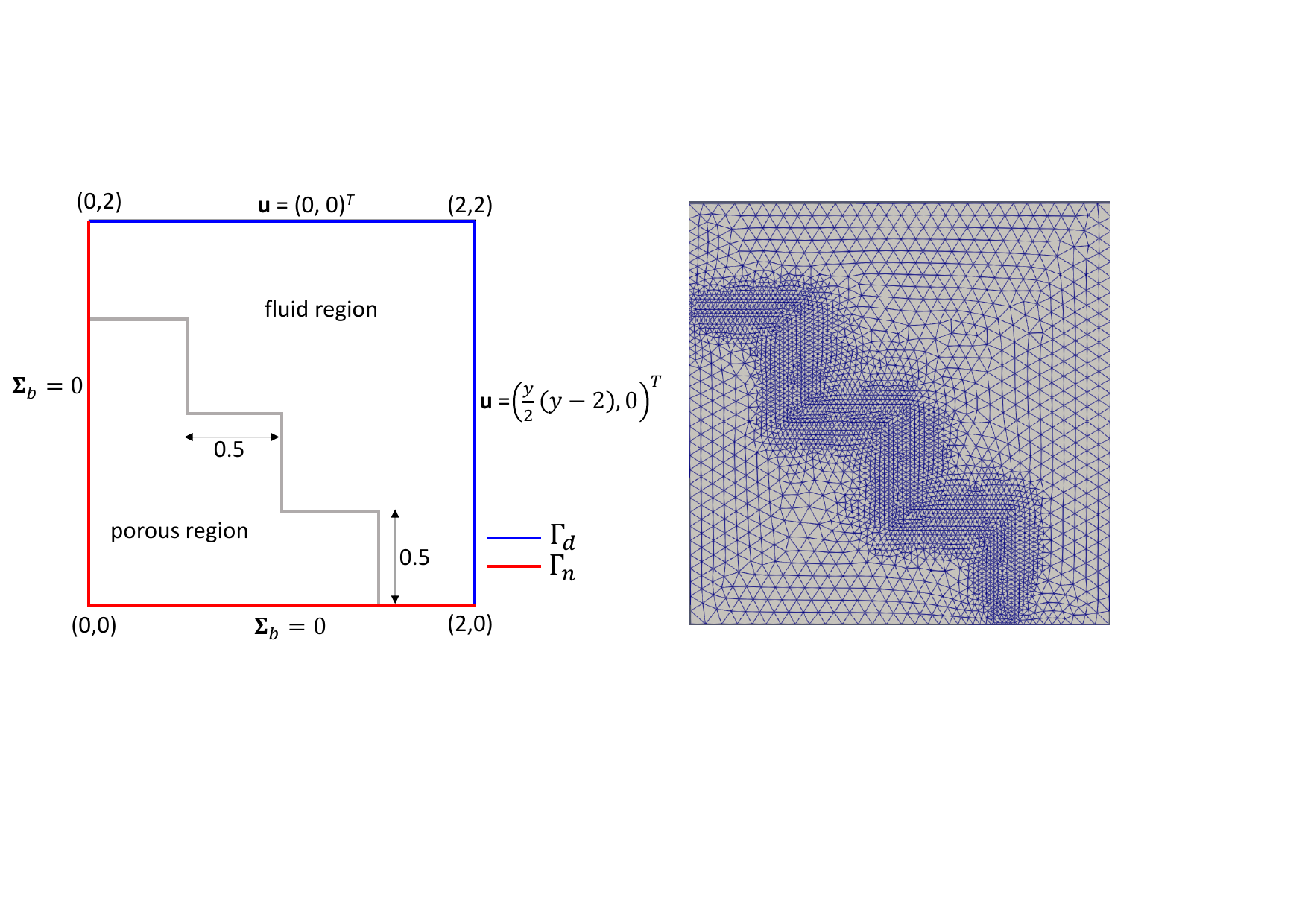}
	\caption{Test 3, settings of the numerical experiments (left) and computational grid (right).}
	\label{fig_test3_sett}
\end{figure}

In \cref{test3_velo_comps_k100,test3_velo_comps_k10000} we plot the computed (dimensionless) velocity components and the (dimensionless) kinematic pressure fields with the velocity vectors. The flow enters the domain from the right vertical side. Due to the polygonal porous medium, the flow is partly channelized in the upper fluid region and partly leaves from the bottom side near the right corner, oriented downward. The velocity components are in very good agreement with the results in \cite{CHEN2014650} (see Figures 7 and 9 of the referred paper), where the authors solve the Brinkman equations in the framework of a ODA using Taylor–Hood finite elements. In the case of smaller permeability of the porous medium ($\mathfrak{K}_{i,i} = 10000$), the flux leaving the domain downward oriented close to the right bottom step increases, see \cref{test3_velo_comps_k10000} (center), which explains the local increment of the pressure seen in \cref{test3_velo_comps_k10000} (right). We also observe higher deviations of the velocity vectors close to the interface with some vorticities. In \cref{test3_u_x_k10000} we compare the profiles of \(u_x\) at \(x=0.5\) in the case of \(\mathfrak{K}_{i,i} = 10000 \) computed by the present solver over the coarse grid (as detailed before), a much finer one (size ranging from 0.03 to 0.0075, with 30126 triangles and 15227 vertices) and the output obtained in \cite{CHEN2014650} (see Figure 11 in the referred paper). No evidence of any significant grid size effect is detected in the outputs of the present solver. The \(u_x\) profile provided in \cite{CHEN2014650} is slightly shifted down, with a small underestimation of the peak value.      

\begin{figure}
	\centering
	\includegraphics[width=0.325\textwidth]{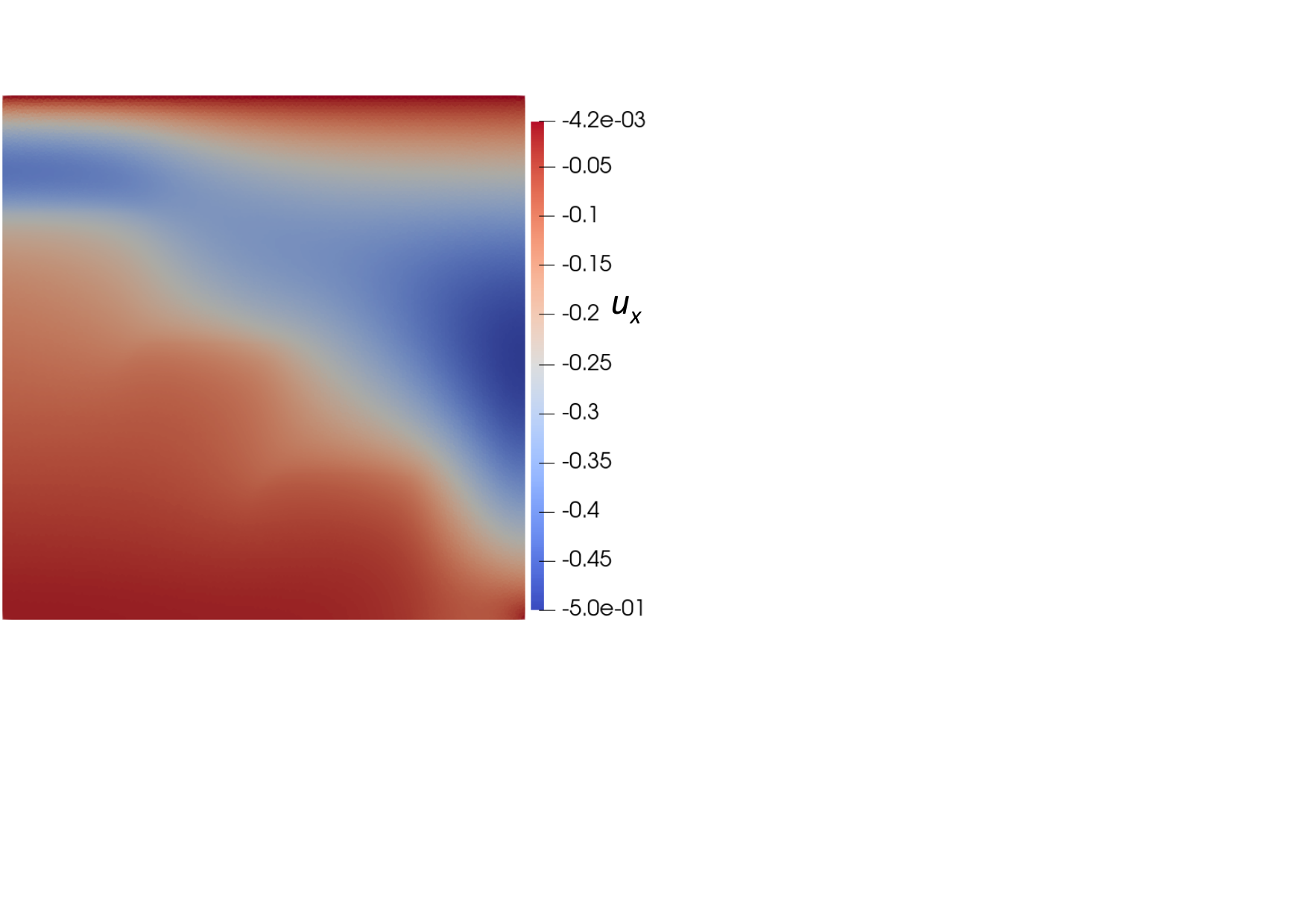}
	\includegraphics[width=0.325\textwidth]{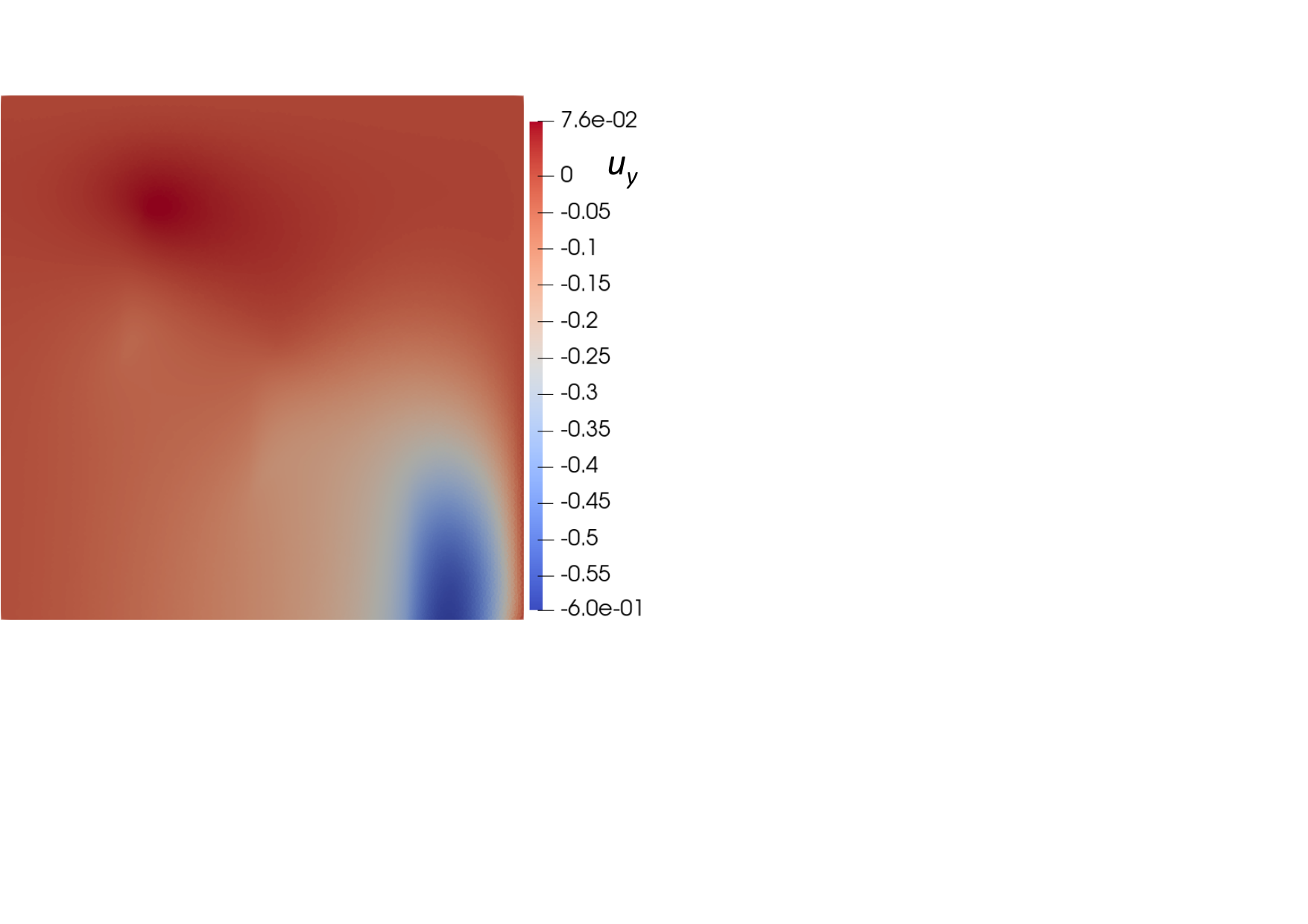}
	\includegraphics[width=0.325\textwidth]{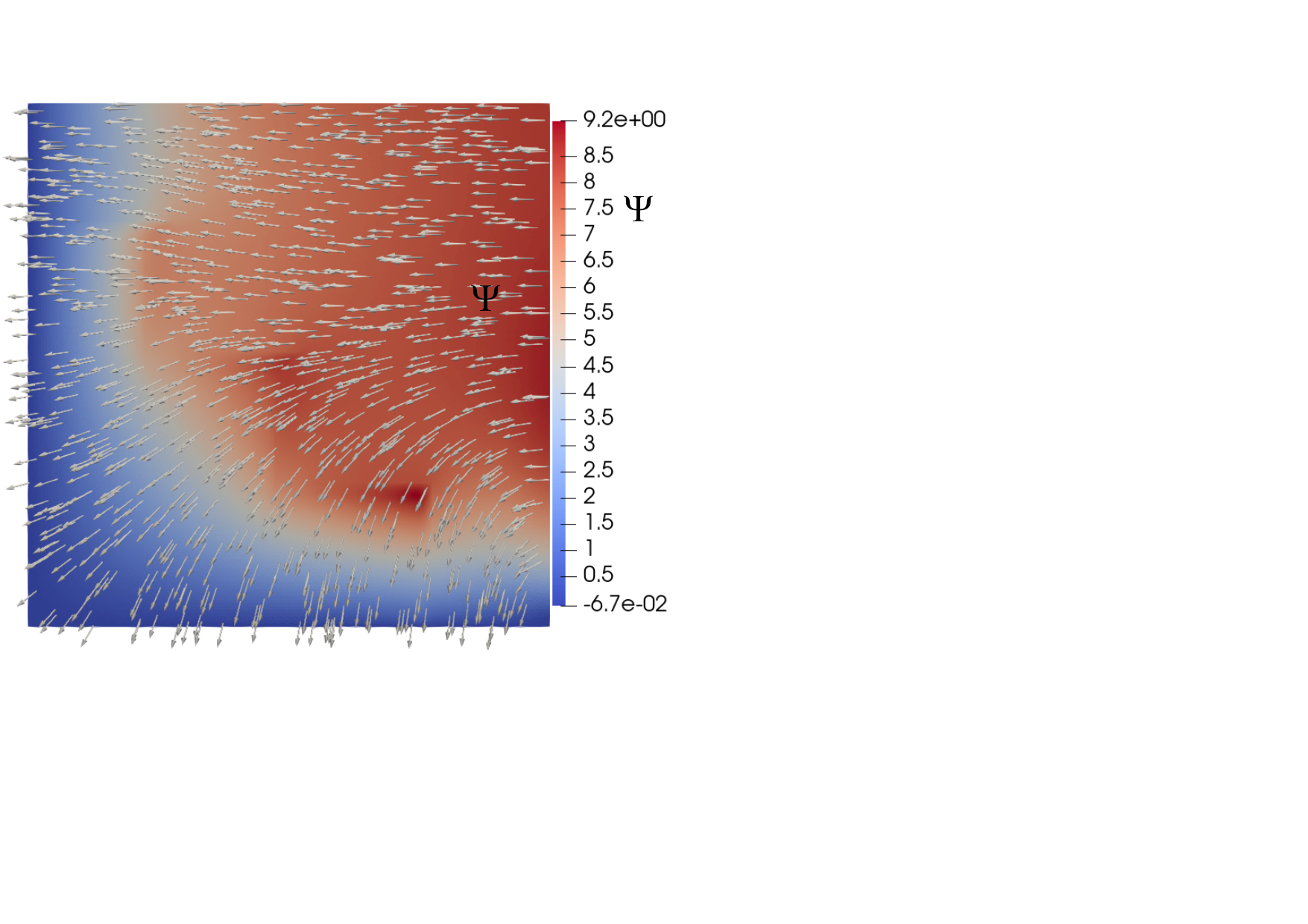}
	\caption{Test 3, computed \(u_x\) (left), \(u_y\) (middle), and \(\Psi\) and velocity vectors (right) with \(\mathfrak{K}_{i,i} = 100\).}
	\label{test3_velo_comps_k100}
\end{figure}

\begin{figure}
	\centering
	\includegraphics[width=0.325\textwidth]{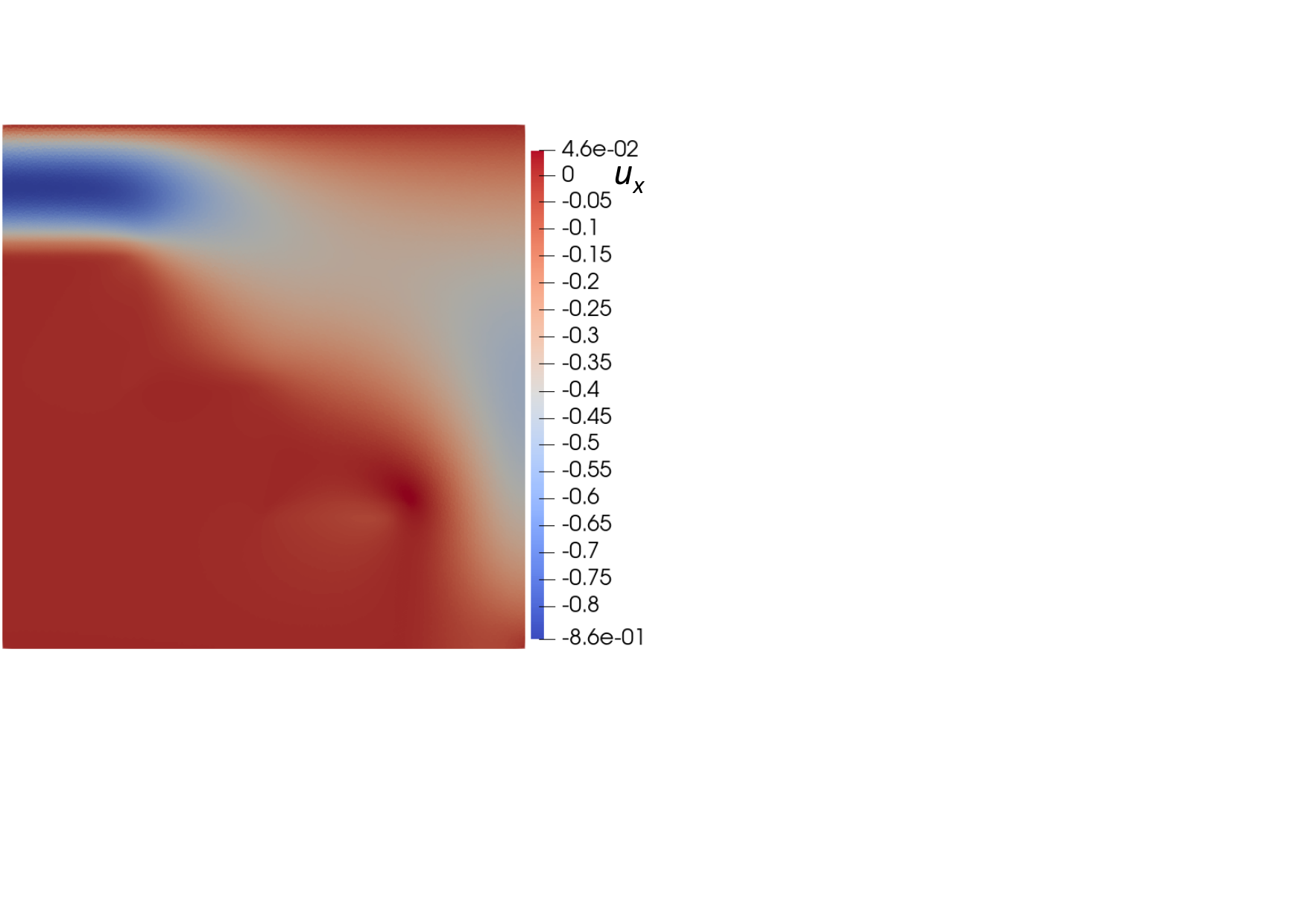}
	\includegraphics[width=0.325\textwidth]{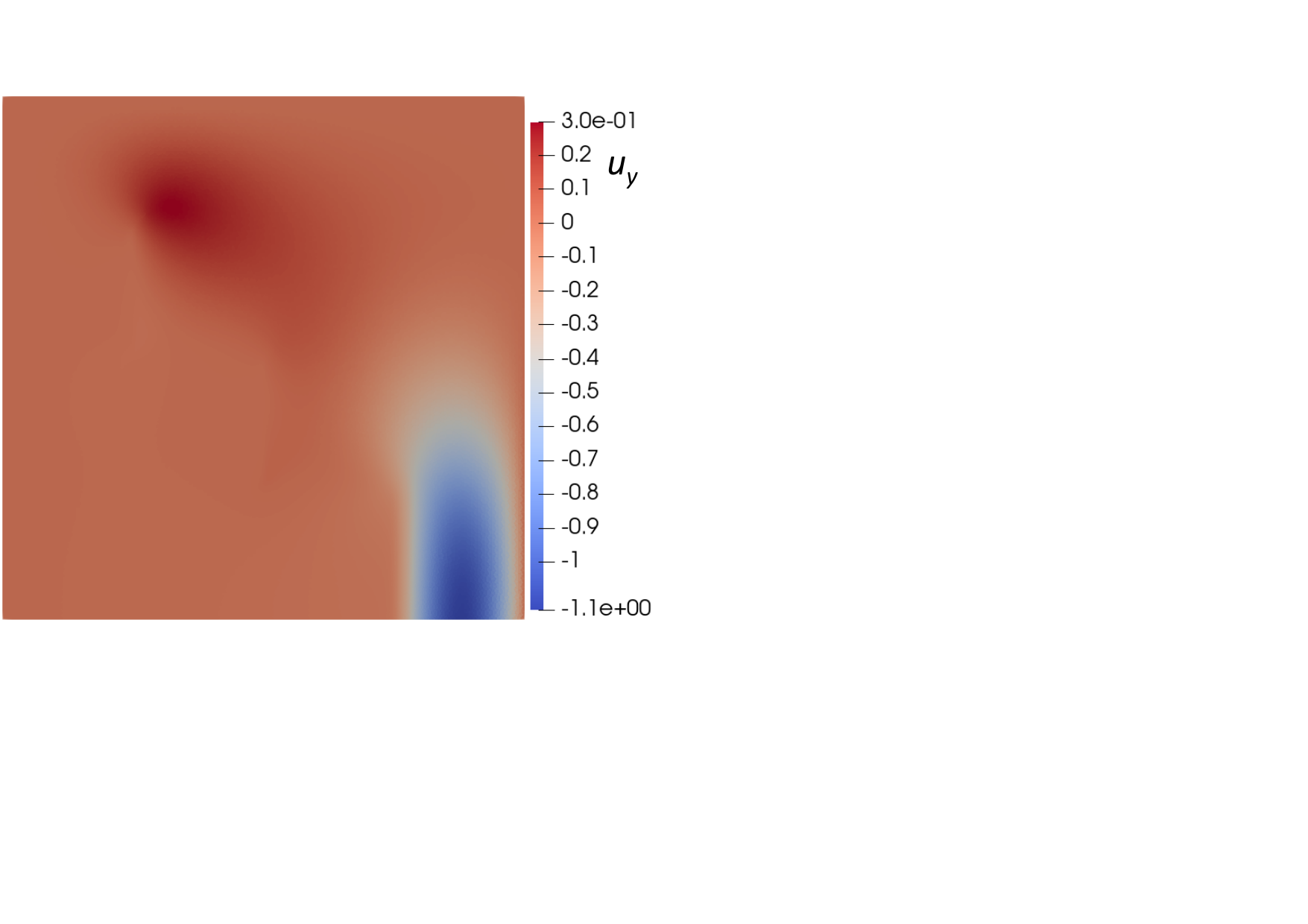}
	\includegraphics[width=0.325\textwidth]{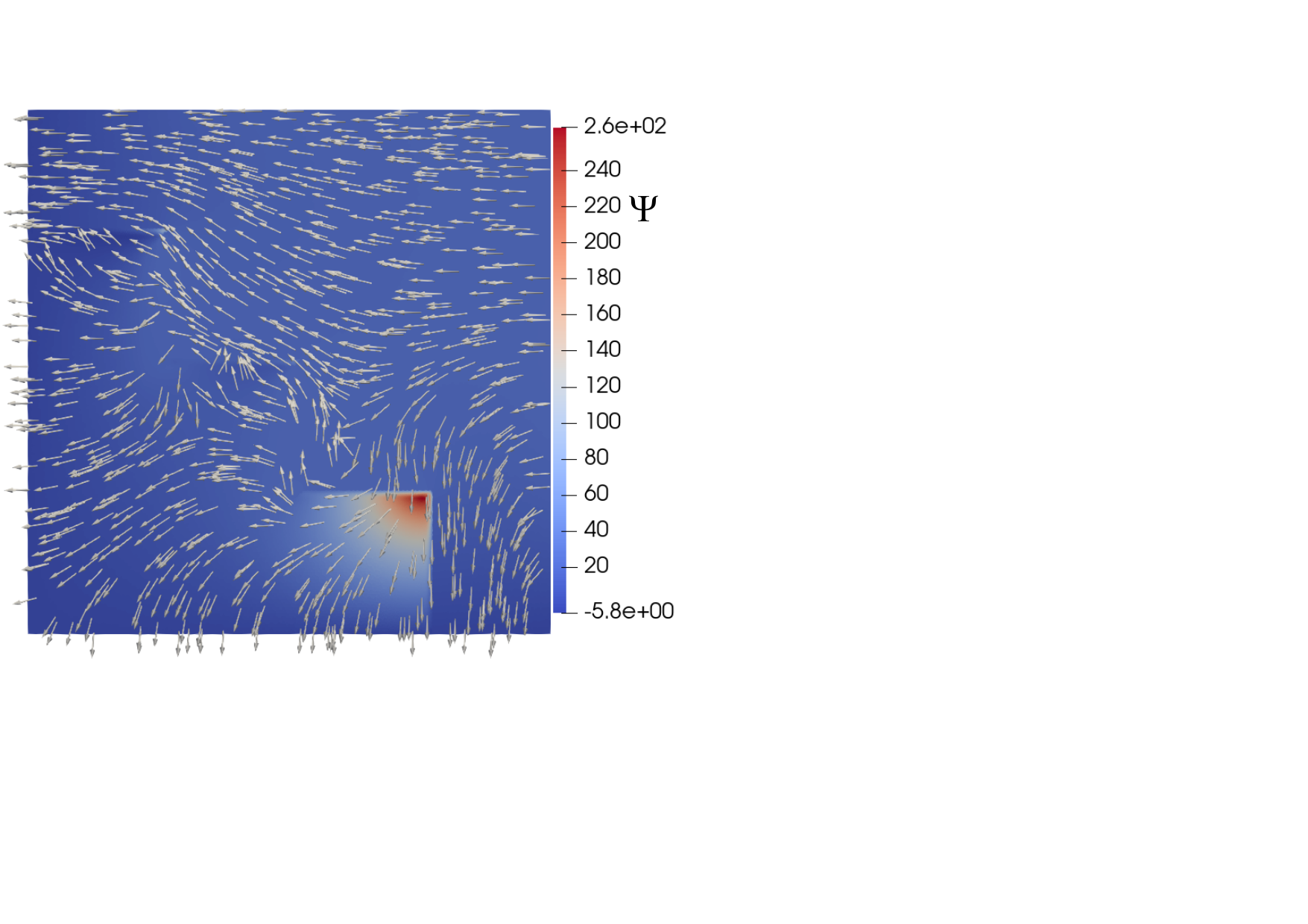}
	\caption{Test 3, computed \(u_x\) (left), \(u_y\) (middle), and \(\Psi\) and velocity vectors (right) with \(\mathfrak{K}_{i,i} = 10000\).}
	\label{test3_velo_comps_k10000}
\end{figure}

\begin{figure}
	\centering
	\includegraphics[width=0.43\textwidth]{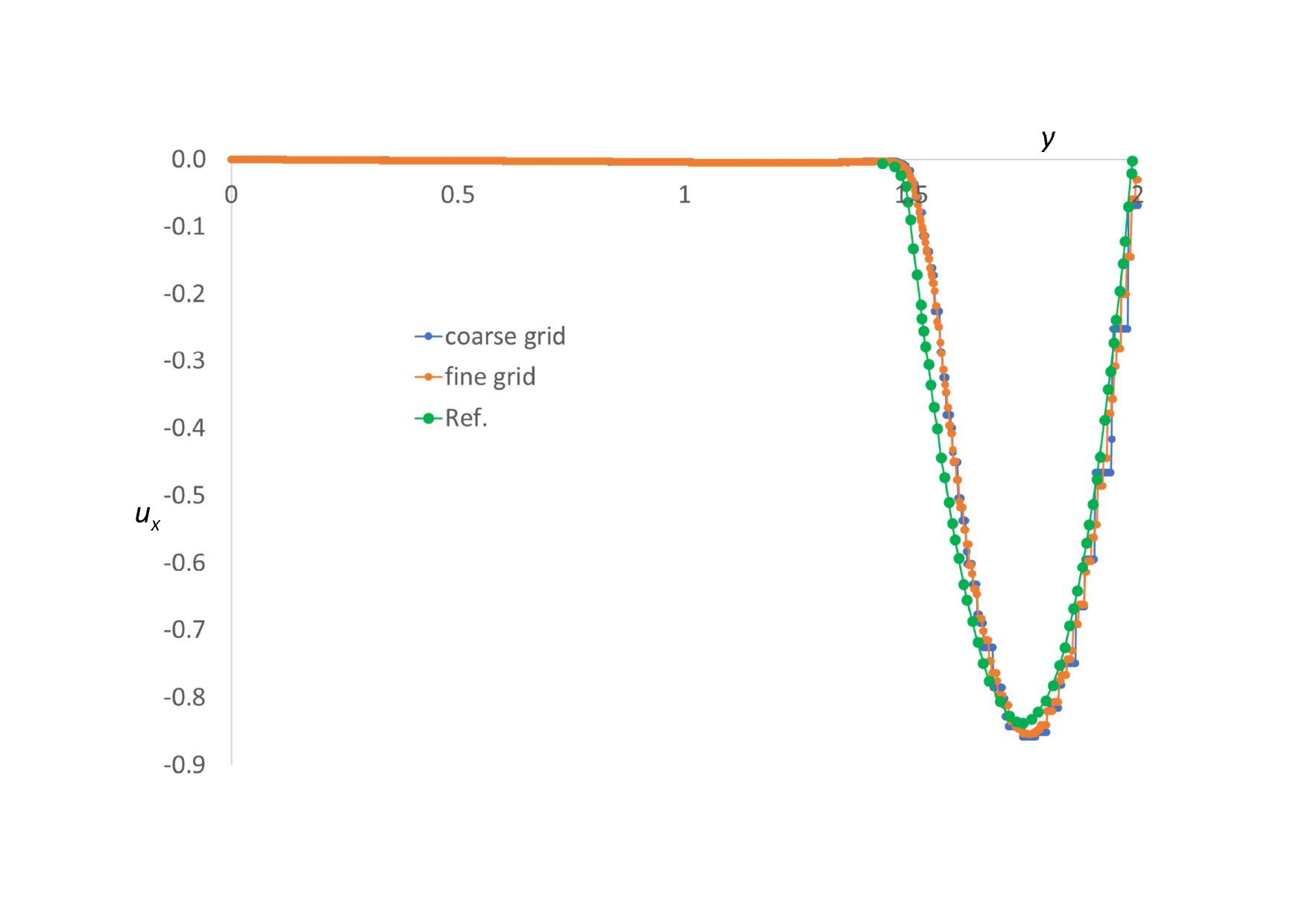}
	\caption{Test 3, profiles of \(u_x\), \(\mathfrak{K}_{i,i} = 10000\). Nomenclature. ``coarse grid'' and ``fine grid'': results of the present numerical method; ``Ref.'': results provided in \cite{CHEN2014650}.}
	\label{test3_u_x_k10000}
\end{figure}

\subsection{Test 4: Study of an intracranial aneurysm (ICA) with an aneurysmatic sac filled by a porous medium} \label{Test 4}
The last test is a ``show-case'' application, where a porous medium fills the sac of an intracranial aneurysm (ICA). ICAs are dilatation of the arterial walls, generally formed in the circle of Willis, which may easily rupture, with consequent serious brain damages, hemorrhages, and high risk of mortality. During the last decade, a technology based on the use of porous media like shape memory polymer foams has been proposed as a novel endovascular ICA treatment, see, \eg \cite{Cabaniss2025} and references therein. The shape memory property allows the polymers to be compressed into a catheter embedded into the ICA's cavity, so that they expand via recovery activation, occupying the initial programmed ICA geometry, occluding the aneurysm cavity, and recovering the original cerebral parent vessel shape. This leads to a strong reduction of the blood circulation in the aneurysm cavity and stress on the internal walls of the sac. Clinical advantages have emerged over surgical treatments and the more traditional coiling techniques, the latter often involving a partial filling of the sac by the porous foam \cite{Cabaniss2025}. \par

We consider a real patient-specific geometry case-id ``C0075'' (patient-id ``P0235'') of the open-source web repository ``Aneurisk'' \cite{AneuriskWeb}, see the computational domain in \cref{test_4_sett}, left. The volume of the aneurysmatic sac is approximately 1180 \(\textnormal{mm}^3\), one of the largest values in the web repository, and the diameter of the parent vessels is in the range \(\left[1.7\e{-03}, 3.4\e{-03}\right]\) m. The computational grid has been created by the open-source VMTK toolkit \cite{Antiga2008} and Netgen \cite{Schberl1997NETGENAA}, with 42915 vertices and 176102 tetrahedra with maximum aspect ratio 6.5. This application is presented to show the capability of the method to handle unsteady flows in very irregular domains. \par

\begin{figure}
	\centering
	\includegraphics[width=0.36\linewidth]{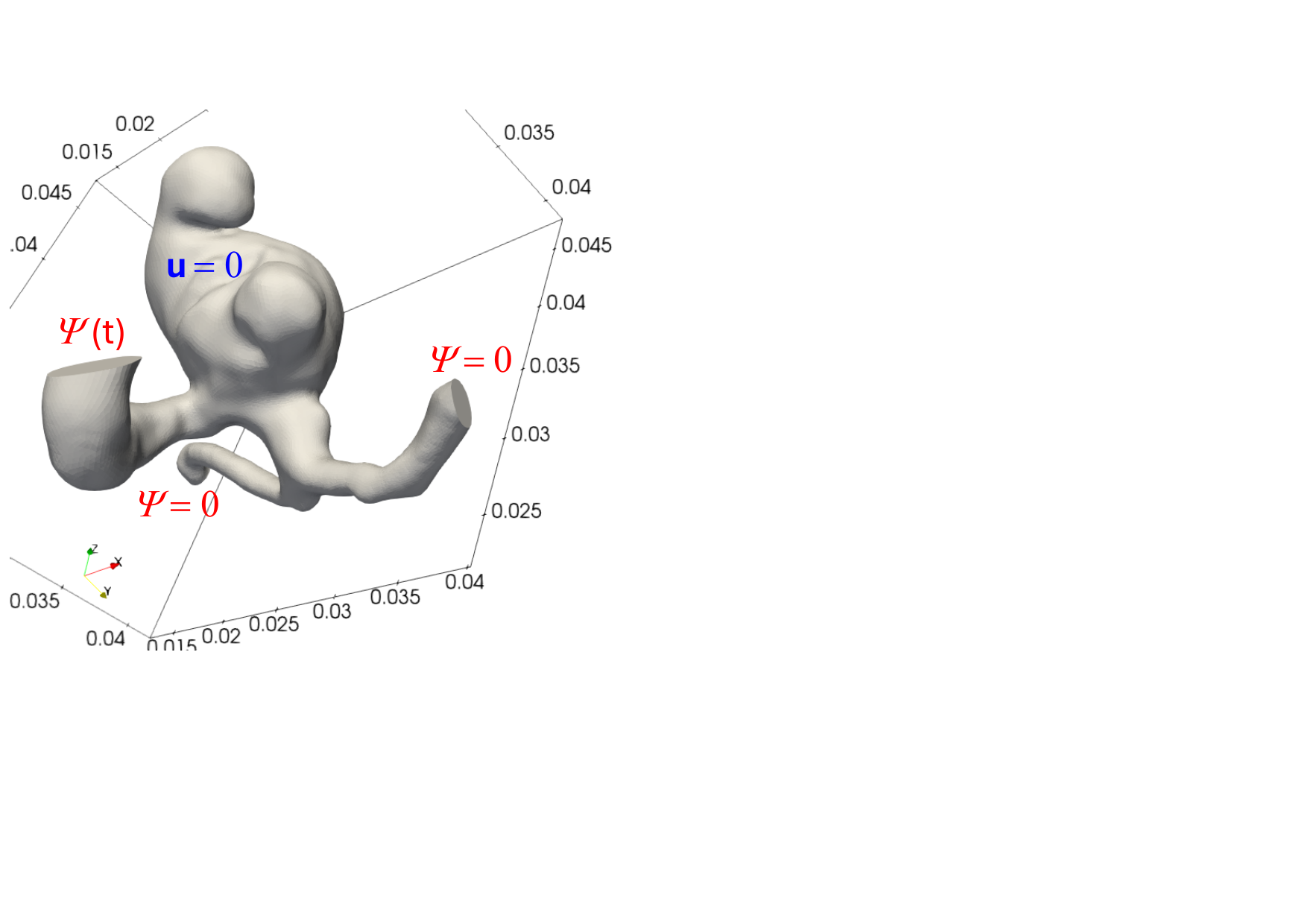}
	\includegraphics[width=0.32\linewidth]{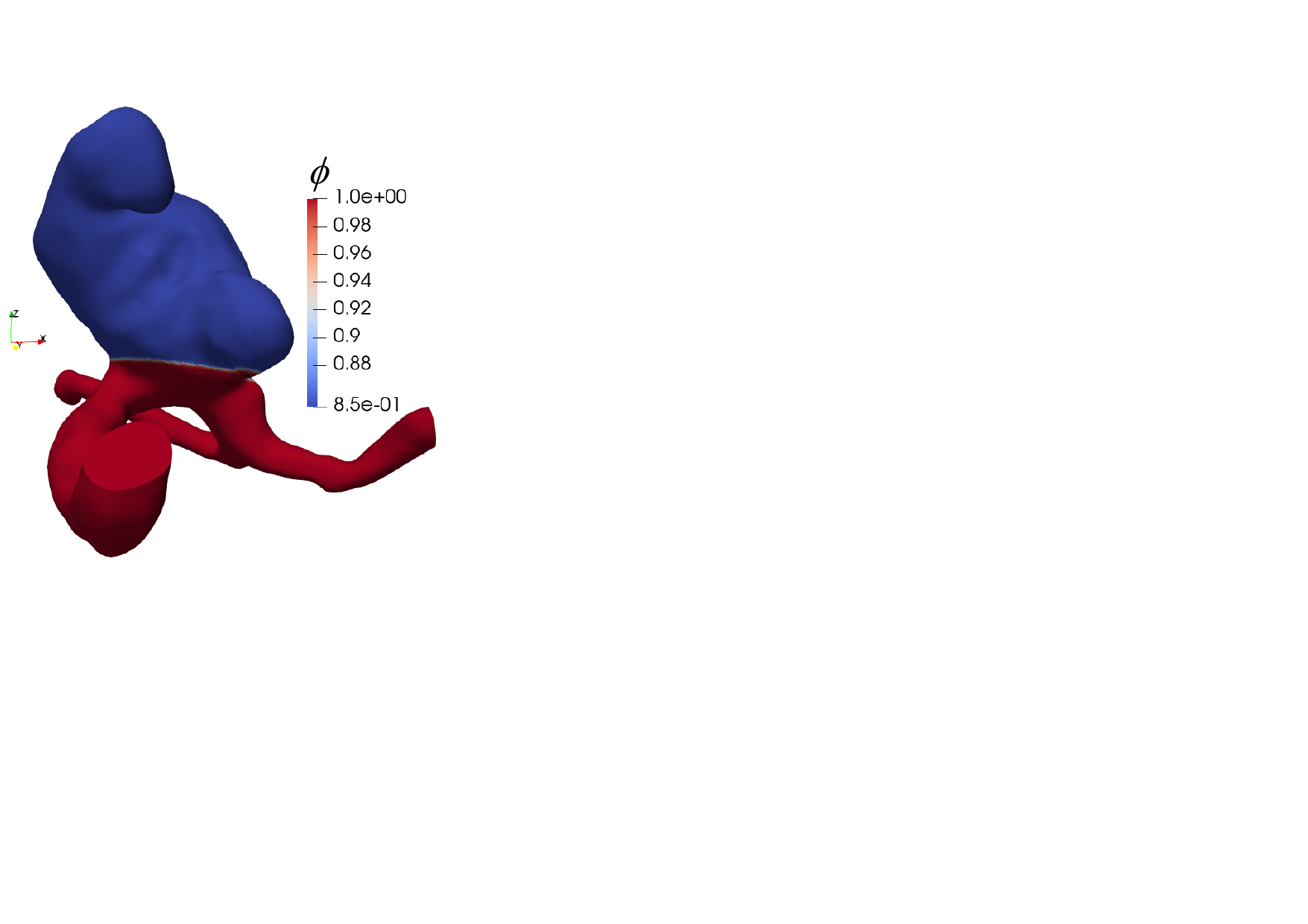}
	\includegraphics[width=0.27\linewidth]{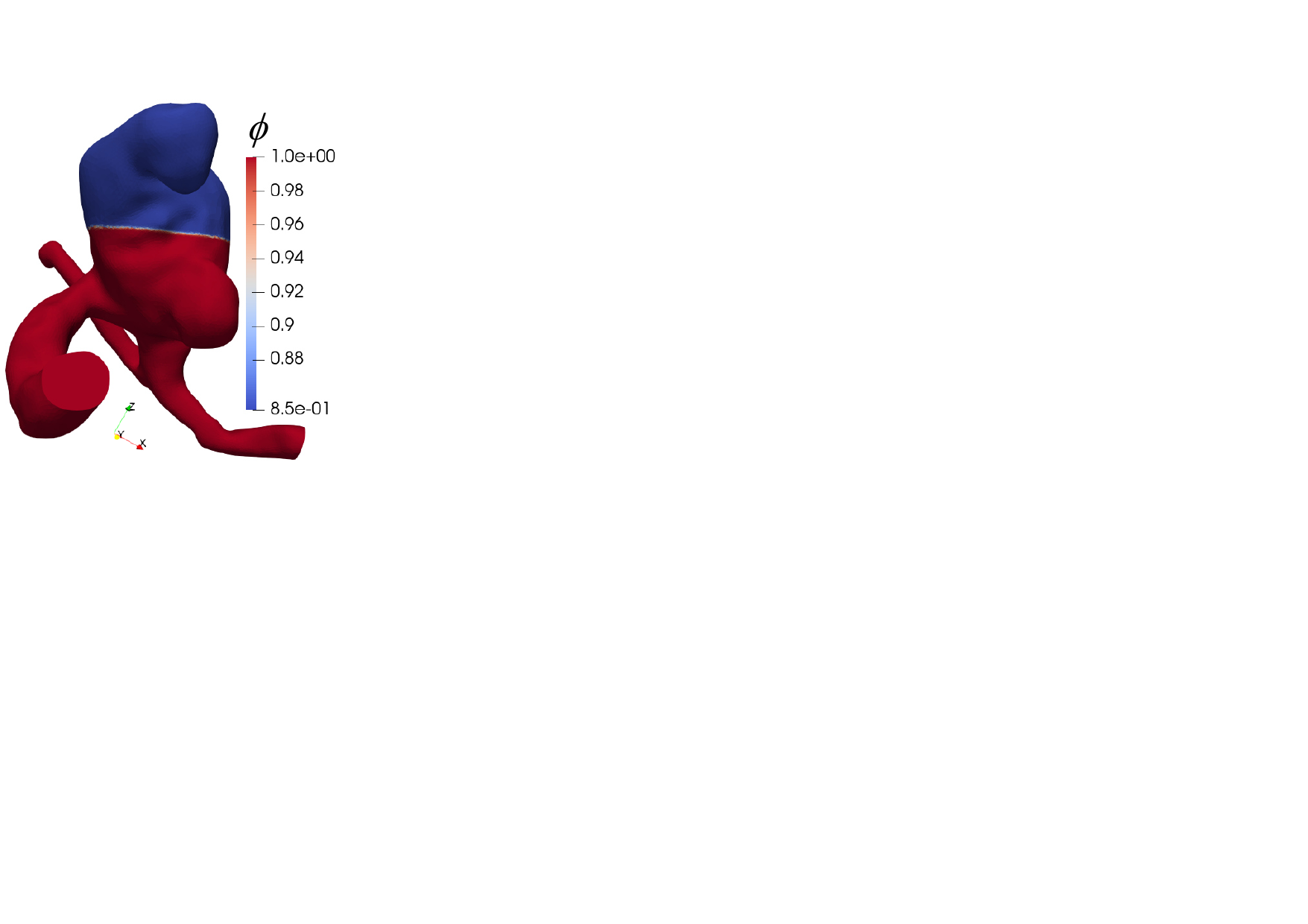}
	\caption{Test 4, computational domain and settings for the numerical runs (left), \(\phi\) values for \(F_r \simeq 1\) (center) and \(\phi\) values for \(F_r \simeq 0.61\) (right).}
	\label{test_4_sett}
\end{figure}

\begin{table} [H] 
	\caption{Test 4, parameters of the porous medium within the aneurysmatic sac.}
	\centering
	\begin{tabular}{c c c c c c c c c c}
		\(\phi_{max}\) & \(\phi_{min}\) & \(\theta_{\psi}\) & \(\theta_{\mathfrak{K}_{i,j}}\) & \(\mathfrak{K}^0_{1,1}\) & \(\mathfrak{K}^0_{2,2}\) & \(\mathfrak{K}^0_{3,3}\) & \(\mathfrak{K}^0_{1,2} = \mathfrak{K}^0_{2,1}\) & \(\mathfrak{K}^0_{1,3} = \mathfrak{K}^0_{3,1}\) & \(\mathfrak{K}^0_{2,3} = \mathfrak{K}^0_{3,2}\) \\
		\hline
		1	& 0.85 & 12000 & 12000 & 670 & 670 & 67000 & 0 & 0 & 0 \\	
		\hline		
	\end{tabular}
	\label{test4_bulk_pm_values}
\end{table}

We consider three scenarios, depending on the filling ratio of the aneurysmatic sac \(F_r\), \ie the ratio between the volume occupied by the porous medium and the total volume of the sac. The first two cases are shown in \cref{test_4_sett} (center and right) corresponding to \(F_r \simeq 1\) and \(F_r \simeq 0.61 \), respectively, while in the third scenario no porous medium fills the sac, \ie \(F_r = 0\). The flow is generated by an unsteady periodic pressure difference \(\Delta \Psi(t) = 1\e{-03}\sin(2 \pi  t)\) between the ends of the parent vessels, as shown in \cref{test_4_sett} (left). We set zero velocity (no slip BCs) on the lateral vessel walls (\cref{test_4_sett}). The ICs are zero velocity and pressure. The porous medium in the sac is homogeneous and anisotropic. The porosity $\phi$ and the inverse permeability $\mathfrak{K}$ are computed as in \eqref{perm_bulk} with parameters given in \cref{test4_bulk_pm_values}. The permeability coefficient along the \(z\) direction is two orders of magnitude smaller than along the other two directions.  \par

We compute the hemodynamic forces on the internal wall of the ICA and on the porous medium filling the sac. The total forces on the ICA wall is computed as
\begin{equation*}
	\mathbf{F}_w=\mathbf{F}_{w,\Psi}+\mathbf{F}_{w,\nu} = \sum_{i=1}^{N_{fb}} \left(\mathbf{f}_{w,\Psi,i}+\mathbf{f}_{w,\nu,i}\right),
\end{equation*} 
where the summation is over the triangles discretizing the ICA wall and \(\mathbf{f}_{w,\Psi,i}\) and \(\mathbf{f}_{w,\nu,i}\) are the pressure and viscous forces, respectively, acting on the \(i\)-th boundary triangle, \(1 \le i \le N_{fb}\), computed as
\begin{equation*}
	\mathbf{f}_{w,\Psi,i} = \phi_i \Psi_i \mathbf{n}_i |e_i|, \qquad f^{x\left(y,z\right)}_{w,\nu,i} = \nu \nabla u_{x\left(y,z\right)}^{n+1} \cdot \mathbf{n}_i |e_i|.
\end{equation*}
Here \(|e_i|\) is the surface of interface \(i\), \(\phi_i\) is the porosity computed in the center of mass of the interface,  \(\Psi_i\) is the intrinsic kinematic pressure computed in the center of mass of \(i\) according to the \(P_1\) pressure distribution within the tetrahedron sharing the boundary interface, and \(\mathbf{n}_i\) is the outward unit normal vector to the interface. \(f^{x\left(y,z\right)}_{w,\nu,i}\) are the components of the viscous force \(\mathbf{f}_{w,\nu,i}\), where the velocity components gradients are obtained according to the linear variation of the velocity components within the tetrahedron sharing the boundary face. \par

The force on the porous medium within the aneurysmatic sac is computed as 
\begin{equation*}
	\mathbf{F}_{pm} = \sum_{i=1}^{N_{T,pm}} \mathbf{f}_{pm,i} = \sum_{i=1}^{N_{T,pm}} \sum_{j=1}^{d+1} \phi_j \mathbf{L}_j \mathbf{u}_j w_j |E_i|,
\end{equation*}
where the summation on $i$ is over the tetrahedra used to discretize the porous domain and \(\mathbf{f}_{pm,i}\) is the force on the \(i\)-th element, computed by the Gaussian numerical integration. The summation on $j$ is over the \(d+1\) Gaussian integration points within the \(i\)-th simplex with volume \(|E_i|\), \(\mathbf{L}_j\)  and \(\mathbf{u}_j\) are the quantities computed at the $j$-th integration point, and \(w_j\) is the quadrature weight. \par

In \cref{test_4_1} we plot, for the case of \(F_r \simeq 1\), the pressure and velocity fields at the simulation times \(T_0 / 4 \) and \(3 T_0 / 4\) (where \(T_0\) is the oscillation period), corresponding to the maximum positive and negative pressure differences \(\Delta \Psi \). The maximum Reynolds number in the vessel, computed according to the maximum velocity vector magnitude \(|\mathbf{u}|\) and the diameter of the parent vessel, is \(Re \simeq 0.3\). The maximum Reynolds number in the porous region is 2-3 orders of magnitude smaller. The velocity vectors are consistent with the pressure difference assigned at the boundary.
\cref{test_4_2} shows the computed pressure and viscous forces acting on the ICA wall (left and middle columns) and the forces acting on the porous medium within the sac (right column). The magnitudes of \(\mathbf{F}_{w,\nu}\) and \(\mathbf{F}_{pm}\) are approximately 2 and 4 orders of magnitude smaller than the pressure forces \(\mathbf{F}_{w,\Psi}\), respectively. As expected, \(\mathbf{F}_{pm}\) is oriented according to the velocity within the porous medium, depending on the boundary assigned pressure differences. We further show in \cref{test_4_3} the differences in \(\mathbf{F}_{w,\Psi}\) computed in the case \(F_r \simeq 1\) with the cases \(F_r \simeq 0.61\) and \(F_r = 0\). As expected, the magnitude of \(\mathbf{F}_{w,\Psi}\) is smaller where the porous medium is in contact with the aneurysmatic wall. \par

\begin{figure}
	\centering
	\includegraphics[width=0.7\linewidth]{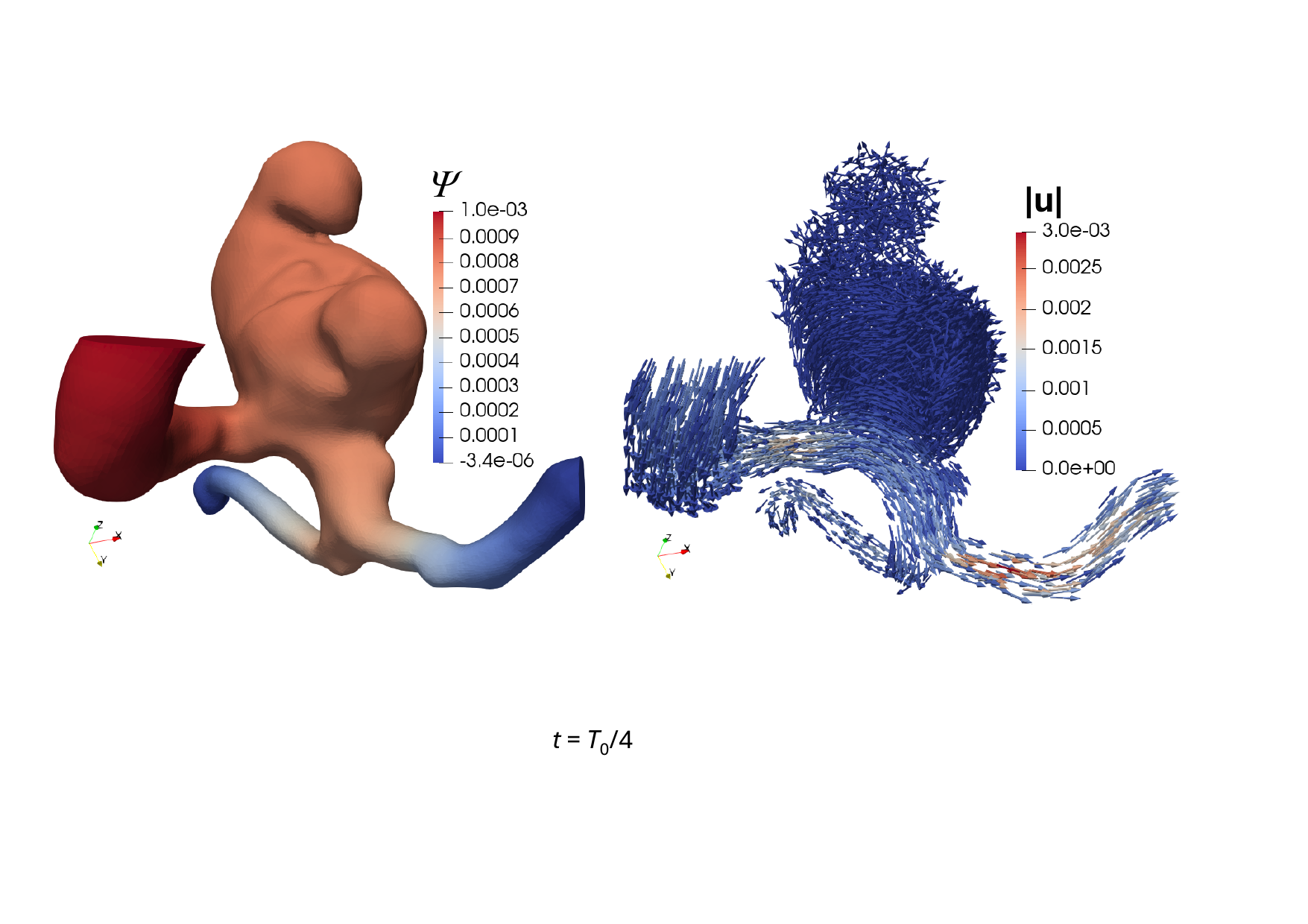}
	\includegraphics[width=0.7\linewidth]{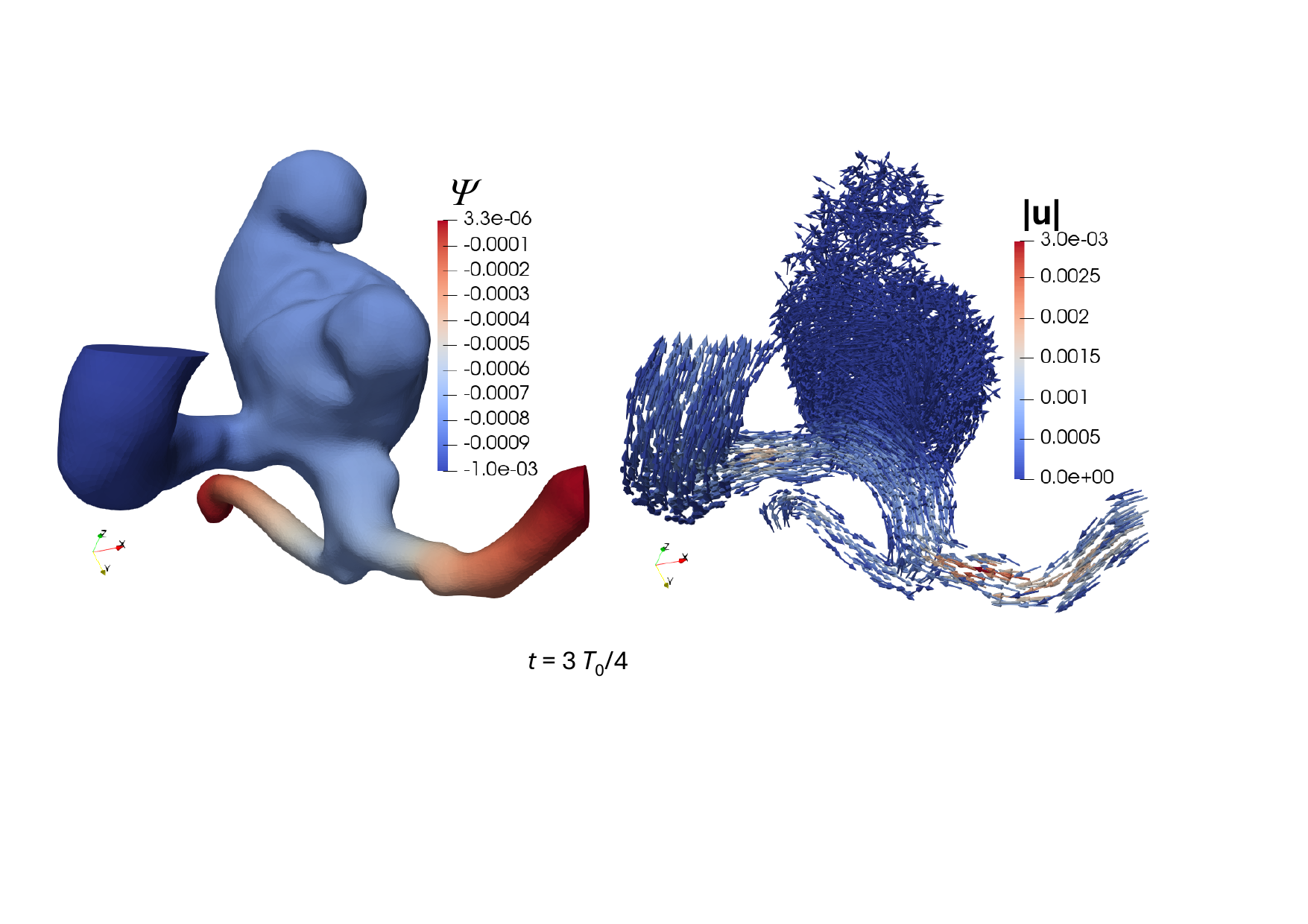}
	\caption{Test 4, computed results with \(F_r \simeq 1\). Left: pressure (\(\textnormal{m}^2 / \textnormal{s}^2\)), right: velocity (\(\textnormal{m} / \textnormal{s}\)). Top row: \(t=T_0 / 4\), bottom row: \(t=3  T_0 / 4\).}.
	\label{test_4_1}
\end{figure}

\begin{figure}
	\centering
	\includegraphics[width=0.8\textwidth]{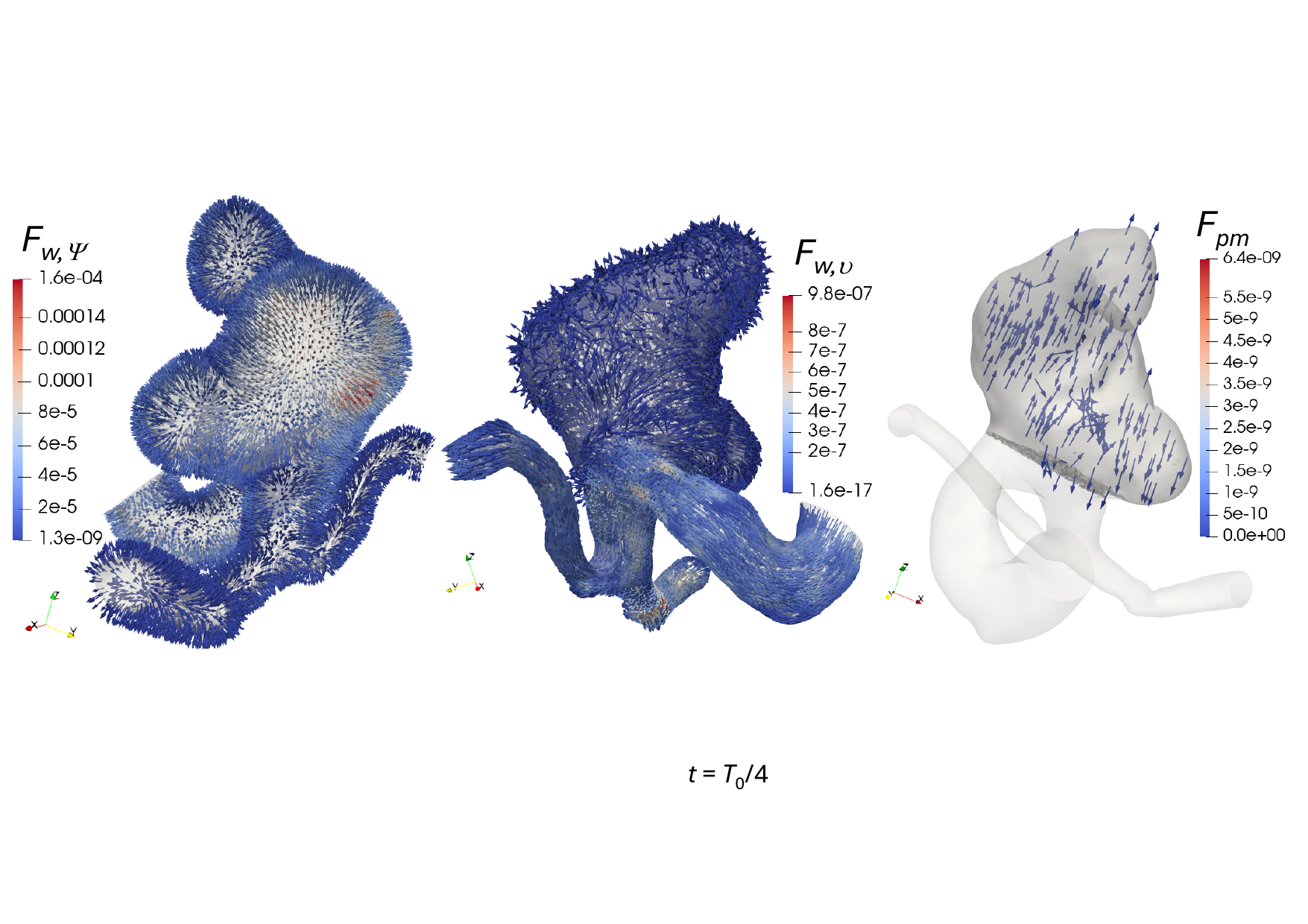}
	\includegraphics[width=0.8\textwidth]{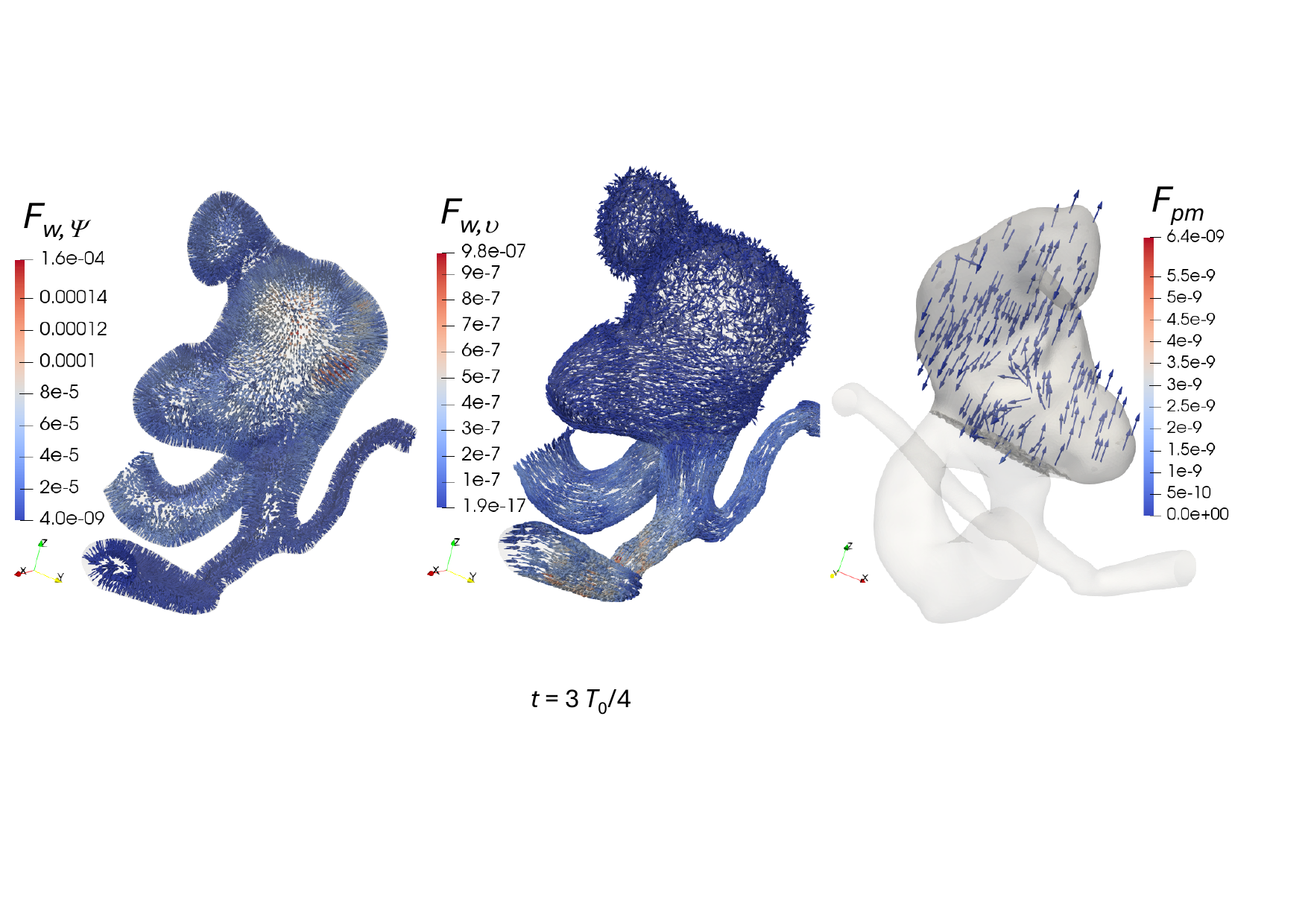}
	\caption{Test 4, computed results with \(F_r \simeq 1\). Left: \(\mathbf{F}_{w,\Psi}\), center: \(\mathbf{F}_{w,\nu}\), right:  \(\mathbf{F}_{pm}\). Top row: \(t=T_0 / 4\), bottom row: \(t=3  T_0 / 4\) (results in \(\textnormal{m}^4 / \textnormal{s}^2\)).}
	\label{test_4_2}
\end{figure}

\begin{figure}
	\centering
	\includegraphics[width=0.6\linewidth]{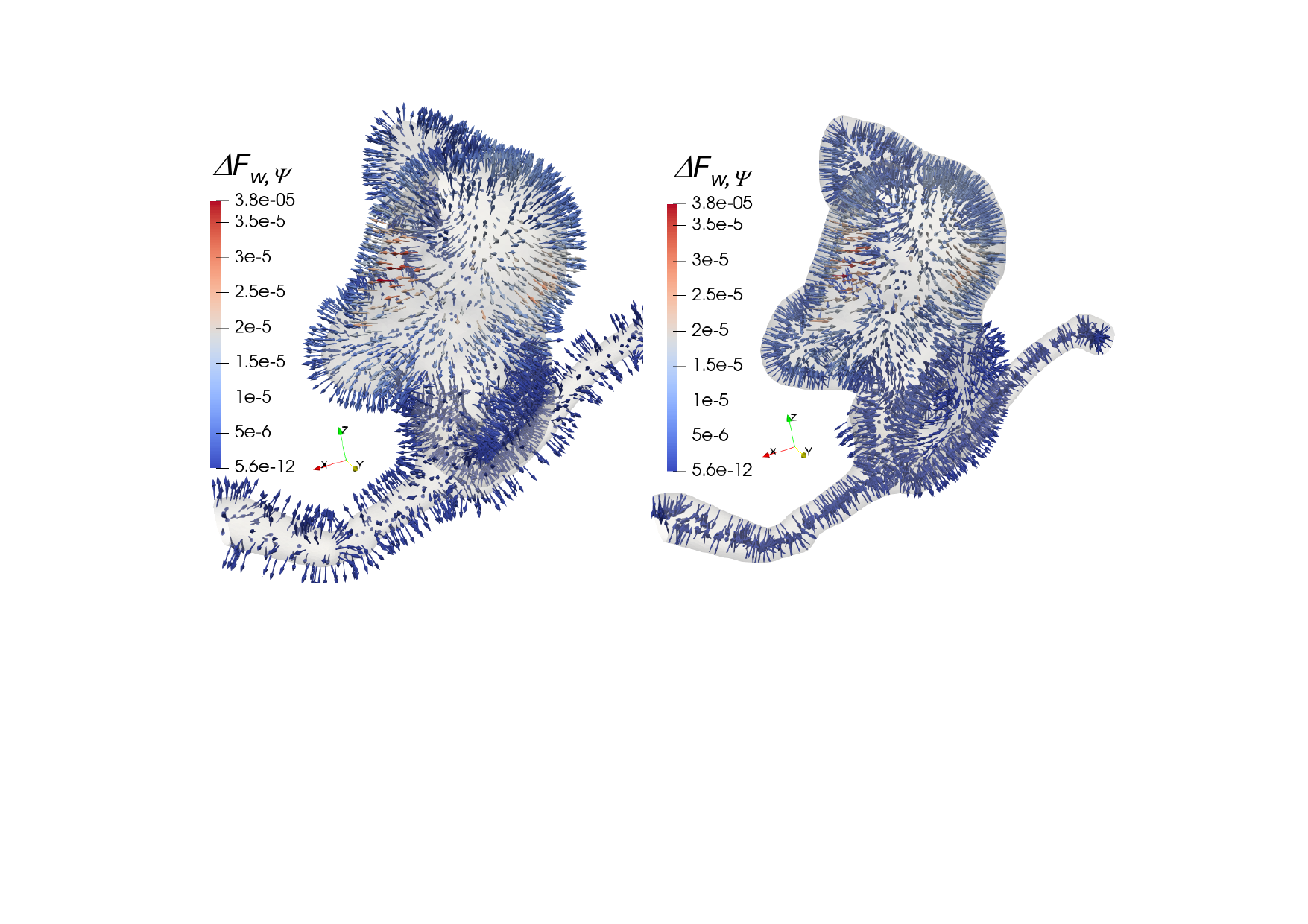}
	\includegraphics[width=0.6\linewidth]{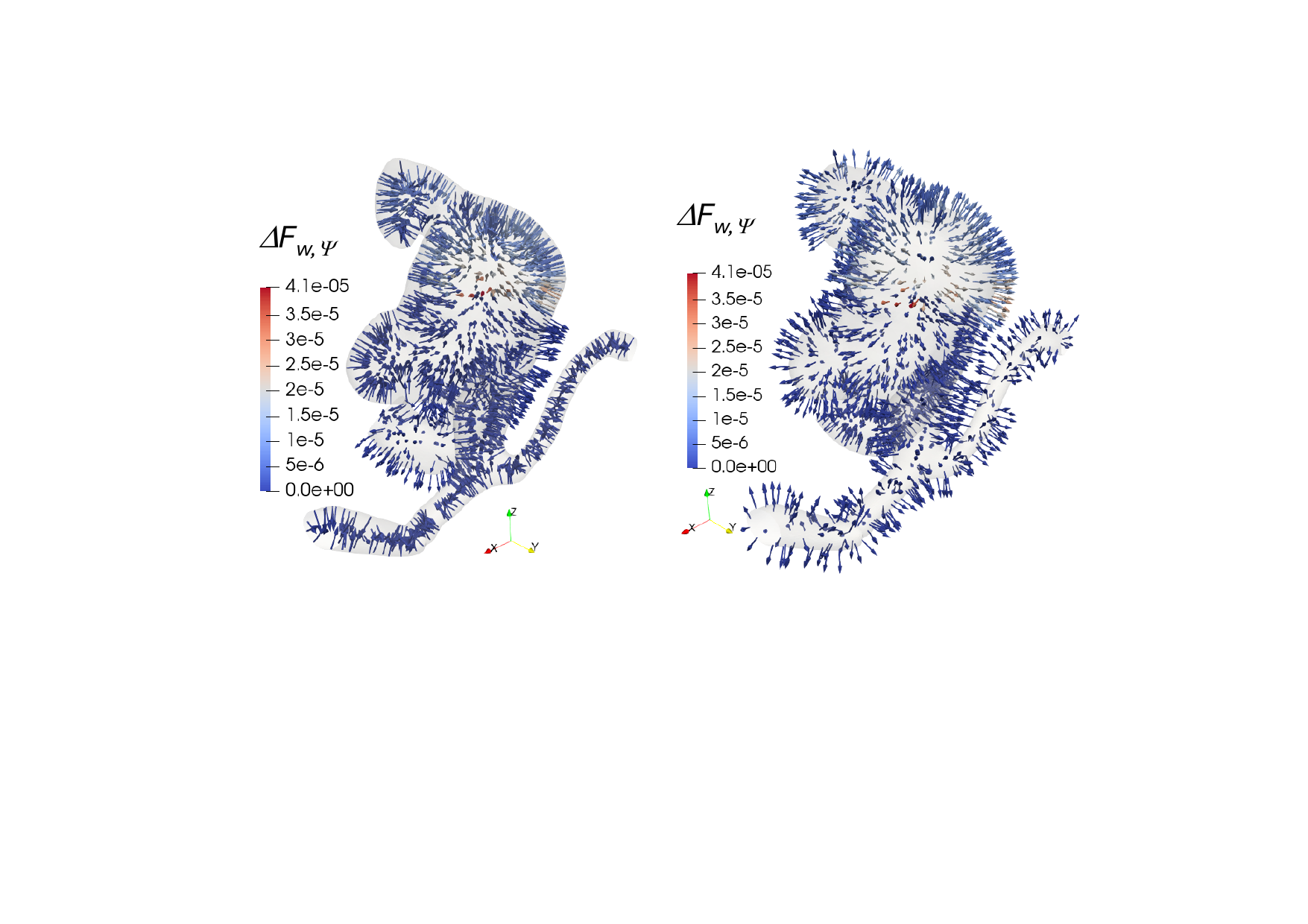}
	\caption{Test 4, computed difference in \(\mathbf{F}_{w,\Psi}\) between the cases \(F_r \simeq 1\) and \(F_r \simeq 0.61\) (left) and between the cases \(F_r \simeq 1\) and \(F_r =0\) (right). Top row: \(t=T_0 / 4\), bottom row: \(t=3  T_0 / 4\) (results in \(\textnormal{m}^4 / \textnormal{s}^2\)).}
	\label{test_4_3}
\end{figure}

\section{Conclusions} \label{conclusion}
We have presented \(H(\text{div})\)-conforming projection-based mixed finite element methods for the solution of the unsteady Brinkman problem. At each time step the methods require the solution of a mixed stress-velocity predictor problem for the viscous effects and a mixed velocity-pressure projection problem to account for the fluid incompressibility. The methods produce a pointwise divergence-free velocity and are robust in both the Stokes and the Darcy regimes. Unconditional stability of the fully discretized algorithm and first order in time accuracy have been established. In the second part of the paper, we proposed a specific method based on the MFMFE methodology on simplicial grids in 2D and 3D. We use the \(RT_1\) mixed finite element pair along with an inexact numerical quadrature rule to obtain mass lumping and local elimination of the viscous stress and velocity in the prediction and projection problem, respectively. This results in symmetric and positive definite algebraic systems with only \(d+1\) unknowns per element, efficiently solved by the preconditioned conjugate gradient method.
The computed pressure and velocity are second order accurate in space. The numerical tests provided in this paper illustrate the efficiency and accuracy of the scheme for different benchmark problems, including challenging applications with highly irregular geometry and heterogeneous porous media with strong discontinuities of the porosity and permeability values. Furthermore, we show that the method is robust in both the Stokes and Darcy regimes and provides an efficient, accurate, and robust ODA alternative for modeling coupled free fluid and porous media flows. 

\section*{Acknowledgments}
This work was partially supported by the German Research Foundation (DFG), by funding Sonderforschungsbereich (SFB) 1313 (Project Number 327154368, Research Project A02), University
of Stuttgart, by funding SimTech via Germany’s Excellence Strategy (EXC 2075–390740016),
University of Stuttgart, and by NSF grant DMS-2410686.

\bibliographystyle{abbrv}
\bibliography{references.bib}
\end{document}